\documentclass[12pt]{amsart}
\topmargin0in
\textheight8.5in
\oddsidemargin0.2in
\evensidemargin0.2in
\textwidth6in
\advance\hoffset by -0.5 truecm
\usepackage{amsmath,amscd}
\usepackage{amssymb}
\usepackage{amsthm}
\usepackage{color}
\usepackage[normalem]{ulem}
\usepackage{cancel}

\newtheorem{Theorem}{Theorem}[section]

\newtheorem{Lemma}[Theorem]{Lemma}

\newtheorem{Proposition}[Theorem]{Proposition}

\newtheorem{Remark}[Theorem]{Remark}

\def\Z{{\mathbb Z}}

\def \R{{\mathbb R}}

\def \T{{\mathbb T}}

\def \S{{\mathbb S}}

\newcommand{\ep}{\varepsilon}
\newcommand{\N}{\mathbb{N}}

\newcommand{\set}[2]{\left\{{#1}\mid{#2}\right\}}

\title[On the existence of three closed magnetic geodesics]{On the existence of three closed magnetic geodesics for subcritical energies}

\author[A. Abbondandolo]{Alberto Abbondandolo}
\address{Ruhr Universit\"at Bochum,
Fakult\"at f\"ur Mathematik,
Geb\"aude NA 4/33,
D-44801 Bochum,
Germany}
\email{alberto.abbondandolo@rub.de}

\author[L. Macarini]{Leonardo Macarini}
\address{Universidade Federal do Rio de Janeiro, Instituto de Matem\'atica,
Cidade Universit\'aria, CEP 21941-909 - Rio de Janeiro - Brazil}
\email {leonardo@impa.br}

\author[G.P. Paternain]{Gabriel P. Paternain}
\address{ Department of Pure Mathematics and Mathematical Statistics,
University of Cambridge,
Cambridge CB3 0WB, UK}
\email {g.p.paternain@dpmms.cam.ac.uk}


\begin{document}

\begin{abstract} We consider exact magnetic flows on closed orientable surfaces. We show that for almost every energy $\kappa$ below Ma\~n\'e's critical value of the universal covering there are always at least three distinct closed magnetic geodesics with energy $\kappa$. If in addition the energy level is assumed to be non-degenerate we prove existence of infinitely many closed magnetic geodesics.

\end{abstract}

\maketitle


\renewcommand{\theenumi}{\roman{enumi}}
\renewcommand{\labelenumi}{(\theenumi)}

\section*{Introduction} In this paper we study the problem of existence of closed orbits with prescribed energy for exact magnetic flows on closed orientable surfaces.

Let $(M,g)$ be a closed oriented Riemannian surface and let $\theta$ be a smooth $1$-form on $M$. Set $f:=\star d\theta$, where $\star$ is the Hodge star operator of $g$. The {\it magnetic geodesics} $\gamma:\R\to M$ are the solutions to
the ordinary differential equation
\[\frac{D\dot{\gamma}}{dt}=f(\gamma)\,i\dot{\gamma},\]
where $D/dt$ is the covariant derivative and $i$ is the almost complex structure
naturally associated to the oriented Riemannian surface. Magnetic geodesics were introduced by V.I.~Arnold in \cite{arn61b}. Certainly, the solutions to this ODE preserve the kinetic energy $E(x,v):=|v|_{x}^2/2$.

The above ODE is the Euler-Lagrange equation of the Lagrangian $L:TM\to \R$ given by
\[L(x,v)=\frac{1}{2}|v|^2_{x}+\theta_{x}(v),
\]
which is fiberwise strictly convex and superlinear, and hence satisfies the standard hypotheses in Aubry-Mather theory. The corresponding Hamiltonian on $T^*M$ is given by 
\[
H(x,p)=\frac{1}{2}|p-\theta_{x}|_{x}^2.
\] 

We are interested in closed magnetic geodesics with prescribed energy $\kappa$, and hence it is useful to have a variational principle that picks up these orbits. This is easily achieved by considering the {\it free-period action functional}
\[
\S_{\kappa} (\gamma) := \int_0^T \bigl( L(\gamma(t),\dot{\gamma}(t)) + \kappa \bigr)\, dt = T \int_{\T}  \bigl( L(x(s),\dot{x}(s)/T) + \kappa \bigr)\, ds =: \S_{\kappa} (x,T),
\]
where $\gamma:[0,T]\rightarrow M$ is an absolutely continuous closed curve and $x(s):= \gamma(Ts)$ is its reparametrization on $\T:= \R/\Z$. This functional is smooth on the Hilbert manifold $\mathcal{M}:= H^1(\T,M)\times (0,+\infty)$, and its critical points correspond to periodic magnetic geodesics of energy $\kappa$ (as usual, $H^1(\T,M)$ denotes the Hilbert manifold of absolutely continuous closed loops $x:\T \rightarrow M$ whose derivative is square integrable). Elements of $\mathcal{M}$ are denoted indifferently as closed curves $\gamma:[0,T]\rightarrow M$ or as pairs $(x,T)$, as above.

In \cite{cmp04} the authors proved that every energy level $E^{-1}(\kappa)$ ($\kappa\geq 0$) contains a closed magnetic geodesic. The proof hinges on the consideration of certain critical energies known as {\it Ma\~n\'e's critical values}.
For our purposes there are two of these critical energies which are significant, and they are defined as follows:
\[
\begin{split}
c_u := \inf \{ \kappa\in \R \, | \, & \S_{\kappa}(\gamma)\geq 0 \mbox{ for every absolutely continuous} \\ & \mbox{contractible closed curve } \gamma\},
\end{split}
\]
\[
\begin{split}
c_0 := \inf \{ \kappa\in \R \,  | \, &\S_{\kappa}(\gamma)\geq 0 \mbox{ for every absolutely continuous} \\ & \mbox{null-homologous closed curve } \gamma\}.
\end{split}
\]
Clearly $c_u\leq c_0$ and equality holds for $M=S^2$ and $M=\T^2$. However for surfaces of higher genus the inequality could be strict \cite{pp97}. If $\kappa>c_u$ it is possible to show that ${\S_{\kappa}}$ satisfies the Palais-Smale condition and it is bounded from below on each connected component of the free loop space \cite{cipp00}. Thus one is able to derive the same existence results as in the case of the geodesic flow of a Finsler metric. In particular every non-trivial free homotopy class contains a closed magnetic geodesic with energy $\kappa>c_u$.  Actually, for $\kappa>c_0$ the flow on $E^{-1}(\kappa)$ can be reparametrized to a Finsler geodesic flow. In particular, when $M=S^2$ the flow for energies above $c_u=c_0$ has always at least two closed magnetic geodesics by \cite{bl10}. Katok's example from \cite{kat73} shows that in this case there may be only two closed magnetic geodesics.

If $\kappa$ belongs to the interval $(c_u,c_0]$, then there are infinitely many closed magnetic geodesics of energy $\kappa$. Indeed, this interval is non-empty only if $M$ has genus greater than 1, and in this case $\mathcal{M}$ has infinitely many connected components with the property that no element of one of these is the iteration of an element of another one. In this case, we can find infinitely many closed magnetic geodesics with energy $\kappa>c_u$ by minimizing $\S_{\kappa}$ on each of these connected components.

In this paper we are concerned with the more difficult range of subcritical energies in the interval $(0,c_u)$. What makes this range of energies harder is that there are basically no tools to tackle it: the free-period action functional is no longer bounded from below and it may not satisfy the Palais-Smale condition. One is tempted then to deploy the Symplectic Topology arsenal, but unfortunately, as it was shown in \cite{cmp04}, the energy levels below $c_0$ are never of contact type at least for $M\neq \T^2$.

The idea used in \cite{cmp04} to produce a closed magnetic geodesic goes back to I. Taimanov \cite{tai92b,tai92c,tai92} (who proved similar results but with different methods) and consists in considering just simple closed curves to make the action functional bounded from below. Technically the space of simple closed curves is not the best to work with but by considering
integral 2-currents with suitable multiplicity one can use the compactness and regularity results from Geometric Measure Theory. In the end one obtains for $\kappa<c_0$ a closed magnetic geodesic $\alpha_{\kappa}$ which has {\it negative} ${\S_{\kappa}}$-action. This orbit has also the property of being a local $C^1$-minimizer. To obtain this orbit one may need to pass to a finite cover
(see Section \ref{sec:prelim}) but this is not much of a problem.

The other result that is available in the subcritical range is due to G. Contreras \cite{con06}. He proved that for almost every $\kappa\in (0,c_u)$ there is a closed contractible orbit $\gamma_{\kappa}$ with energy $\kappa$ and {\it positive} ${\S_{\kappa}}$-action (this holds in any dimension and for any Tonelli Lagrangian).
The fact that $\gamma_{\kappa}$ and $\alpha_{\kappa}$ have actions with different signs implies they must be geometrically distinct (i.e. one is not an iterate of the other) and hence we deduce that for almost every $\kappa\in (0,c_u)$
there are at least {\it two} closed magnetic geodesics with energy $\kappa$.

The main purpose of the present paper is to upgrade this to almost everywhere existence of at least {\it three} closed magnetic geodesics. Moreover, assuming in addition that the energy level is non-degenerate we shall prove the existence of infinitely many closed orbits with energy $\kappa\in (0,c_u)$.

\medskip

\noindent{\bf Theorem.} {\it Let $(M,g)$ be a closed oriented Riemannian surface and let $\theta$ be a smooth 1-form.  Then there exists a full measure set $K\subset (0,c_u)$ such that for every $\kappa\in K$
 there are at least three closed magnetic geodesics with energy $\kappa$.   
 Moreover, if for $\kappa\in K$ the energy level $E^{-1}(\kappa)$ is non-degenerate, then there are infinitely many closed magnetic geodesics with energy $\kappa$.  }

\medskip

Let us clarify the meaning of ``non-degenerate energy level". A closed orbit  $\gamma=(x,T)$ of energy $\kappa$ is said to be transversally non-degenerate if the algebraic multiplicity of the eigenvalue 1 of $d\varphi_T(\gamma(0),\dot\gamma(0))$ is exactly two, where $\varphi_t:TM\to TM$ denotes the Lagrangian flow of $L$. This is equivalent to the fact that the linearized Poincar\'e map associated to a transverse section to the orbit $(\gamma,\dot\gamma)$ in $E^{-1}(\kappa)$ does not have the eigenvalue 1, and is also equivalent to the fact that
the second differential of $\S_{\kappa}$ restricts to a non-degenerate form on the normal bundle of 
\[
\T \cdot (x,T) := \set{(x(\sigma+\cdot),T)}{\sigma\in \T}.
\]
Transversal non-degeneracy implies that there is a smooth family $\{(x_{\kappa+\sigma},T(\kappa+\sigma))\}_{\sigma\in(-\varepsilon,\varepsilon)}$ of critical points of $\S_{\kappa+\sigma}$ (an orbit cylinder)  such that $(x_{\kappa},T(\kappa)) = (x,T)$ (see \cite[Theorem 8.2.2]{am78} or \cite[Proposition 4.2]{hz94}). 
The energy level $E^{-1}(\kappa)$ is said to be non-degenerate if every closed orbit on it is transversally non-degenerate.

Non-degeneracy is $C^r$-generic for every $1\leq r \leq \infty$: for every $\kappa>0$ there exists a subset of the space of exact 2-forms $d\theta$ which is residual in the $C^r$ topology for which $E^{-1}(\kappa)$ is non-degenerate (see \cite[Theorem 1.2]{mir06}). 

Thanks to results of J.A.G.~Miranda's, our main theorem implies that if $\kappa$ belongs to the full-measure set $K\subset (0,c_u)$ which is associated to the magnetic form $d\theta$, then there is an exact 2-form $d\tilde{\theta}$ with $\|d\tilde\theta - d\theta\|_{C^1}<\epsilon$ such that the exact magnetic flow associated to $d\tilde{\theta}$ on the energy level $\kappa$ has positive topological entropy: if $E^{-1}(\kappa)$ is non-degenerate, then this follows from the existence of infinitely many closed orbits, thanks to \cite[Theorem 1.2]{mir07}; if $E^{-1}(\kappa)$ is not non-degenerate, then it contains a non-hyperbolic closed orbit, and the existence of $d\tilde{\theta}$ follows from \cite[Theorem 1.1]{mir07} (when $E^{-1}(\kappa)$ has at least one non-hyperbolic closed orbit, $d\tilde{\theta}$ can be chosen to be close to $d\theta$ in the $C^{\infty}$ topology).

Let us briefly describe the idea of the proof of the Theorem. Since we are working on an orientable surface the iterates $\alpha^n_{\kappa}$ of $\alpha_{\kappa}$ are also local minimizers.  Using that  the action of $\alpha_{\kappa}$ is negative and ideas in \cite{ban80} we construct for all sufficiently large values of $n$ an appropriate negative minimax value and via Struwe's monotonocity argument \cite{str90} applied to the free-period action functional we prove the existence for a.e.\ $\kappa\in (0,c_u)$ of a closed orbit $\beta_{\kappa,n}$ with energy $\kappa$, negative ${\S_{\kappa}}$-action and which is not a strict local minimizer. These facts ensure that $\beta_{\kappa,n}$ is not an iterate of $\alpha_{\kappa}$ 
or $\gamma_{\kappa}$. Struwe's monotonicity argument is used to bypass the lack of the Palais-Smale condition as in \cite{con06}.
In the non-degenerate situation we show that the mean index of the minimax orbits $\beta_{\kappa,n}$ is actually positive which excludes the possibility that the curves $\beta_{\kappa,n}$ for $n\in \N$ are all iterates of finitely many simple periodic orbits.  To prove positivity of the mean index we use the following fact which is proved in Section \ref{Sec:meanindex} and has independent interest:  if a transversally non-degenerate closed orbit has $T'(\kappa)\geq 0$, then it must have positive mean index (or equivalently there are conjugate points along the orbit). The tools used in the proof of this fact allow us to prove that the mountain pass closed orbits are either non-hyperbolic or odd hyperbolic (see Proposition \ref{elliptic-odd_hyp} and Remark \ref{stability}). 

In \cite[Theorem 4]{ban80} Bangert shows that a Riemannian metric on $S^2$ which possesses a ``waist" must have infinitely many closed geodesics.  A waist is a closed geodesic which is a local minimum of
the energy functional. The situation that we have  for subcritical energies is similar: 
as we mentioned above $\alpha_{\kappa}$ is a local minimizer of the free period action functional so at this stage it seems reasonable to conjecture that for almost every $\kappa<c_u$ there are infinitely many closed magnetic geodesics with energy $\kappa$, regardless of any non-degeneracy assumption. 
We hope to address this question in a subsequent paper.

\medskip

\paragraph{\sc Acknowledgments.} 
We are grateful to M.~Mazzucchelli for communicating us the elegant proof of Lemma \ref{minimizers}.

The present work is part of the first author's activities within CAST, a Research Network Program of the European Science Foundation. The second author was partially supported by CNPq, Brazil.
This research has been supported partially by EU Marie-Curie IRSES Brazilian-European partnership in Dynamical Systems (FP7-PEOPLE-2012-IRSES 318999 BREUDS).

\section{Mean index of the free-period action functional}\label{Sec:meanindex}

The contents of this section hold for arbitrary Tonelli Lagrangians on configuration spaces of arbitrary dimension.

Let $M$ be a smooth orientable manifold and $H: T^*M \to \R$ a Hamiltonian dual to a Tonelli Lagrangian $L: TM \to \R$ (that is, $L$ is fiberwise superlinear and satisfies $d_{vv} L(q,v)>0$) via the Legendre transform $\mathfrak{L}:TM\rightarrow T^{*}M$. Consider the free period action functional
$$ \S_{\kappa}(x,T) = T\int_\T \bigl( L(x(s),\dot x(s)/T) + \kappa \bigr)\,ds, \qquad (x,T) \in C^{\infty}(\T,M) \times (0,+\infty). $$
If $(x,T)$ is a critical point of $\S_{\bar\kappa}$ and $\gamma(t)=x(t/T)$ is the corresponding $T$-periodic solution of the Euler-Lagrange equation associated to $L$, we define
\[
z(t) := \mathfrak{L}(\gamma(t),\dot{\gamma}(t)).
\]
Then $H(z(t)) \equiv \bar\kappa$, and $z$ is a $T$-periodic orbit of the Hamiltonian vector field $X_H$ on $T^*M$, which is defined by
\[
\imath_{X_H} \omega = - dH,
\]
where $\omega=dp\wedge dq$ is the standard symplectic form on $T^*M$. We denote by $\phi_t$ the flow of $X_H$ on $T^*M$.

The vector $X_H(z(0))$ is an eigenvector with eigenvalue 1 of the differential of the flow 
\[
d\phi_T(z(0)) : T_{z(0)} T^*M \rightarrow T_{z(0)} T^*M.
\]
Since the above linear map is symplectic, the eigenvalue 1 has algebraic multiplicity at least two. Let us assume that $(x,T)$ is {\em transversally non-degenerate}, meaning that the algebraic multiplicity of the eigenvalue 1 of $d\phi_T(z(0))$ is exactly two. In this case, the closed orbit $z$ admits a (unique) orbit cylinder: there exists a smooth map 
\[
(\kappa,t) \mapsto z_{\kappa}(t), \qquad (\kappa,t) \in (\bar\kappa-\epsilon,\bar\kappa+\epsilon) \times \R,
\]
which is unique up to time-shifts, such that each $z_{\kappa}$ is a closed orbit of $X_H$ with period $T(\kappa)$ on $H^{-1}(\kappa)$, and $z_{\bar\kappa}=z$, $T(\bar\kappa)=T$ (see e.g. \cite[Theorem 8.2.2]{am78} or \cite[Proposition 4.2]{hz94}).

The presence of this orbit cylinder allows to decompose $d\phi_T(z(0))$ in the following way (see also \cite[pages 104--105]{mp11}). Set
\[
\zeta := - \frac{\partial}{\partial \kappa} z_{\kappa}(0) \Big|_{\kappa=\bar\kappa}.
\]
By differentiating the identity $H(z_{\kappa}(0))=\kappa$ with respect to $\kappa$ at $\bar\kappa$, we obtain
\[
-dH(z(0))[\zeta]=1.
\]
Therefore
\[
\omega(X_H(z(0)),\zeta) = - dH(z(0))[\zeta] = 1,
\]
so $X_H(z(0))$ and $\zeta$ form a symplectic basis of the plane $V$ spanned by these two vectors. By differentiating the equation
\[
\phi_{T(\kappa)} (z_{\kappa}(0)) = z_{\kappa}(0)
\]
with respect to $\kappa$ at $\bar\kappa$, we obtain the identity
\[
T'(\bar\kappa) X_H(z(0)) - d \phi_{T}(z(0)) [\zeta] = - \zeta,
\]
which shows that the plane $V$ is $d \phi_{T}(z(0))$-invariant and that the restriction of $d \phi_{T}(z(0))$ to $V$ is represented by the matrix
\[
\left( \begin{array}{cc} 1 & T'(\bar\kappa) \\ 0 & 1 \end{array} \right)
\]
with respect to the symplectic basis $X_H(z(0)),\zeta$. Then the $\omega$-orthogonal complement $W$ of $V$ in $T_{z(0)} T^*M$ is also $d \phi_{T}(z(0))$-invariant. Being $\omega$-orthogonal to $X_H$, $W$ is contained in the kernel of $dH(z(0))$, so it is the tangent space of a Poincar\'e section, that is a hypersurface in $H^{-1}(\bar\kappa)$ which is transverse to the flow of $X_H$ at $z(0)$. If we denote by $P:W \rightarrow W$ the differential at $z(0)$ of the Poincar\'e return map associated to such a section, we see that $d \phi_{T}(z(0))$ has the form
\begin{equation}
\label{diffe}
d \phi_{T}(z(0)) = \left( \begin{array}{ccc} 1 & T'(\bar\kappa) & 0 \\ 0 & 1  & 0 \\ 0 & 0 & P \end{array} \right)
\end{equation}
with respect to the symplectic decomposition $T_{z(0)} T^*M = V \oplus W$. Therefore, the transversal non-degeneracy condition is equivalent to the fact that the linearized Poincar\'e map $P$ does not have the eigenvalue 1. From this, it can also be shown that $(x,T)$ is transversally non-degenerate if and only if the kernel of the second Gateaux differential of the free period action functional $\S_{\bar\kappa}$ at $(x,T)$ is one-dimensional (hence spanned by the vector $(\dot{x},0)$).

Denote by $i(x,T)$ the index of the second differential of $\S_{\bar\kappa}$ at $(x,T)$. We will also consider the fixed period action functional $\S_{\bar\kappa}^T (x) = \S_{\bar\kappa}(x,T)$. The index of $d^2 \S^T_{\bar\kappa}$ at a critical point $x$ will be denoted by $i_T(x)$. Clearly,
\begin{equation}
\label{conf}
0 \leq i(x,T) - i_T(x) \leq 1.
\end{equation}
When $(x,T)$ is {\em strongly non-degenerate}, meaning that the eigenvalue 1 of $d \phi_{T}(z(0))$ has algebraic multiplicity two and geometric multiplicity one, or equivalently that $P$ does not have the eigenvalue 1 and $T'(\bar\kappa)\neq 0$, the Morse indices of the free period action and of the fixed period action are related by the formula
\begin{equation}
\label{mp}
i(x,T)=\left\{ \begin{array}{cl} i_T(x) + 1  & \mbox{if } T'(\bar\kappa) > 0, \\ i_T(x) & \mbox{it } T'(\bar\kappa)<0. \end{array} \right.
\end{equation}
which is proved in \cite[Theorem 1.3]{mp11}. When $(x,T)$ is transversally non-degenerate but $T'(\bar\kappa)=0$, the second differential $d^2 \S^T_{\bar\kappa}$ of the fixed period action functional has a two-dimensional kernel. The lower-semicontinuity of the Morse index allows to generalize the above formula to the transversally non-degenerate case:

\begin{Proposition} 
\label{full}
Let $(x,T)$ be a transversally non-degenerate critical point of $\S_{\bar\kappa}$, and let 
\[
z_k : \R/T(\kappa)\Z \rightarrow M, \qquad \kappa\in (\bar\kappa-\epsilon, \bar\kappa+\epsilon),
\]
be the corresponding orbit cylinder. Then the Morse indices of the free period and of the fixed period action functional are related by the formula
\[
i(x,T) = \left\{ \begin{array}{cl} i_T(x) + 1  & \mbox{if } T'(\bar\kappa) \geq 0, \\ i_T(x) & \mbox{if } T'(\bar\kappa)<0. \end{array} \right.
\]
\end{Proposition}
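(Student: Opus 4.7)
The case $T'(\bar\kappa)\neq 0$ is already the content of \cite[Theorem 1.3]{mp11}, namely formula (2.3); the new case is $T'(\bar\kappa)=0$, where I need to show $i(x,T)=i_T(x)+1$. My plan is to perform a direct congruence reduction of the second variation $d^2\S_{\bar\kappa}(x,T)$, based on an explicit description of $\ker d^2\S^T_{\bar\kappa}(x)$ coming from the orbit cylinder.

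First I would write the free-period Hessian in block form
\[
d^2\S_{\bar\kappa}\bigl[(\xi,s),(\xi,s)\bigr] \;=\; A(\xi,\xi)+2sB(\xi)+cs^2,
\]
with $A=d^2\S^T_{\bar\kappa}(x)$, $B(\xi)=\partial_T \partial_x \S_{\bar\kappa}(x,T)[\xi]$, and $c=\partial_T^2\S_{\bar\kappa}(x,T)$. A short computation at the critical point, using that the energy is constant along the orbit, gives $c=\tfrac{1}{T^2}\int_0^T d_{vv}^2L(\gamma,\dot\gamma)[\dot\gamma,\dot\gamma]\,dt>0$ by fibrewise strict convexity of $L$. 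Time-translation invariance yields $A\dot x=0$ and $B(\dot x)=0$. Differentiating the cylinder identity $d\S_\kappa(x_\kappa,T(\kappa))=0$ at $\bar\kappa$, using $\partial_\kappa\S_\kappa\equiv T$, one obtains
\[
d^2\S_{\bar\kappa}(x,T)\bigl[(\eta,T'(\bar\kappa))\bigr] = -(0,1), \qquad \eta := \tfrac{d}{d\kappa}x_\kappa\big|_{\bar\kappa}.
\]
Substituting $T'(\bar\kappa)=0$ gives $A\eta=0$ and $B(\eta)=-1$. Since $\ker A$ coincides with the space of periodic Jacobi fields along $\gamma$, whose dimension equals the geometric multiplicity of $1$ as an eigenvalue of $d\phi_T(z(0))$, and since (2.1) forces this geometric multiplicity to be exactly $2$ when $T'(\bar\kappa)=0$, I conclude $\ker A=\mathrm{span}\{\dot x,\eta\}$ (and $B(\eta)=-1\neq 0=B(\dot x)$ ensures independence).

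Now decompose $\xi=c_1\dot x+c_2\eta+\xi_\perp$ with $\xi_\perp\in(\ker A)^\perp$. Using $A|_{\ker A}=0$, $B(\dot x)=0$, $B(\eta)=-1$, the Hessian simplifies to
\[
Q \;=\; A(\xi_\perp,\xi_\perp) - 2sc_2 + 2sB(\xi_\perp) + cs^2.
\]
Apply the linear congruence $c_2':= c_2 - B(\xi_\perp) - cs/2$, leaving the other coordinates fixed; this is a bounded shear of the Hilbert tangent space, so it preserves Morse index by Sylvester's law of inertia. A direct check yields
\[
Q \;=\; A(\xi_\perp,\xi_\perp) - 2sc_2'.
\]
This splits into three independent blocks: the $c_1$-direction is a kernel, contributing the expected one-dimensional nullity of the transversally non-degenerate critical point; the $\xi_\perp$-block is $A|_{(\ker A)^\perp}$, contributing Morse index $i(A)=i_T(x)$; and the $(c_2',s)$-block has matrix $\left(\begin{smallmatrix}0&-1\\-1&0\end{smallmatrix}\right)$ with eigenvalues $\pm 1$, contributing Morse index $1$. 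Summing, $i(x,T)=i_T(x)+1$, as required.

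The substantive point is the identification $\dim\ker A=2$ when $T'(\bar\kappa)=0$: this rests on the classical fact that, for the fixed-period action Hessian of a Tonelli Lagrangian, the nullity agrees with the geometric multiplicity of the eigenvalue $1$ of the monodromy $d\phi_T(z(0))$, so that $\ker A$ is spanned by initial values of periodic Jacobi fields along $\gamma$. Everything else is elementary linear algebra, and the mechanism is transparent: the extra kernel direction $\eta$ of the fixed-period Hessian, present precisely when $T'(\bar\kappa)=0$, couples to the free $T$-direction via $B(\eta)=-1$ to form a $2\times 2$ block of signature $(1,1)$, which is exactly what produces the additional negative direction for the free-period Hessian.
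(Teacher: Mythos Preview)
Your argument is correct, and it takes a genuinely different route from the paper's. The paper deals with the case $T'(\bar\kappa)=0$ by a perturbation: it replaces $H$ by $H_\lambda=h_\lambda\circ H$ for a suitable one-parameter family of diffeomorphisms $h_\lambda$ of $\R$, so that the same $(x,T)$ remains critical for the new free-period functional but the period function of the perturbed orbit cylinder satisfies $T_\lambda'(\bar\kappa)=-\lambda T(\bar\kappa)\neq 0$ for $\lambda\neq 0$. Since transversal non-degeneracy persists, the free-period index $i_\lambda(x,T)$ is constant in $\lambda$, while the Merry--Paternain formula gives $i_T^\lambda(x)=i(x,T)-1$ for $\lambda<0$ and $i_T^\lambda(x)=i(x,T)$ for $\lambda>0$; lower semi-continuity of the fixed-period index at $\lambda=0$ then forces $i_T(x)=i(x,T)-1$. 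Your approach instead opens up the Hessian directly, identifies the two-dimensional kernel $\mathrm{span}\{\dot x,\eta\}$ of $A=d^2\S^T_{\bar\kappa}(x)$ via the orbit-cylinder relation, and performs an explicit shear congruence that splits off a $2\times 2$ block of signature $(1,1)$ in the $(\eta,T)$-directions. This is more elementary and exposes \emph{why} the extra negative direction appears precisely when $T'(\bar\kappa)=0$: the new kernel vector $\eta$ of the fixed-period Hessian couples to the free period via $B(\eta)=-1$. The paper's route, by contrast, is quicker to write down once the strongly non-degenerate formula is in hand, and avoids any infinite-dimensional congruence bookkeeping. One small remark: you state and verify $c=\partial_T^2\S_{\bar\kappa}(x,T)>0$, but your congruence absorbs $c$ into $c_2'$ and the final block $\left(\begin{smallmatrix}0&-1\\-1&0\end{smallmatrix}\right)$ does not depend on it, so that computation, while correct, is not actually needed.
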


\begin{proof}
We just have to deal with the case $T'(\bar\kappa)=0$.
We modify the Hamiltonian $H$ as follows
\[
H_{\lambda} = h_{\lambda} \circ H, \quad \lambda\in \R,
\]
where $h_{\lambda}$ is a diffeomorphism of $\R$ which is $C^2$-close to the identity for $|\lambda|$ small, and such that
\[
h_{\lambda}(\kappa) = \kappa+ \frac{1}{2} \lambda (\kappa - \bar\kappa)^2,
\]
for $\kappa$ close to $\bar\kappa$. In particular,
\[
h_{\lambda}(\bar\kappa)= \bar\kappa, \qquad h_{\lambda}'(\bar\kappa)=(h_{\lambda}^{-1})'(\bar\kappa)=1, \qquad  (h_{\lambda}^{-1})''(\bar\kappa)= - \lambda.
\]
The Hamiltonian $H_{\lambda}$ is Tonelli for $|\lambda|$ small, and we denote by $L_{\lambda}$ its dual Tonelli Lagrangian.
The flow of the vector field $X_{H_{\lambda}}= h_{\lambda}'(H) X_H$ is a time reparametrization of the flow of $X_H$: if $z$ is an orbit of $X_H$ of energy $H=\kappa$, 
\[
t\mapsto z(h_{\lambda}'(\kappa) t) 
\]
is an orbit of $X_{H_{\lambda}}$ of energy $H_{\lambda}=h_{\lambda}(\kappa)$. Since $h_{\lambda}(\bar\kappa) = \bar\kappa$ and $h_{\lambda}'(\bar\kappa)=1$, $z_{\bar\kappa}$ is a closed orbit of $H_{\lambda}$ of energy $H_{\lambda}=\bar\kappa$, and
\[
z_{\kappa}^{\lambda} (t) := z_{h_{\lambda}^{-1}(\kappa)} \bigl( h_{\lambda}'(h_{\lambda}^{-1}(\kappa) )t \bigr) 
\]
is the orbit cylinder of $z_{\bar\kappa}$ with respect to $X_{H_{\lambda}}$. The closed orbit $z_{\kappa}^{\lambda}$ has energy $H_{\lambda}=\kappa$ and period
\[
T_{\lambda}(\kappa) = \frac{T(\kappa)}{h_{\lambda}'(h_{\lambda}^{-1}(\kappa))} = (h_{\lambda}^{-1})'(\kappa) T(\kappa).
\]
Since $T'(\bar\kappa)=0$, we find
\begin{equation}
\label{segno}
T_{\lambda}'(\bar\kappa) = (h_{\lambda}^{-1})''(\bar\kappa) T(\bar\kappa) = - \lambda T(\bar\kappa).
\end{equation}
The point $(x,T)$ is critical for the free-period action functionals which is associated to the Lagrangian $L_{\lambda}$ and the energy $\bar\kappa$, for every $|\lambda|$ small. Being transversally non-degenerate with respect to the Lagrangian $L_0=L$, $(x,T)$ remains transversally non-degenerate for the Lagrangian $L_{\lambda}$ for $|\lambda|$ small enough, and the Morse index $i_{\lambda}(x,T)$ with respect to the free period action functional associated to $L_{\lambda}$ is constant:
\[
i_{\lambda}(x,T) = i(x,T) \qquad \mbox{for $|\lambda|$ small.}
\]
By (\ref{segno}), $(x,T)$ is strongly non-degenerate for $L_{\lambda}$ when $\lambda\neq 0$, and by (\ref{mp}) the index $i_T^{\lambda}(x)$ of $x$ with respect to the fixed period-$T$ action functional associated to $L_{\lambda}$ is
\[
i_T^{\lambda}(x) = \left\{  \begin{array}{cl} i(x,T) - 1  & \mbox{if } \lambda < 0, \\ i(x,T) & \mbox{if } \lambda>0. \end{array} \right.
\]
Since the second differential at $x$ of the fixed period-$T$ action functional associated to $L_{\lambda}$ varies continuously with $\lambda$, the lower semi-continuity of the Morse index implies that
\[
i_T(x) = i_T^{0}(x) = i(x,T) - 1.
\]
\end{proof}

The  mean index with respect to the free period action functional is defined as
$$ \widehat \imath(x,T) := \lim_{n\to\infty} \frac{1}{n} i(x^n,nT), $$
where $x^n(s) := x(ns)$ for every $s\in \T$. By  (\ref{conf}), the above limit exists, as it coincides with the classical mean index with respect to the fixed period action functional
\[
\widehat{\imath}_T (x) := \lim_{n\to\infty} \frac{1}{n} i_{nT}(x^n),
\]
and it is positive if and only if there are conjugate points along the orbit.

\begin{Theorem}
\label{index}
Let $\gamma=(x,T)$ be a transversally non-degenerate critical point of $\S_{\bar\kappa}$, and let $z$ be the corresponding $T$-periodic orbit of $X_H$. Assume that the period $T(\kappa)$ in the orbit cylinder which passes through $z$ satisfies $T'(\bar\kappa)\geq 0$. Then $\widehat \imath(x,T) > 0$. 
As a consequence, if a transversally non-degenerate critical point $(x,T)$ satisfies $i(x,T) \geq 1$,  then $\widehat \imath(x,T) > 0$. 
\end{Theorem}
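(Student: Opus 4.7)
By the sandwich inequality (\ref{conf}) applied to iterates and divided by $n$, we obtain
\[
\widehat\imath(x,T) \;=\; \widehat\imath_T(x).
\]
Since positivity of $\widehat\imath_T(x)$ is equivalent to the existence of a conjugate point along $z$ (as already noted in the paragraph preceding the statement), the theorem reduces to producing such a conjugate point under each of the two hypotheses.

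First I would reduce to the strict case $T'(\bar\kappa) > 0$ via the reparametrization $H_\lambda = h_\lambda \circ H$ of Proposition~\ref{full}: since $h_\lambda'(\bar\kappa) = 1$, the vector field $X_{H_\lambda} = h_\lambda'(H) X_H$ agrees with $X_H$ on $H^{-1}(\bar\kappa)$, so the flow on that level set, and hence the linearized Poincar\'e map $P$ and the mean Morse index $\widehat\imath_T(x)$, are unchanged; but
\[
T_\lambda'(\bar\kappa) \;=\; T'(\bar\kappa) - \lambda T(\bar\kappa),
\]
so taking $\lambda<0$ small converts $T'(\bar\kappa)=0$ into $T_\lambda'(\bar\kappa)>0$, while $H_\lambda$ remains Tonelli. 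Thus we may assume $T'(\bar\kappa) > 0$ strictly.

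Under this assumption the key identity is
\[
\frac{d^2}{d\kappa^2}\,\S_{\bar\kappa}\bigl(x_\kappa^n, nT(\kappa)\bigr)\Big|_{\bar\kappa} \;=\; -nT'(\bar\kappa),
\]
obtained by differentiating the envelope identity $\frac{d}{d\kappa}\S_\kappa(x_\kappa,T(\kappa))=T(\kappa)$ together with the relation $\S_{\bar\kappa} = \S_\kappa - (\kappa-\bar\kappa)T$. This produces, for each $n$, a free-period variation of $(x^n, nT)$ with arbitrarily negative second variation as $n\to\infty$. To upgrade this to a \emph{fixed-period} variation of $x^n$ with negative second variation I would use the cylinder Jacobi field $J(t) = \partial_\kappa z_\kappa(t)|_{\bar\kappa}$, which satisfies $J(nT)-J(0) = -nT'(\bar\kappa)\,X_H$; correcting $J$ by $(t/nT)\cdot nT'(\bar\kappa)\,X_H$ yields a genuine $nT$-periodic variation $\eta_n$. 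A direct second-variation calculation, using Tonelli fibre convexity $L_{vv}>0$ to control the tangential correction, should make the boundary contribution from $J$ dominate for $n$ large, yielding $i_{nT}(x^n) \geq 1$ and hence a conjugate point along $z$.

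For the consequence I would argue by cases via Proposition~\ref{full}: either $T'(\bar\kappa) \geq 0$, in which case the main statement applies directly; or $T'(\bar\kappa) < 0$ and $i_T(x) = i(x,T) \geq 1$. In the second case the fixed-period Hessian at $x$ already has a negative direction, and a standard Bott-type iteration argument for Tonelli closed orbits forces $\widehat\imath_T(x) > 0$.

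The main obstacle is the second-variation computation for the periodicity-corrected Jacobi field $\eta_n$: one must carefully separate the boundary contribution coming from $J$ (which scales as $-nT'(\bar\kappa)$) from the bulk contribution coming from the linear-in-$t$ tangential correction (which scales as $n T'(\bar\kappa)^2$). Showing that the negative boundary term wins requires invoking the positive-twist property of the linearized Tonelli flow along $z$ in a quantitative way; this is the true analytic content of the section.
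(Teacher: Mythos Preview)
Your approach is genuinely different from the paper's, and the central computation is missing. The paper does not construct a variation at all: it invokes Bott's index function $\Lambda:S^1\to\N$, observes that the splitting numbers $S^\pm(1)$ are determined by the $2\times 2$ shear block $\left(\begin{smallmatrix}1 & T'(\bar\kappa)\\0&1\end{smallmatrix}\right)$ (because $P$ does not have the eigenvalue $1$), and cites the fact that $T'(\bar\kappa)\ge 0$ forces $S^\pm(1)=1$. This gives $\Lambda(e^{\pm i\epsilon})\ge 1$ for small $\epsilon>0$, hence $\widehat\imath_T(x)=(2\pi)^{-1}\int_{S^1}\Lambda>0$. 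The consequence is then immediate: if $T'(\bar\kappa)<0$ one has $i_T(x)=i(x,T)\ge 1$, i.e.\ $\Lambda(1)\ge 1$, and again the integral is positive.

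Your proposal has a gap precisely at the point you flag as ``the main obstacle.'' The identity $\frac{d^2}{d\kappa^2}\S_{\bar\kappa}(x_\kappa^n,nT(\kappa))\big|_{\bar\kappa}=-nT'(\bar\kappa)$ is correct, but it produces a negative direction for the \emph{free}-period Hessian; this is exactly the extra direction recorded by $i(x,T)=i_T(x)+1$ in Proposition~\ref{full}, and it does not by itself yield a negative direction for the fixed-period Hessian. Your periodicity correction $\eta_n=J+(tT'/T)X_H$ is the natural candidate, but the second variation $Q^{nT}(\eta_n,\eta_n)$ is never computed, and the heuristic scalings you quote (boundary $\sim -nT'$, bulk $\sim nT'^2$) are both linear in $n$, so letting $n\to\infty$ cannot separate them; nor can you make $T'$ small after the reduction, since the reparametrization is only used to move from $T'=0$ to some $T'>0$. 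A secondary issue: your justification that $\widehat\imath_T(x)$ is invariant under $H\mapsto h_\lambda\circ H$ is incomplete. The full linearized flow along $z$ \emph{does} change (by the rank-one term $\lambda\,dH\otimes X_H$), and indeed $i_T^\lambda(x)$ jumps---this is how Proposition~\ref{full} is proved. The mean index happens to be invariant because $\Lambda$ changes only at the single point $1\in S^1$, but seeing this cleanly already uses Bott's function, which defeats the purpose of avoiding it.
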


A prototypical example is the pendulum, with phase space $T^* S^1$: the contractible periodic orbits (i.e. the orbits whose energy is below that of the separatrix) form an orbit cylinder for which $T'> 0$, and indeed these orbits have positive mean index; the non-contractible periodic orbits form an orbit cylinder for which $T'<0$, and they all have zero mean index. 

\begin{proof}
Since $M$ is orientable, the vector bundle $\gamma^*(TM)$ can be trivialized, and the linearization of the Euler-Lagrange equation along $x$ produces a linear second order Lagrangian system in $\R^n$. 
Following \cite{bot56} one can associate to such a linear system an index function $\Lambda: S^1 \to \N$ such that $\Lambda(1) = i_T(x)$. We will make use of the following properties of $\Lambda$. For the proofs we refer to \cite{bot56}, \cite{lon02}, or \cite[Section 2.2]{maz11}.
\begin{enumerate}
\item The discontinuity points of $\Lambda$ are contained in $S^1 \cap \sigma(d\phi_T(z(0)))$, where $\sigma(\cdot)$ indicates the spectrum of a linear map.
\item The splitting numbers $S^\pm(z) := \lim_{\ep\to 0^\pm} \Lambda(e^{i\ep}z) - \Lambda(z)$ are non-negative and depend only on the restriction of $d\phi_T(z(0))$ to the generalised eigenspace associated to the eigenvalue $z$.
\item $i_{nT}(x^n) = \sum_{z^n=1} \Lambda(z)$.
\end{enumerate}
It follows from (iii) that
$$ \widehat \imath(x,T) =\widehat \imath_{T}(x)= \frac{1}{2\pi}\int_{S^1} \Lambda(z)\,dz. $$
Thus, we conclude from the properties above that $\widehat \imath(x,T)=0$ if and only if $i_{nT}(x^n)=0$ for every $n$. 

By property (ii), by the expression (\ref{diffe}) for $d\Phi_T(z(0))$, and by the assumption that $P$ does not have the eigenvalue 1, the splitting numbers $S^\pm(1)$ are determined by the matrix 
\[
\left( \begin{array}{cc} 1 & T'(\bar\kappa) \\ 0 & 1 \end{array} \right).
\]
It turns out that the condition $T^\prime(\bar\kappa)\geq 0$ ensures that $S^+(1)=S^-(1)=1$, see \cite[Examples I and II, page 181]{bot56} and \cite[page 198]{lon02}. Therefore, by properties (i) and (ii) we deduce that $\Lambda(e^{\pm i\ep}) > 0$ if $\ep>0$ is sufficiently small and consequently $\widehat \imath(x,T) > 0$.

Finally, assume that $i(x,T)\geq 1$. If $T'(\bar\kappa)<0$ then the orbit is strongly non-degenerate and (\ref{mp}) implies that
\[
i_T(x) = i(x,T) \geq 1,
\]
so $\widehat{\imath}(x,T)>0$.  If $T'(\bar\kappa)\geq 0$ then $\widehat{\imath}(x,T)>0$ by the above statement.
\end{proof}

The tools in the proof of the previous theorem can also be used to prove the following proposition. Let us recall that a periodic orbit is {\it hyperbolic} if every eigenvalue of its linearized Poincar\'e map $P$ has modulus different from one. We say that a periodic orbit is {\it odd hyperbolic} if it is hyperbolic and the number of eigenvalues (counted with algebraic multiplicity) in the interval $(-1,0)$ is odd. 

\begin{Proposition}
\label{elliptic-odd_hyp}
Let $\gamma=(x,T)$ be a critical point of $\S_{\bar\kappa}$ which is either transversally degenerate or has an odd Morse index $i(x,T)$. Then $\gamma$ is either non-hyperbolic or odd hyperbolic.
\end{Proposition}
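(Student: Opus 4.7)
The plan is to dispatch the transversally degenerate case immediately, then to argue by contradiction in the odd-Morse-index case using the Bott function $\Lambda$ from the proof of Theorem \ref{index}.

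If $\gamma$ is transversally degenerate, then as observed in the discussion following (\ref{diffe}), this is equivalent to $1\in\sigma(P)$, so $\gamma$ is non-hyperbolic and there is nothing to check. Assume now that $\gamma$ is transversally non-degenerate with $i(x,T)$ odd, and suppose for contradiction that $P$ is hyperbolic with $k_{-}:=\#\bigl(\sigma(P)\cap(-1,0)\bigr)$ even. Hyperbolicity forces the only eigenvalue of $d\phi_{T}(z(0))$ on $S^{1}$ to be $z=1$ (from the $V$-block in (\ref{diffe})), so by property (i) in the proof of Theorem \ref{index}, $\Lambda$ is discontinuous only at $z=1$ and equals a constant $c$ on $S^{1}\setminus\{1\}$; property (iii) identifies $c=\widehat{\imath}(x,T)$. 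The splitting numbers at the $V$-block, read off from Bott's Examples I and II, give $S^{\pm}(1)=1$ when $T'(\bar\kappa)\geq 0$ and $S^{\pm}(1)=0$ when $T'(\bar\kappa)<0$; a short case check using Proposition \ref{full} then shows that $i(x,T)=c$ in both sign regimes.

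The remaining step, and the only non-formal one, is the parity identity $c\equiv k_{-}\pmod 2$. My plan is to deform $P$ within the open hyperbolic stratum of $Sp(2m)$ (with $2m=\dim W$) to a symplectic direct sum of canonical blocks $\left(\begin{smallmatrix}2 & 0\\ 0 & 1/2\end{smallmatrix}\right)$ and $\left(\begin{smallmatrix}-2 & 0\\ 0 & -1/2\end{smallmatrix}\right)$, one for each hyperbolic eigenvalue pair (complex quadruples split into two real pairs of common sign according to the sign of the real part, without eigenvalues crossing $S^{1}$). The mean index is invariant under such a deformation because no eigenvalue of the monodromy crosses the unit circle, and on the canonical form a direct check shows that each positive block contributes $0$ and each negative block contributes $1$ to the mean index, so $c\equiv k_{-}\pmod 2$. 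Since $k_{-}$ is assumed even, $c=i(x,T)$ is then even, contradicting our hypothesis.

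The main obstacle I anticipate is executing this last parity calculation rigorously within the paper's index-theoretic framework; an alternative is to evaluate $(-1)^{\Lambda(-1)}$ via the sign of $\det(I+P)$ and observe that each positive hyperbolic pair contributes a positive factor while each negative pair flips the sign, the rest being a formal unfolding of the material already developed in Section \ref{Sec:meanindex}.
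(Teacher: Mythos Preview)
Your reduction to the transversally non-degenerate case and the computation that the Bott function equals a constant $c=i(x,T)$ on $S^{1}\setminus\{1\}$ is correct and is exactly what the paper does (your two sign regimes for $T'(\bar\kappa)$ match the paper's cases (i) and (ii)).

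The gap is in your parity step. The quantity $c=\widehat\imath(x,T)$ is the mean index of the \emph{full} linearized flow $t\mapsto d\phi_{t}(z(0))$ in $Sp(2n)$, which carries the $V$-block $\bigl(\begin{smallmatrix}1 & T'(\bar\kappa)\\ 0 & 1\end{smallmatrix}\bigr)$ as well as $P$; it is not a function of $P$ alone. Your sentence ``deform $P$ within the hyperbolic stratum \dots\ on the canonical form each positive block contributes $0$ and each negative block contributes $1$ to the mean index'' silently assumes three things you have not justified: (a) that the Bott function, and hence $c$, splits additively along $V\oplus W$; (b) that the $V$-block contributes an \emph{even} integer (in fact its mean index is $0$, but this must be argued); and (c) that the parity of the resulting sum agrees with the parity computed in the vertical trivialization that defines the Morse index. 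Without (a)--(c), the sentence ``so $c\equiv k_{-}\pmod 2$'' is a non sequitur: you have computed something about $P$, not about $c$. Your alternative via $\det(I+P)$ runs into the same issue, since $\Lambda(-1)$ is the Bott index of the full system and its parity is governed by $\det(I+d\phi_T(z(0)))$, not by $\det(I+P)$ alone; the reduction to $P$ again requires handling the $V$-block and the trivialization.

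The paper fills exactly this gap. After computing $\Lambda$ (your $c$), it passes to Long's index with respect to a trivialization of $z^{*}(TT^{*}M)$ that \emph{respects} the invariant splitting $\mathcal V\oplus\mathcal W$ and sends $X_{H}$ and $\zeta$ to fixed vectors. Two facts are then invoked: the parity of Long's index is trivialization-independent (so it agrees with the parity of the Morse index), and Long's index is additive along this splitting. The $\mathcal V$-part is a symplectic shear whose Long index has the same parity for every iterate (indeed it equals $0$ when $T'<0$ and $-1$ when $T'\ge 0$, for all $n$). Since Bott's formula forces $i_{nT}(x^{n})$ to have different parities for odd and even $n$, the $\mathcal W$-part must as well, and a cited lemma (the classical parity criterion for odd hyperbolicity, e.g.\ \cite[Proposition 1.4.5]{abb01}) finishes the argument. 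Your deformation idea is morally the same computation on the $\mathcal W$-part, but the splitting and the parity-invariance under change of trivialization are the pieces you are missing.
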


\begin{proof}
If $\gamma$ is transversally degenerate then the linearized Poincar\'e map $P$ has the eigenvalue 1, so $\gamma$ is obviously non-hyperbolic. So assume that $\gamma$ is transversally non-degenerate and let $2k+1$, $k\in \N$, be its Morse index. Suppose that $\gamma$ is hyperbolic. We will show that it has to be odd hyperbolic. Since the only eigenvalue of $d\phi_T(z(0))$ in the unit circle is 1, Bott's index function $\Lambda$ is completely determined by the Morse index of $x$ (for the fixed period action functional) and the splitting number $S^+(1)=S^-(1)$. There are two possibilities:

\begin{enumerate}
\item $T^\prime(\bar\kappa) < 0$: in this case $i_T(x)=i(x,T)=2k+1$, by (\ref{mp}), and $S^+(1)=S^-(1)=0$ (see again \cite[Example II, page 181]{bot56}); hence Bott's index function is constantly equal to $2k+1$.
\item $T^\prime(\kappa) \geq 0$: in this case $i_T(x)=i(x,T)-1=2k$, by Proposition \ref{full}, and $S^+(1)=S^-(1)=1$ (see again \cite[Examples I and II, page 181]{bot56}); consequently $\Lambda(1)=2k$ and $\Lambda(z)=2k+1$ for every $z \in S^1\setminus\{1\}$.
\end{enumerate}

By Bott's formula
\[
i_{nT}(x^n) = \sum_{z^n=1} \Lambda(z),
\] 
we conclude that in both cases the indices of the odd and even iterates of $x$ have different parities (more precisely, $i_{nT}(x^n)=(2k+1)n$ in case (i) and $i_{nT}(x^n)=(2k+1)n-1$ in case (ii)).

Following \cite{lon99, lon02}, one can define the index of a symplectic path (starting at the identity) as the infimum of the Conley-Zehnder indices of a non-degenerate perturbation in the $C^0$-topology (see \cite{lon99, lon02} for details). In this way, one can define the index of a periodic orbit as the index of the symplectic path given by the linearized Hamiltonian flow using a symplectic trivialization of $TT^*M$ along the orbit. This index depends on the choice of the trivialization, but its parity does not. Moreover, it coincides with the Morse index for Tonelli Hamiltonians if one uses a {\it vertical} trivialization, that is, a trivialization of $TT^*M$ that sends the vertical distribution to a fixed Lagrangian subspace in $\R^{2n}$, see \cite[Theorem 7.3.4]{lon02} and \cite[Theorem 2.3.5]{maz11}.

Now, write $z^*(TT^*M)$ as a direct sum $\mathcal{V} \oplus \mathcal{W}$, where the subbundles $\mathcal{V}$ and $\mathcal{W}$ are obtained by applying the differential of the flow $\phi_t$ to $V$ and $W$. By construction, these subbundles are invariant by $d\phi_t$, symplectic and symplectic orthogonal. Fix a trivialization of $z^*(TT^*M)$ that sends $\mathcal{V}$ and $\mathcal{W}$ to fixed symplectic subspaces of $\R^{2n}$. Assume also that it sends $\zeta_t := - \frac{\partial}{\partial \kappa} z_{\kappa}(t) \Big|_{\kappa=\bar\kappa}$ and $X_H(z(t))$ to fixed vectors that do not depend on $t$. Notice that this trivialization does not need to be vertical, but, as mentioned above, it does not affect the parity of the index. Consequently, the parities of the indices of the odd and even iterates are different.

By properties of the index, the index of $z$ (with respect to this fixed trivialization) is given by the sum of the indices of the linearized flow restricted to $\mathcal{V}$ and $\mathcal{W}$. The linearized flow restricted to $\mathcal{V}$ is a symplectic shear and it is not hard to see that the parity of its index is the same for every iterate (see \cite[Theorem 8.1.4]{lon02}; in fact, one can actually show that the index is constantly equal to $0$ in case (i) and $-1$ in case (ii) for every iterate).  Thus, the parities of the indices of the even and odd iterates of the linearized flow restricted to $\mathcal{W}$ must be different. But it is well known that this property holds if and only if the number of eigenvalues (counted with algebraic multiplicity) of the linearized return map in the interval $(-1,0)$ is odd (see, for instance, \cite[Proposition 1.4.5]{abb01} or \cite[Lemma 3.2.4]{ust99}).
\end{proof}

\section{Existence of local minimizers}\label{sec:prelim}

In this section we recall some of the main results in \cite{cmp04} and we show how these results imply the existence of local minimizers of the functional $\S_{\kappa}$ on $H^1(\T,M) \times (0,+\infty)$, for every positive energy $\kappa$ below the Ma\~n\'e critical value $c_0$.

Let $M$ be a closed oriented surface and let $p:M_{0}\to M$ denote the abelian cover.
If $\kappa<c_0$, there exists an absolutely continuous closed curve $\gamma:[0,T]\to M_{0}$
with negative ${\S_{\kappa}}$-action (here the action $\S_{\kappa}$ of a closed curve in $M_0$ is associated to the lift of $L$ to $M_0$). By the procedure explained in
\cite[Lemma 3.3]{cmp04} there exists a {\it simple} piecewise smooth closed curve $\beta$  in $M_0$ with constant speed $\sqrt{2k}$ and
negative ${\S_{\kappa}}$-action.  The curve $p\circ\beta$ is homologous to zero
and has negative ${\S_{\kappa}}$-action, but it may not be simple. To remedy this
we pass to a finite cover $N\to M$ as follows. Since the group of covering transformations of $p:M_0\to M$ is abelian, it is residually finite (i.e. the intersection of all its normal subgroups of finite index is trivial). Hence given a compact set $K\subset M_0$ there exist coverings $p_{N}:N\to M$ and $\pi_{N}:M_0\to N$ such that $p_N\circ\pi_N=p$, $p_N$ is a finite covering
and $\pi_{N}|_{K}$ is injective.  Since the image of $\beta$ is a compact set, we can find coverings as above with $\pi_{N}\circ\beta$ a simple closed curve.

Now if $\kappa<c_0$, Theorem 8.5 in \cite{cmp04} shows that there exists a multicurve $\alpha$ in $N$ homologous to zero and with negative ${\S_{\kappa}}$-action such that each component is a simple closed magnetic geodesic with energy $\kappa$. In addition if
 $\tau$ is any other smooth simple closed multicurve homologous to zero, then
\[\sqrt{2\kappa}\,\ell(\tau)+\int_{\tau}\theta\geq \sqrt{2\kappa}\,\ell(\alpha)+\int_{\alpha}\theta={\S_{\kappa}}(\alpha),\]
where $\ell$ denotes length. Using the elementary estimate
\[{\S_{\kappa}}(\tau)\geq \sqrt{2\kappa}\,\ell(\tau)+\int_{\tau}\theta,\]
we deduce that
\[{\S_{\kappa}}(\tau)\geq {\S_{\kappa}}(\alpha)\]
i.e. $\alpha$ is a global minimizer of the free-period action functional in $N$ among all simple closed multicurves homologous to zero. This implies in particular that each component of $\alpha$ is a local minimizer of the free-period action functional  on the space $C^1(\T,N) \times (0,+\infty)$ (because the set of $C^1$-embeddings of the circle is $C^1$-open). 
Since the $\S_{\kappa}$-action of the multicurve is negative, we find at least one closed curve in $N$ which has negative $\S_{\kappa}$-action and which is a local minimizer in $C^1(\T,N) \times (0,+\infty)$. If we project such a curve to $M$ we find a closed curve $\alpha_{\kappa}$ in $M$ which continues to have negative $\S_{\kappa}$-action and is a local minimizer in $C^1(\T,M) \times (0,+\infty)$. The latter assertion follows from the fact that the projection map
\[
C^1(\T,N) \times (0,+\infty) \rightarrow C^1(\T,M) \times (0,+\infty)
\]
is open. The curve $\alpha_{\kappa}$ may not be simple, but that is not an issue for us.

With a slight abuse of terminology, we say that $\alpha=(x,T)$ is a {\em strict local minimizer} of $\S_{\kappa}$ in $C^1(\T,M) \times (0,+\infty)$ (resp.\ in $H^1(\T,M) \times (0,+\infty)$) if the $\T$-orbit of $\alpha$
\[
\T\cdot \alpha = \set{(x(\sigma+\cdot),T)}{\sigma\in \T}
\]
has a neighborhood $\mathcal{U}$ in $C^1(\T,M) \times (0,+\infty)$ (resp.\ in $H^1(\T,M) \times (0,+\infty)$) such that
\[
\S_{\kappa}(\gamma) > \S_{\kappa}(\alpha) \qquad \forall \gamma\in \mathcal{U}\setminus \T\cdot \alpha.
\]
We notice that if $\alpha$ is a local minimizer of $\S_{\kappa}$ but not a strict local minimizer, then there is a sequence $(\alpha_n)\subset (\T \cdot \alpha)^c$ of local minimizers which converges to $\alpha$, and in particular there are infinitely many closed magnetic geodesics.

We need to know that (strict) $C^1$-local minimizers are also (strict) $H^1$-local minimizers. This  follows from the lemma below, whose proof was communicated to us by M.~Mazzucchelli (see also \cite{bn93}, \cite{cha94} and \cite{syc08} for similar results in different settings and with different proofs):

\begin{Lemma} 
\label{minimizers}
Let $\alpha=(x,T)$ be a closed magnetic geodesic which is a local minimzer of $\S_{\kappa}$ in $C^1(\T,M) \times (0,+\infty)$. Then $\alpha$ is also a local minimizer of $\S_{\kappa}$ in $H^1(\T,M) \times (0,+\infty)$. If moreover $\alpha$ is a strict local minimizer in $C^1(\T,M) \times (0,+\infty)$, then it is also a strict local minimizer in $H^1(\T,M) \times (0,+\infty)$.
\end{Lemma}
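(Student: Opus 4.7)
The plan is a Tonelli regularization argument. Fix a $C^1$-neighborhood $\mathcal U$ of $\T\cdot\alpha$ in $C^1(\T,M)\times(0,+\infty)$ on which $\S_\kappa\geq\S_\kappa(\alpha)$. Partition $\T$ into $N$ equal intervals $I_j=[s_{j-1},s_j]$ with $s_j=j/N$. By classical local Tonelli theory (the Weierstrass sufficient condition; equivalently, absence of conjugate points on sufficiently short arcs of the smooth solution $\alpha$), one may choose $N$ so large that each $x|_{I_j}$ is the unique action-minimizer of the fixed-endpoint, fixed-time problem among $C^0$-close curves, and that this minimizer depends $C^1$-continuously on its endpoints and traversal time.

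Given $\gamma=(y,T')$ sufficiently $H^1$-close to $\alpha$, define $\tilde y$ to agree on each $I_j$ with the unique local minimizer joining $y(s_{j-1})$ to $y(s_j)$ in time $T'|I_j|$. Since $H^1\hookrightarrow C^0$, the endpoints $y(s_j)$ converge to $x(s_j)$ as $\gamma\to\alpha$ in $H^1$, so Tonelli continuity gives $\tilde y\to x$ in $C^1$ on each $I_j$; hence $\tilde\gamma=(\tilde y,T')$ is $W^{1,\infty}$-close to $\alpha$, although it may have small velocity jumps at the $s_j$. A mollification $\bar\gamma_\ep$ of $\tilde\gamma$ is smooth and $C^1$-close to $\alpha$ (convolution with a unit bump preserves $L^\infty$-closeness of the velocity to the continuous function $\dot x$, up to the modulus of continuity of $\dot x$), converges to $\tilde\gamma$ in $H^1\times\R$ as $\ep\to 0$, and so $\S_\kappa(\bar\gamma_\ep)\to\S_\kappa(\tilde\gamma)\leq\S_\kappa(\gamma)$. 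For small $\ep$, $\bar\gamma_\ep\in\mathcal U$, so the $C^1$-minimality gives $\S_\kappa(\bar\gamma_\ep)\geq\S_\kappa(\alpha)$, and passing to the limit yields $\S_\kappa(\gamma)\geq\S_\kappa(\alpha)$.

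For the strict assertion, assume $\alpha$ is a strict $C^1$-local minimizer and let $\gamma\notin\T\cdot\alpha$ be $H^1$-close. We have $\S_\kappa(\gamma)\geq\S_\kappa(\alpha)$ by the above, and we argue by cases that equality must fail. If $\gamma\neq\tilde\gamma$, uniqueness of the local minimizers gives $\S_\kappa(\tilde\gamma)<\S_\kappa(\gamma)$; since $\tilde\gamma$ is $H^1$-close to $\alpha$, the non-strict statement applied to $\tilde\gamma$ yields $\S_\kappa(\tilde\gamma)\geq\S_\kappa(\alpha)$, hence $\S_\kappa(\gamma)>\S_\kappa(\alpha)$. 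If $\gamma=\tilde\gamma$ has a velocity jump at some $s_j$, one further Tonelli replacement on a short sub-interval straddling $s_j$ produces $\gamma^*$ with $\S_\kappa(\gamma^*)<\S_\kappa(\gamma)$, and the same argument gives $\S_\kappa(\gamma)>\S_\kappa(\alpha)$. Finally, if $\gamma=\tilde\gamma$ is globally $C^1$, then $\gamma$ is a smooth closed magnetic geodesic, $C^1$-close to $\alpha$ and not in $\T\cdot\alpha$, so strict $C^1$-minimality immediately gives $\S_\kappa(\gamma)>\S_\kappa(\alpha)$.

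The main technical obstacle is that the piecewise replacement $\tilde\gamma$ is only piecewise $C^1$, with velocity jumps at the partition points tending to zero as $\gamma\to\alpha$ in $H^1$; the mollification step is needed to produce a genuine $C^1$-competitor $\bar\gamma_\ep$ to which the $C^1$-local-minimality hypothesis can be applied, while keeping the resulting action error arbitrarily small.
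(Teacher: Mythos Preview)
For the non-strict statement your argument is essentially the paper's: replace $\gamma$ piecewise by short-time Tonelli minimizers to obtain a broken Euler--Lagrange curve $\tilde\gamma$ with $\S_\kappa(\tilde\gamma)\leq\S_\kappa(\gamma)$, then mollify to produce a genuine $C^1$-competitor; the paper runs the identical construction contrapositively on a sequence, with the same mollification estimate showing the smoothed curve is $C^1$-close to $\alpha$.

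For the strict statement the routes differ. The paper argues by contrapositive: if $\alpha$ is a non-strict $H^1$-local minimizer, there is a sequence $\gamma_n\notin\T\cdot\alpha$ converging to $\alpha$ in $H^1$ with $\S_\kappa(\gamma_n)=\S_\kappa(\alpha)$; since these lie at the bottom of a neighborhood on which $\S_\kappa\geq\S_\kappa(\alpha)$, each $\gamma_n$ is itself a local minimizer, hence a \emph{smooth} periodic Euler--Lagrange orbit, and continuous dependence of the Cauchy problem on initial data upgrades $H^1$-convergence to $C^\infty$-convergence, contradicting strict $C^1$-minimality. Your case analysis instead pushes the strict inequality through directly, using strict uniqueness of the Tonelli minimizer (so $\gamma\neq\tilde\gamma$ forces $\S_\kappa(\tilde\gamma)<\S_\kappa(\gamma)$), corner-smoothing at a strict action cost, and in the residual case the observation that $\gamma$ is already $C^1$ and lies in $\mathcal U\setminus\T\cdot\alpha$. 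Both arguments are correct; the paper's ODE-regularity step is shorter and uses nothing beyond the fact that critical points are smooth solutions, while yours stays entirely inside the variational framework. A minor remark: in your last case $\gamma$ is a closed Euler--Lagrange curve of period $T'$ but not necessarily of energy $\kappa$; this is harmless since you only use that $\gamma\in\mathcal U\setminus\T\cdot\alpha$.
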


\begin{proof}
Let us prove the first assertion.
We assume that $\alpha$ is not a local minimizer in $H^1(\T,M)\times (0,+\infty)$ and we prove that it is not a local minimizer in $C^1(\T,M) \times (0,+\infty)$ either. We carry out our argument after a few preliminaries.  
Let $\mathcal{U}$ be a neighborhood of $\alpha$ in $H^1(\T,M) \times (0,+\infty)$ such that the elements $(y,S)\in \mathcal{U}$ have uniform bounds
\[
0 < T_0 \leq S \leq T_1 < +\infty \qquad \mbox{and} \qquad \|\dot{y}\|_{L^2(\T)} \leq C.
\]
In particular, $\mathcal{U}$ is an equicontinuous family of periodic curves $\gamma:\R \rightarrow M$. Then we can find a natural number $h$ which is so large that the following holds: for every $\gamma =(y,S) \in \mathcal{U}$ and every $j=0,1,\dots,h-1$ there is a unique curve $[jS/h,(j+1)S/h] \rightarrow M$ which minimizes  the Lagrangian action among all absolutely continuous curves on $[jS/h,(j+1)S/h]$ with end points $\gamma(jS/h)$ and $\gamma((j+1)S/h)$ (see e.g.\ \cite[Theorem 4.1.1]{maz11}). For such a large $h$ we can define the continuous map
\[
\Lambda: \mathcal{U} \rightarrow H^1(\T,M) \times (0,+\infty), \qquad \gamma \mapsto \hat{\gamma},
\]
which maps every $\gamma=(y,S)\in \mathcal{U}$ to the unique curve $\hat{\gamma}:\R/S \Z \rightarrow M$ such that for every $j=0,1,\dots,h-1$:
\begin{enumerate}
\item $\hat{\gamma} ( jS/h) = \gamma (jS/h)$;
\item $\hat{\gamma}|_{[jS/h,(j+1)S/h]}$ minimizes the Lagrangian action among the absolutely continuous curves on $[jS/h,(j+1)S/h]$ with end points $\gamma(jS/h)$ and $\gamma((j+1)S/h)$.
\end{enumerate}
The continuity of $\Lambda$ is a consequence of \cite[Theorem 4.1.2]{maz11} (and holds with a target space having a much finer topology). The curve $\hat{\gamma} =\Lambda(\gamma)$ is a $h$-broken solution of the Euler-Lagrange equation of $L$. By (ii) we have
\[
\S_{\kappa}\bigl(\Lambda(\gamma) \bigr) \leq \S_{\kappa}(\gamma) \qquad \forall \gamma\in \mathcal{U}.
\]
Moreover, being a smooth solution of the Euler-Lagrange equation of $L$ and by our choice of $h$, $\alpha$ is a fixed point of $\Lambda$. 

Let $\rho\in C^{\infty}(\T)$ be a smooth non-negative function with support in $(-1/2,1/2)$ and integral 1. For each $\epsilon>0$ we set $\rho_{\epsilon}(t) = \rho(t/\epsilon)/\epsilon$, so that $\rho_{\epsilon}$ converges to the Dirac delta $\delta_0$ in the sense of distributions for $\epsilon\rightarrow 0$. By Whitney's theorem, there exists an embedding $M\hookrightarrow \R^N$ and, for a tubular neighborhood $U\subset \R^N$ of $M$, a smooth retraction $r:U \rightarrow M$. For each sufficiently small $\epsilon>0$ we define a continuous mapping
\[
\Theta_{\epsilon} : \mathcal{U} \rightarrow C^{\infty}(\T,M) \times (0,+\infty)
\]
by $\Theta_{\epsilon}(y,S):= (y_{\epsilon},S)$, where
\[
y_{\epsilon} := r \circ (y * \rho_{\epsilon} ),
\]
and $*$ denotes the convolution on $\T$.
 
Consider a sequence $\gamma_n = (x_n,T_n)$ which converges to $\alpha=(x,T)$ in $H^1(\T,M) \times (0,+\infty)$ and such that
\[
\S_{\kappa} (\gamma_n) < \S_{\kappa} (\alpha) \qquad \forall n\in \N.
\]
The sequence $\hat{\gamma}_n = (\hat{x}_n,T_n) := \Lambda(\gamma_n)$ also converges to $\alpha$ in $H^1(\T,M) \times (0,+\infty)$ and
\begin{equation}
\label{sotto1}
\S_{\kappa} (\hat{\gamma}_n) \leq \S_{\kappa} (\gamma_n) < \S_{\kappa} (\alpha) \qquad \forall n\in \N.
\end{equation}
Each restriction $\hat{x}_n|_{[j/h,(j+1)/h]}$ is a reparametrized Euler-Lagrange curve whose end-points converge to $x(j/h)$ and $x((j+1)/h)$. By the continuous dependence of the absolute minimizers of the Lagrangian action with respect to the end-points (see \cite[Theorem 4.1.2]{maz11}), we have the convergence
\begin{equation}
\label{conv}
\hat{x}_n|_{[j/h,(j+1)/h]} \rightarrow  x|_{[j/h,(j+1)/h]}
\end{equation}
in the $C^{\infty}$-topology, for every $j=0,1\dots,h-1$.

By (\ref{sotto1}), we can find an infinitesimal sequence $(\epsilon_n) \subset (0,+\infty)$ such that
\[
\S_{\kappa}\bigl( \Theta_{\epsilon_n} (\hat{\gamma}_n) \bigr) < \S_{\kappa} (\alpha)\qquad \forall n\in \N.
\]
There remains to prove that $\Theta_{\epsilon_n} (\hat{x}_n)$ converges to $x$ in the $C^1$-topology.

Since $\hat{x}_n$ is continuous and piece-wise smooth, it is absolutely continuous and its a.e.\ defined pointwise derivative agrees with its distributional derivative. Therefore
\[
\begin{split}
\left\| \frac{d}{ds}  \bigl( \hat{x}_n * \rho_{\epsilon_n} \bigr) - \right. & \left. \frac{d}{ds} x \right\|_{L^{\infty}(\T)}  =
\left\| \Bigl( \frac{d}{ds} \bigl( \hat{x}_n - x \bigr) \Bigr)* \rho_{\epsilon_n}   + \Bigl( \frac{d}{ds} x \Bigr) * ( \rho_{\epsilon_n} - \delta_0 ) \right\|_{L^{\infty}(\T)} \\
&\leq  \left\| \Bigl( \frac{d}{ds} \bigl( \hat{x}_n - x \bigr)  \Bigr)* \rho_{\epsilon_n}  \right\|_{L^{\infty}(\T)} + \left\| \Bigl( \frac{d}{ds} x \Bigr) * ( \rho_{\epsilon_n} - \delta_0 ) \right\|_{L^{\infty}(\T)} \\ &\leq \underbrace{ \left\| \frac{d}{ds} \bigl( \hat{x}_n - x \bigr)  \right\|_{L^{\infty}(\T)}}_{=:p_n} \cdot \underbrace{ \left\| \rho_{\epsilon_n} \right\|_{L^1(\T)}}_{=1} + \underbrace{ \left\| \Bigl( \frac{d}{ds} x \Bigr) * ( \rho_{\epsilon_n} - \delta_0 ) \right\|_{L^{\infty}(\T)}}_{=:q_n}.
\end{split}
\]
The sequence $(p_n)$ is infinitesimal by (\ref{conv}). The sequence $(q_n)$ is also infinitesimal because $x$ is smooth and $(\rho_{\epsilon_n})$ converges to $\delta_0$  in the distributional sense. Therefore,
\[
\left\| \frac{d}{ds}  \bigl( \hat{x}_n * \rho_{\epsilon_n} \bigr) - \frac{d}{ds} x\right\|_{L^{\infty}(\T)} \rightarrow 0\qquad \mbox{for } n\rightarrow \infty.
\]
Since the retraction $r$ is smooth and fixes $x$, we conclude that
\[
\left\| \frac{d}{ds}  \Theta_{\epsilon} (\hat{x}_n) - \frac{d}{ds} x \right\|_{L^{\infty}(\T)} \rightarrow 0\qquad \mbox{for } n\rightarrow \infty.
\]
This proves the first statement.

Now assume that the local minimizer $\alpha=(x,T)$ is not strict in $H^1(\T,M) \times (0,+\infty)$. Then we can find a sequence of local minimizers $\gamma_n = (x_n,T_n)$ in the complement of $\T\cdot \alpha$ which converge to $\alpha$ in $H^1(\T,M) \times (0,+\infty)$. In particular, $(x_n)$ converges to $x$ uniformly and, up to a subsequence, $(dx_n/ds)$ converges to $dx/ds$ almost everywhere. Since the curves $\gamma_n$ are solutions of the Euler-Lagrange equation, the smooth dependence of the Cauchy problem on initial data implies that $(x_n)$ converges to $x$ in $C^{\infty}(\T,M)$. In particular, the local minimizer $\alpha=(x,T)$ is not strict in $C^1(\T,M) \times (0,+\infty)$, as we wished to prove.
\end{proof}

Summarizing, we have proved the following:

\begin{Lemma}\label{lemma:consecCMP} 
For every $\kappa\in (0,c_0)$ there is a closed magnetic geodesic $\alpha_{\kappa}$ with energy $\kappa$ and $\S(\alpha_{\kappa})<0$ which is a local minimizer of $\S_{\kappa}$ in $H^1(\T,M) \times (0,+\infty)$.
\end{Lemma}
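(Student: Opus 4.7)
The lemma essentially packages the preceding discussion, so my plan is to stitch together the pieces already laid out in the section.

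First, I would invoke the construction recalled from \cite{cmp04}: for $\kappa<c_0$ there exists an absolutely continuous closed curve in the abelian cover $M_0$ with negative $\S_\kappa$-action, and by \cite[Lemma 3.3]{cmp04} it may be replaced by a simple piecewise-smooth closed curve $\beta$ in $M_0$ of constant speed $\sqrt{2\kappa}$ with $\S_\kappa(\beta)<0$. Since the covering group of $p:M_0\to M$ is abelian, hence residually finite, I can choose an intermediate finite cover $p_N:N\to M$ through which $\beta$ factors as a simple closed curve; the image of $\beta$ being compact, this is a standard finiteness argument I would merely sketch.

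Next, I would apply \cite[Theorem 8.5]{cmp04} on the finite cover $N$ to produce a multicurve $\alpha$ in $N$, null-homologous, with each component a simple closed magnetic geodesic of energy $\kappa$, which minimizes $\S_\kappa$ among smooth simple multicurves homologous to zero. Using the elementary inequality $\S_\kappa(\tau)\geq \sqrt{2\kappa}\,\ell(\tau)+\int_\tau\theta$ (with equality precisely when the curve has constant speed $\sqrt{2\kappa}$), the minimization over the length-plus-flux functional displayed in the text upgrades $\alpha$ to a global minimizer of $\S_\kappa$ over all smooth simple null-homologous multicurves. Since $\S_\kappa(\beta)<0$ already lies in the competition class (after the covering passage), we get $\S_\kappa(\alpha)<0$, so at least one component, call it $\alpha^\comp$, has $\S_\kappa(\alpha^\comp)<0$.

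Then I would observe that $C^1$-embeddings of the circle form a $C^1$-open subset of $C^1(\T,N)\times(0,+\infty)$, and perturbing a single component of $\alpha$ while keeping the others fixed yields another null-homologous simple multicurve. Hence $\alpha^\comp$ is a local minimizer of $\S_\kappa$ in $C^1(\T,N)\times(0,+\infty)$. Projecting by the open map $C^1(\T,N)\times(0,+\infty)\to C^1(\T,M)\times(0,+\infty)$ gives a closed magnetic geodesic $\alpha_\kappa:=p_N\circ\alpha^\comp$ in $M$ which is a $C^1$-local minimizer and still has $\S_\kappa(\alpha_\kappa)<0$ (the action is preserved since $L$ lifts to $N$ and $\alpha^\comp$ is a single connected loop). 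Finally, I would apply Lemma~\ref{minimizers} to upgrade the $C^1$-local minimality to $H^1$-local minimality, which gives the conclusion.

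The only genuinely non-routine step has already been done: the bootstrap from $C^1$- to $H^1$-local minimality in Lemma~\ref{minimizers}. Apart from that, the mild subtlety is ensuring the action is strictly negative \emph{after} projecting from the cover $N$ to $M$; this is automatic because the free-period action of a single loop is computed from the lifted Lagrangian, and $p_N\circ\alpha^\comp$ has the same action as $\alpha^\comp$. Everything else amounts to assembling cited results, so I expect no further obstacle.
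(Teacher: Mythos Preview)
Your proposal is correct and follows essentially the same route as the paper: the lemma is just a summary of the section, and you have faithfully assembled the pieces (the covering argument, \cite[Theorem 8.5]{cmp04}, the $C^1$-openness of embeddings, the open projection from the finite cover, and the $C^1$-to-$H^1$ upgrade via Lemma~\ref{minimizers}). Nothing is missing or different in substance.
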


We observe that in principle $\alpha_{\kappa}$ may belong to a non-trivial homotopy class and might even be not null-homologous (since we have chosen one suitable component of the null-homologous cycle $\alpha$).

\section{Persistence of local minimizers}

Set $\mathcal{M} := H^1(\T,M) \times (0,+\infty)$.
The fact that $M$ is an orientable surface implies that a closed curve in $M$ which is a (strict) local minimizer of $\S_{\kappa}$ on $\mathcal{M}$ remains a (strict) local minimizer also when iterated:

\begin{Lemma}
\label{persistence}
If $\alpha:[0,T] \rightarrow M$ is a local minimizer (resp.\ strict local minimizer) of $\S_{\kappa}$ on $\mathcal{M}$, then for every $n\geq 1$ its $n$-th iterate $\alpha^n(t):[0,nT]\rightarrow M$ is also a local minimizer (resp.\ strict local minimizer) of $\S_{\kappa}$ on $\mathcal{M}$.
\end{Lemma}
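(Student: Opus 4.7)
My plan is to reduce the statement to a local problem in a tubular neighborhood provided by the orientability of $M$, and then use a slicing argument on the $n$-fold cover of that neighborhood.

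By Lemma \ref{minimizers}, the notions of (strict) local minimizer coincide in the $C^1$ and $H^1$ topologies on $\mathcal{M}=H^1(\T,M)\times(0,+\infty)$, so I will work with $C^1$-local minimizers throughout. Orientability of $M$ provides a global unit normal $\nu$ along $\alpha$. Define the tubular map
\[
\phi:\T\times(-\delta,\delta)\to M, \qquad \phi(t,s):=\exp_{\alpha(t)}(s\,\nu(t)),
\]
which is a local diffeomorphism for $\delta$ small, and compose with the $n$-fold cover $p_n:\tilde M_n:=\bigl(\R/(nT\Z)\bigr)\times(-\delta,\delta)\to\T\times(-\delta,\delta)$. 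The composite $\phi_n:=\phi\circ p_n$ is still a local diffeomorphism, and the iterate $\alpha^n$ lifts through $\phi_n$ to the simple loop $\tilde\alpha_n(t)=(t,0)$. The pullback $\tilde L_n:=\phi_n^{*}L$ is Tonelli on $\tilde M_n$, and any closed curve in $M$ sufficiently $C^1$-close to $\alpha^n$ admits a unique lift to $\tilde M_n$ close to $\tilde\alpha_n$ with the same free-period action. Hence it suffices to prove that $\tilde\alpha_n$ is a (strict) local minimizer in $C^1\bigl(\R/(nT\Z),\tilde M_n\bigr)\times(0,+\infty)$ of the action associated to $\tilde L_n$.

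Here the two-dimensionality enters: because $\tilde\alpha_n$ is a \emph{simple} loop in the cylinder $\tilde M_n$, every $C^1$-close competitor can be reparametrized as a graph $\tilde\beta(t)=(t,v(t))$ with $v:\R/(nT\Z)\to(-\delta,\delta)$ small (the reparametrization cost is quadratic in $\|v\|_{C^1}$ and can be absorbed). For each shift $\sigma\in[0,T]$ I would slice $v$ into $n$ non-periodic pieces $v^{\sigma}_j(s):=v(\sigma+jT+s)$, $s\in[0,T]$, $j=0,\dots,n-1$, and close each one into a $T$-periodic function $\hat v^{\sigma}_j$ by inserting the short Lagrangian-minimizing arc between its endpoint values. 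Each resulting graph projects via $\phi$ to a closed curve in $M$ which is $C^1$-close to $\alpha$, so the local minimizer hypothesis yields
\[
\sum_{j=0}^{n-1}\S_\kappa\bigl(\hat v^{\sigma}_j\bigr)\geq n\,\S_\kappa(\alpha)=\S_\kappa(\alpha^n).
\]

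The main obstacle is controlling the \emph{closure cost} $E(\sigma):=\sum_j \S_\kappa(\hat v^{\sigma}_j)-\S_\kappa(\tilde\beta)$ and upgrading the above display into the desired inequality for $\tilde\beta$ itself. I would run this last step by contradiction: assuming a sequence $\tilde\beta_k\to\tilde\alpha_n$ with $\S_\kappa(\tilde\beta_k)<\S_\kappa(\alpha^n)$, I would integrate $E$ over $\sigma\in[0,T]$ and exploit the telescoping identity $\sum_j\bigl(v(\sigma+(j+1)T)-v(\sigma+jT)\bigr)=0$ coming from $nT$-periodicity of $v$, combined with the quadratic behavior of the Lagrangian action under the endpoint-matching variation, to show that the mean of $E(\sigma)$ is of strictly smaller order than the deficit $\S_\kappa(\alpha^n)-\S_\kappa(\tilde\beta_k)$. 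Picking $\sigma_\ast$ below the mean then forces $\sum_j \S_\kappa(\hat v^{\sigma_\ast}_j)<n\,\S_\kappa(\alpha)$ for large $k$, so at least one $\hat v^{\sigma_\ast}_j$ violates the local minimizer property of $\alpha$, contradicting the hypothesis. The strict case is handled in the same way, noting that if $\tilde\beta\notin\T\cdot\tilde\alpha_n$ then some $\hat v^{\sigma_\ast}_j$ cannot lie in $\T\cdot\tilde\alpha$, so strictness supplies the needed strict inequality. The hard step is precisely the closure-cost estimate, and it is here that orientability plays its essential role by trivializing the normal bundle to $\alpha$ so that $v$ is a scalar function to which the elementary telescoping cancellation applies.
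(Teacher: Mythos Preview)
Your setup is sound and matches the paper's: reduce to $C^1$ via Lemma~\ref{minimizers}, use the orientable normal bundle to write nearby competitors in a tubular neighborhood with a \emph{scalar} transverse coordinate, and lift $\alpha^n$ to a simple loop on an $n$-fold cylinder. The gap is in the closure-cost step.

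The telescoping identity $\sum_j\bigl(v(\sigma+(j{+}1)T)-v(\sigma+jT)\bigr)=0$ only kills the \emph{first-order} boundary contributions that arise when you force each piece $v^\sigma_j$ to close up: the term $p(\sigma)\cdot\bigl(v(\sigma+(j{+}1)T)-v(\sigma+jT)\bigr)$ coming from $L_v$ at the matching point does telescope. But what remains in $E(\sigma)$ is of order $\|v\|\cdot\sum_j|gap_j|+\sum_j|gap_j|^2$, which is $O(\|v\|_{C^1}^2)$. The deficit $\S_\kappa(\alpha^n)-\S_\kappa(\tilde\beta)$ is at best $O(\|v\|^2)$ as well (second variation), and can be much smaller if $\alpha$ is degenerate. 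So there is no a priori reason for the mean of $E(\sigma)$ to be of \emph{strictly smaller order} than the deficit; both live at the same scale, and averaging in $\sigma$ does not improve the order. Your contradiction does not close.

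The paper avoids this entirely. Rather than closing all $n$ slices and paying a cost, it observes that because the transverse coordinate is scalar (this is exactly where orientability of the surface enters) the function $\sigma\mapsto v(\sigma+T)-v(\sigma)$, or more precisely its analogue for a curve that need not be a graph, must vanish somewhere by the intermediate value theorem: the values $\lambda_h(\theta_h^{-1}(jT))$ form an $n$-periodic real sequence, hence have a local maximum, and an IVT argument produces a genuine self-intersection $\gamma_h(s_h)=\gamma_h(t_h)$ with $t_h-s_h\to T$. This is Lemma~\ref{split}. At such a point the closure cost is \emph{zero}: $\gamma_h$ splits into two honest closed curves, one $C^1$-close to $\alpha$ and the other $C^1$-close to $\alpha^{n-1}$. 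One then argues by induction on $n$: the inductive hypothesis gives $\S_\kappa(\gamma_h|_{[s_h,t_h]})\ge\S_\kappa(\alpha)$ and $\S_\kappa(\gamma_h|_{[t_h,s_h+T_h]})\ge(n-1)\S_\kappa(\alpha)$, and additivity of the free-period action yields the contradiction. The strict case follows the same pattern, noting that if both pieces lie in the $\T$-orbits of $\alpha$ and $\alpha^{n-1}$ then $\gamma_h\in\T\cdot\alpha^n$.

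In short: replace ``average over $\sigma$ and estimate the closure cost'' by ``find one $\sigma$ with zero closure cost via IVT, then induct''.
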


This type of result fails in dimensions greater than or equal to three even in the Riemannian case as examples of Hedlund show \cite{H}.
It also fails for non-orientable surfaces, cf. \cite[Example 9.7.1]{KH}.

The proof of Lemma \ref{persistence} hinges on the following lemma, in which we say that a sequence of $C^1$ curves $\gamma_h:[S_h,T_h] \rightarrow M$ converges to a curve $\gamma:[S,T] \rightarrow M$ in $C^1$ iff $T_h-S_h\rightarrow T-S$ and $x_h(s):= \gamma_h(S_h+(T_h-S_h)s)$ converges to $x(s):= \gamma(S+(T-S)s)$ in $C^1([0,1],M)$.

\begin{Lemma}
\label{split}
Let $\gamma:[0,T] \rightarrow M$ be a smooth immersed closed curve, let $n\geq 2$ be an integer, and let $\gamma_h:[0,T_h]\rightarrow M$ be a sequence of $C^1$ closed curves  which converges to the $n$-th iterate $\gamma^n:[0,nT]\rightarrow M$ in $C^1$. Then, up to a subsequence and up to time-shifts in parametrization of the closed curves $\gamma_h$ and $\gamma$, there exists a sequence $S_h\in [0,T_h]$ such that:
\begin{enumerate}
\item $S_h\rightarrow T$;
\item $\gamma_h(0)=\gamma_h(S_h)=\gamma_h(T_h)$;
\item the sequence $\gamma_h|_{[0,S_h]}$ converges to $\gamma$ in $C^1$;
\item the sequence $\gamma_h|_{[S_h,T_h]}$ converges to $\gamma^{n-1}$ in $C^1$.
\end{enumerate}
\end{Lemma}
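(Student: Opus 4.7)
The strategy is to lift the $\gamma_h$ into a tubular neighbourhood of $\gamma$, and then reduce the search for a self-intersection of $\gamma_h$ near time $T$ to a one-dimensional intermediate-value argument, which works precisely because $M$ is an oriented surface.

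Since $M$ is oriented and $\gamma$ is an immersed closed curve, the normal bundle of $\gamma$ is a trivial line bundle over $\R/T\Z$; the normal exponential map furnishes, for some $\ep>0$, an immersion (hence local diffeomorphism, between equidimensional manifolds) $\tau:\R/T\Z\times(-\ep,\ep)\to M$ with $\tau(t,0)=\gamma(t)$. The iterate $\gamma^n$ lifts through $\tau$ as $t\mapsto(t\bmod T,0)$, and the $C^1$-convergence $\gamma_h\to\gamma^n$ allows us to lift each $\gamma_h$ (for $h$ large) by path-continuation to $\tilde\gamma_h(t)=(u_h(t),v_h(t))$ converging to the lift of $\gamma^n$ in $C^1$. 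Since $\dot\gamma$ never vanishes we have $\dot u_h>0$, so the lift $\tilde u_h:[0,T_h]\to\R$ has total increment in $T\Z$ (by closedness of $\gamma_h$) and close to $nT$, hence equal to $nT$. After a preliminary time-shift of $\gamma_h$ to achieve $\tilde u_h(0)=0$, set $f_h:=v_h\circ\tilde u_h^{-1}:[0,nT]\to(-\ep,\ep)$; then $f_h\to 0$ in $C^1$ and $f_h(0)=f_h(nT)$.

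The key step is the IVT applied to $g_h(u):=f_h(u+T)-f_h(u)$ on $[0,(n-1)T]$. The values $g_h(kT)$ for $k=0,\dots,n-1$ sum telescopically to $f_h(nT)-f_h(0)=0$, so either they all vanish (take $u_h^*:=0$) or two consecutive ones have opposite signs, giving by IVT some $u_h^*\in[kT,(k+1)T]$ with $g_h(u_h^*)=0$, for some $k\in\{0,\dots,n-2\}$. Lifting once more to $\R\times(-\ep,\ep)$, the points $(u_h^*,f_h(u_h^*))$ and $(u_h^*+T,f_h(u_h^*))$ project to the same point of the tube, and hence to the same point of $M$ under $\tau$; setting $a_h:=\tilde u_h^{-1}(u_h^*)$ and $S_h:=\tilde u_h^{-1}(u_h^*+T)-a_h$ yields $\gamma_h(a_h)=\gamma_h(a_h+S_h)$ with $S_h\to T$ (since $\tilde u_h$ is $C^1$-close to the identity). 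A further time-shift of $\gamma_h$ by $a_h$ places the self-intersection at time $0$; passing to a subsequence with $u_h^*\to u^*$ and time-shifting $\gamma$ by $u^*\bmod T$, properties (i)--(iv) follow from the $C^1$-convergence $\gamma_h\to\gamma^n$, because on $[0,S_h]$ the reparametrized curve tracks one full winding of $\gamma^n$ (a time-shift of $\gamma$) and on $[S_h,T_h]$ the remaining $n-1$ windings (a time-shift of $\gamma^{n-1}$).

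The main obstacle is the lifting step when $\gamma$ fails to be simple: the tube $\tau$ is then only an immersion, so $\tilde\gamma_h$ must be defined by global path-continuation from a chosen initial lift of $\gamma^n$, not by inverting $\tau$. Both hypotheses of the lemma enter crucially at this point: orientability trivialises the normal bundle of $\gamma$, so the tube is a cylinder (not a M\"obius band) and the winding number of $\gamma_h$ is well-defined, while $\dim M=2$ is essential since it makes $f_h$ scalar-valued and allows the telescoping sum to force a zero of a single scalar function. In higher dimensions this collapses, in agreement with the Hedlund-type counterexamples cited in the paper.
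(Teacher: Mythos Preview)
Your proof is correct and follows essentially the same approach as the paper: both trivialize a tubular neighbourhood of $\gamma$ as $\R/T\Z\times(-\ep,\ep)$, lift $\gamma_h$ through this immersion, reparametrize by the angular coordinate, and reduce the existence of a self-intersection at angular distance $T$ to a one-variable intermediate-value argument on the normal coordinate. The only cosmetic difference is in how the sign change is located: the paper observes that the $n$-periodic sequence $\mu_j=f_h(jT)$ must have a weak local maximum, whereas you use the telescoping identity $\sum_{k=0}^{n-1}g_h(kT)=0$; both yield consecutive values of $g_h$ with product $\leq 0$, and the remainder of the argument (passing to a subsequence, time-shifting by the limit of the intersection parameter) is identical.
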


\begin{proof} Extend the functions $\gamma$ and $\gamma_h$ to the whole $\R$ by periodicity. Since $M$ is an orientable surface, we can choose coordinates $(\theta,\lambda)\in \R/T\Z \times \R$ on a neighborhood of the immersed closed curve $\gamma(\R)$ in such a way that
\[
\gamma(t) = (t + T\Z,0) \qquad \forall t\in \R.
\]
Up to neglecting finitely many terms, the sequence $(\gamma_h)$ consists of curves whose image is in such a neighborhood and hence 
\[
\gamma_h(t) = \bigl( \theta_h(t) + T\Z, \lambda_h(t)\bigr),
\]
where the sequences of $C^1$ functions $\theta_h,\lambda_h: \R \rightarrow \R$ converge to the identity and to the zero function in $C^1(\R,\R)$, $\lambda_h$ is $T_h$-periodic, and $\theta_h$ satisfies
\[
\theta_h(t+T_h) = nT + \theta_h(t)\qquad \forall t\in \R.
\] 
Up to neglecting finitely many terms, $\theta_h:\R \rightarrow \R$ is a $C^1$ diffeomorphism and its inverse satisfies
\begin{equation}
\label{perthe}
\theta_h^{-1}(s+nT) = T_h + \theta_h^{-1}(s) \qquad \forall s\in \R.
\end{equation}
Fix some $h\in \N$. The real bi-infinite sequence
\[ 
(\mu_j)_{j\in \Z} :=  \Bigl( \lambda_h\bigl( \theta_h^{-1}(jT) \bigr) \Bigr)_{j\in \Z}
\]
is $n$-periodic. In particular, it cannot be strictly monotone: w.l.o.g.\ we can find an integer $k=k_h\in [1,n-1]$ such that
\[
\mu_{k-1} \leq \mu_k \quad \mbox{and} \quad \mu_k \geq \mu_{k+1}.
\]
Then the continuous function
\[
f(\sigma):= \lambda_h \Bigl( \theta_h^{-1} \bigl( (k+\sigma) T\bigr) \Bigr) - \lambda_h \Bigl( \theta_h^{-1} \bigl( (k-1+\sigma) T\bigr) \Bigr)
\]
satisfies $f(0)\geq 0$ and $f(1)\leq 0$. Therefore, there exists $\sigma_h\in [0,1]$ such that $f(\sigma_h)=0$. Set
\[
s_h := \theta_h^{-1} \bigl( (k_h-1 + \sigma_h) T) \quad \mbox{and} \quad t_h := \theta_h^{-1} \bigl( (k_h + \sigma_h) T).
\]
Up to a subsequence, $(s_h)$ converges to some $\bar{s}$ and $(t_h)$ converges to some $\bar{t}$ with $\bar{t}-\bar{s}=T$. Since $\sigma_h$ is a zero of $f$, we find
\[
\begin{split}
\gamma_h(s_h) &= \Bigl( (k_h-1+\sigma_h) T + T\Z, \lambda_h \bigl(\theta_h^{-1}((k_h-1+\sigma_h)T)\bigr) \Bigr) \\ &= \Bigl( (k_h+\sigma_h)T + T\Z, \lambda_h \bigl( \theta_h^{-1}((k_h+\sigma_h)T) - T_h\bigr) \Bigr) \\ &= \Bigl( (k_h+\sigma_h)T + T\Z, \lambda_h \bigl( \theta_h^{-1}((k_h+\sigma_h)T) \bigr) \Bigr) = \gamma_h(t_h),
\end{split}
\]
where we have used (\ref{perthe}) and the $T_h$-periodicity of $\lambda_h$. Moreover, the sequence  $\gamma_h|_{[s_h,t_h]}$ converges to  $\gamma|_{[\bar{s},\bar{t}]}$ in $C^1$, while $\gamma_h|_{[t_h,s_h+T_h]}$ converges to $\gamma|_{[\bar{T},\bar{t}+nT]}$ in $C^1$.  The conclusion follows by shifting $\gamma_h$ by $s_h$ and $\gamma$ by $\bar{s}$, and by setting $S_h:= t_h-s_h$.
\end{proof}

\begin{proof}[Proof of Lemma \ref{persistence}.]  We assume that $\alpha$ is a local minimizer (resp.\ strict local minimizer) and we prove that also $\alpha^n$ is a local minimizer (resp.\ strict local minimizer) for every $n\in \N$.
We argue by induction on $n$, the case $n=1$ being trivially true. Assume the statement to be true for $n-1$ and, by contradiction, that it fails for some $n\geq 2$. Then Lemma \ref{minimizers} implies that $\alpha^n$ is not a local minimizer (resp.\ not a strict local minimizer) in the $C^1$ topology: there exists a sequence of closed $C^1$ curves $\gamma_h:[0,T_h]\rightarrow M$  which converges to $\alpha^n$ in $C^1$ and satisfies
\begin{equation}
\label{contr}
\begin{split}
& \S_{\kappa}(\gamma_h)<\S_k(\alpha^n) = n \S_{\kappa} (\alpha) \\  \mbox{(resp. } \gamma_h \notin \T\cdot \alpha^n \mbox{ and } & \S_{\kappa}(\gamma_h)\leq \S_k(\alpha^n) = n \S_{\kappa} (\alpha) \mbox{ )}.
\end{split}
\end{equation}
By Lemma \ref{split}, up to a subsequence and time-shifts, $\gamma_h$ is the juxtaposition of two curves $\gamma_h|_{[0,S_h]}$ and $\gamma_h|_{[S_h,T_h]}$ such that $\gamma_h(0)=\gamma_h(S_h)=\gamma_h(T_h)$, which converge to $\alpha$ and $\alpha^{n-1}$ in $C^1$, respectively. The curves $\gamma_h|_{[0,S_h]}$ and $\gamma_h|_{[S_h,T_h]}$ belong a fortiori to $\mathcal{M}$, and they converge to $\gamma$ and $\gamma^n$ in the topology of $\mathcal{M}$. 
Since $\alpha$ and $\alpha^{n-1}$ are local minimizers (resp.\ strict local minimizers) in $\mathcal{M}$ by the inductive hypotesis,
\[
\begin{split}
& \S_{\kappa}\bigl( \gamma_h|_{[0,S_h]} \bigr) \geq \S_{\kappa}(\gamma), \quad 
\S_{\kappa}\bigl( \gamma_h|_{[S_h,T_h]} \bigr) \geq \S_{\kappa}(\gamma^{n-1}) = (n-1) \S_{\kappa}(\gamma), \\
\mbox{(resp. } & \S_{\kappa}\bigl( \gamma_h|_{[0,S_h]} \bigr) > \S_{\kappa}(\gamma) \mbox{ or } \gamma_h|_{[0,S_h]} \in \T\cdot \alpha, \\
& \S_{\kappa}\bigl( \gamma_h|_{[S_h,T_h]} \bigr) > \S_{\kappa}(\gamma^{n-1}) = (n-1) \S_{\kappa}(\gamma) \mbox{ or } \gamma_h|_{[S_h,T_h]} \in \T\cdot \alpha^{n-1} \mbox{ )},
\end{split}
\]
for $h$ large enough, from which we obtain
\[
\begin{split}
& \S_{\kappa}(\gamma_h) = \S_{\kappa}\bigl( \gamma_h|_{[0,S_h]}  \bigr) + \S_{\kappa}\bigl( \gamma_h|_{[S_h,T_h]}\bigr) \geq  \S_{\kappa}(\gamma) + (n-1) \S_{\kappa}(\gamma) = n \S_{\kappa} (\gamma), \\
\mbox{(resp. } & \S_{\kappa}(\gamma_h) >  \S_{\kappa}(\gamma) + (n-1) \S_{\kappa}(\gamma) = n \S_{\kappa} (\gamma) \mbox{ or } \gamma_h \in \T\cdot \alpha^n \mbox{ )},
\end{split}
\]
which contradicts (\ref{contr}).
\end{proof}

\section{The negative gradient flow of $\S_{\kappa}$ and strict local minimizers}

Let $\varphi:(0,+\infty) \rightarrow \R$ be a smooth function such that $\varphi(T)=T^2$ for $T\leq 1/2$ and $\varphi(T)=1$ for $T\geq 1$. Following \cite{con06}, we endow the Hilbert manifold $\mathcal{M}= H^1(\T,M)\times (0,+\infty)$ with the Riemannian structure
\[
\langle (\xi_1,\tau_1), (\xi_2,\tau_2) \rangle_{(x,T)} := \tau_1 \tau_2 + \varphi(T) 
 \langle \xi_1,\xi_2 \rangle_x,
\]
where $(x,T)\in \mathcal{M}$,
\[
(\xi_1,\tau_1), (\xi_2,\tau_2) \in T_x \mathcal{M} = T_x H^1(\T,M) \times \R,
\]
and $\langle \cdot,\cdot \rangle_x$ denotes the standard Riemannian structure on $H^1(\T,M)$ which is induced by a Riemannian structure on $M$. Since $\varphi(T) \rightarrow 0$ for $T\rightarrow 0$, this metric has more non-converging Cauchy sequences than the product one and is a fortiori not complete. 

The functional $\S_{\kappa}$ is smooth on $\mathcal{M}$, and 
$\nabla \S_{\kappa}$ denotes its gradient vector field with respect to the Riemannian structure defined above. We shall use the following results from \cite{con06}.

\begin{Lemma}[\cite{con06}, Lemma 6.9]
\label{gon1}
Let $(x,T):[0,\rho) \rightarrow \mathcal{M}$ be a flow line of $-\nabla \S_{\kappa}$ such that 
\[
\liminf_{r\rightarrow \rho} T(r)=0.
\]
Then 
\[
\lim_{r\rightarrow \rho} \S_{\kappa}\bigl(x(r),T(r)\bigr) = 0.
\]
\end{Lemma}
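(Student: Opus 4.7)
The plan is to exploit the monotonicity of $\S_{\kappa}$ along the negative gradient flow together with the explicit form of the action functional in the magnetic case. First I would compute
\[
\S_{\kappa}(x,T) = \frac{E(x)}{T} + \int_{\T} \theta_{x(s)}(\dot{x}(s))\,ds + \kappa T, \qquad E(x) := \tfrac{1}{2}\int_{\T} |\dot{x}(s)|^2\,ds,
\]
and differentiate to obtain
\[
\partial_T \S_{\kappa}(x,T) = -\frac{E(x)}{T^2} + \kappa.
\]
Since in the Riemannian structure on $\mathcal{M}$ introduced above the $T$-factor carries the standard inner product, the $T$-component of $\nabla \S_{\kappa}$ equals $\partial_T \S_{\kappa}$, and the flow equation projected onto the $T$-coordinate reads
\[
\frac{dT}{dr} = \frac{E(x(r))}{T(r)^2} - \kappa.
\]

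The key observation will be that whenever $T$ attains a new infimum from above, this ODE forces $E(x)$ to be small. Concretely, for each $\delta \in (0, T(0))$ I would set $r_{\delta} := \inf\{r \in [0,\rho) : T(r) = \delta\}$; continuity of $T$, the assumption $\liminf_{r \to \rho} T(r) = 0$, and the intermediate value theorem make $r_\delta$ well-defined, with $T > \delta$ on $[0,r_\delta)$ and $T(r_\delta) = \delta$. Consequently $T'(r_\delta) \leq 0$, and the flow equation gives $E(x(r_\delta)) \leq \kappa \delta^2$. Combining this with the elementary bound $\bigl|\int_{\T}\theta_{x}(\dot{x})\,ds\bigr| \leq \|\theta\|_{\infty}\sqrt{2E(x)}$ yields
\[
|\S_{\kappa}(x(r_\delta), \delta)| \leq \kappa \delta + \|\theta\|_{\infty}\sqrt{2\kappa}\,\delta + \kappa \delta = O(\delta),
\]
so $\S_{\kappa}(x(r_\delta), T(r_\delta)) \to 0$ as $\delta \to 0^+$.

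To conclude, I would first verify that $r_\delta \to \rho$ as $\delta \to 0^+$: the map $\delta \mapsto r_\delta$ is non-increasing, so its limit $R \in [0,\rho]$ exists; if $R < \rho$, continuity of $T$ at $R$ would force $T(R) = 0$, contradicting $T > 0$ on $[0,\rho)$. On the other hand, $r \mapsto \S_{\kappa}(x(r), T(r))$ is monotonically non-increasing along the flow (its derivative equals $-\|\nabla \S_{\kappa}\|^2 \leq 0$), so the full limit $\lim_{r\to\rho^-}\S_{\kappa}(x(r),T(r))$ exists in $[-\infty,\S_{\kappa}(x(0),T(0))]$ and must coincide with the subsequential limit along $r_\delta$, namely $0$. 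The one subtle step is the identification $R = \rho$, which rules out pathological oscillations of $T$ that never reach arbitrarily small values near $\rho$; the rest reduces to substituting the explicit action formula and invoking the Cauchy--Schwarz bound on $\int\theta$.
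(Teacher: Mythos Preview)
Your argument is correct. Note, however, that the paper does not supply its own proof of this lemma: it is quoted verbatim from Contreras \cite{con06}, Lemma 6.9, and no proof is reproduced. So there is nothing in the present paper to compare your approach against.

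For what it is worth, your argument follows the same basic mechanism as Contreras': the $T$-component of the gradient flow equation reads $T' = E/T^{2} - \kappa$ (with $E$ the appropriate kinetic quantity), and at a first-crossing time $r_{\delta}$ of the level $\{T=\delta\}$ one has $T'(r_{\delta})\le 0$, which bounds the energy by $\kappa\delta^{2}$; monotonicity of the action along the flow then upgrades the subsequential conclusion to a full limit. The difference is that Contreras works with a general Tonelli Lagrangian, where the action has no closed formula and the estimate $\S_{\kappa}(x(r_{\delta}),\delta)=O(\delta)$ requires a Taylor-type bound on $L$ near the zero section; you exploit the explicit magnetic form $\S_{\kappa}(x,T)=E(x)/T+\int x^{*}\theta+\kappa T$ together with the Cauchy--Schwarz bound on $\int x^{*}\theta$, which makes the estimate immediate. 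This is a legitimate and cleaner route in the present setting, at the cost of not extending beyond magnetic Lagrangians.
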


\begin{Lemma}
\label{gon2}
Let $\kappa>0$ and let $(x_h,T_h)$ be a Palais-Smale sequence for $\S_{\kappa}$. If $(T_h)$ is bounded, then $(x_h,T_h)$ has a convergent subsequence in $\mathcal{M}$.
\end{Lemma}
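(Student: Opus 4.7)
The plan is to extract a convergent subsequence by first obtaining a weakly $H^1$-convergent subsequence of $(x_h)$ and then upgrading to strong $H^1$-convergence using the vanishing of the gradient. I read the hypothesis ``$(T_h)$ is bounded'' as meaning $T_h\in [T_{\min},T_{\max}]\subset(0,+\infty)$, since otherwise the sequence would leave the open manifold $\mathcal{M}$ and no convergent subsequence in $\mathcal{M}$ could exist; in particular $\varphi(T_h)$ is then pinched between two positive constants.

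The first step is to decouple the two components of the gradient. With the metric $\tau_1\tau_2+\varphi(T)\langle \xi_1,\xi_2\rangle_x$ one has
$$\|\nabla \S_{\kappa}\|^2 = (\partial_T \S_{\kappa})^2 + \frac{1}{\varphi(T)}\|\nabla_x \S_{\kappa}\|_x^2,$$
where $\nabla_x\S_{\kappa}$ is taken with respect to the standard $H^1$-metric on $T_xH^1(\T,M)$. The Palais--Smale condition therefore yields $\partial_T\S_{\kappa}(x_h,T_h)\to 0$ and, by the lower bound on $\varphi(T_h)$, also $\|\nabla_x\S_{\kappa}(x_h,T_h)\|_{x_h}\to 0$ in the standard $H^1$-norm. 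A direct computation gives $\partial_T\S_{\kappa}(x,T)=-\frac{1}{2T^2}\int_{\T}|\dot x|^2\,ds+\kappa$, so that $\int_{\T}|\dot x_h|^2\,ds\to 2\kappa T_{\infty}^2$ along any subsequence with $T_h\to T_{\infty}$. Using compactness of $M$ one concludes that $(x_h)$ is bounded in $H^1(\T,M)$. Passing to a subsequence, $T_h\to T_{\infty}\in[T_{\min},T_{\max}]$ and $x_h\rightharpoonup x_{\infty}$ weakly in $H^1$, hence uniformly by Rellich--Kondrachov.

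To upgrade weak to strong $H^1$-convergence, I would use the Whitney embedding $M\hookrightarrow\R^N$ together with the tubular retraction $r:U\to M$ already introduced in the proof of Lemma \ref{minimizers}, and set $\xi_h:=dr(x_h)[x_h-x_{\infty}]\in T_{x_h}H^1(\T,M)$. Then $(\xi_h)$ is bounded in $H^1$ and tends to $0$ in $C^0$, and from $\|\nabla_x\S_{\kappa}(x_h,T_h)\|_{x_h}\to 0$ I obtain $d_x\S_{\kappa}(x_h,T_h)[\xi_h]\to 0$. Expanding this differential, the magnetic term $\int d\theta_{x_h}(\xi_h,\dot x_h)\,ds$ is $O(\|\xi_h\|_{C^0}\|\dot x_h\|_{L^2})\to 0$, while the kinetic term, combined with the weak convergence identity $\int\langle\dot x_h,\dot x_{\infty}\rangle\,ds\to\int|\dot x_{\infty}|^2\,ds$, forces $\|\dot x_h\|_{L^2}^2\to\|\dot x_{\infty}\|_{L^2}^2$. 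Weak convergence plus convergence of norms in the Hilbert space $L^2$ then yields strong $L^2$-convergence of $\dot x_h$, hence strong $H^1$-convergence of $(x_h,T_h)$ to $(x_{\infty},T_{\infty})$ in $\mathcal{M}$.

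The main technical obstacle is carrying out this last step intrinsically on the Hilbert manifold $H^1(\T,M)$: the naive difference $x_h-x_{\infty}$ lives in $\R^N$ and must be composed with $dr(x_h)$ to produce a genuine tangent vector, generating curvature-type correction terms which need to be shown to be of order $\|\xi_h\|_{C^0}$ and thus negligible against the kinetic principal part. A secondary concern is that the magnetic term $\int\theta_x(\dot x)\,ds$ is only first order in $\dot x$ and could in principle obstruct the passage from weak to strong convergence; however, the smoothness of $\theta$ and the compactness of $M$ ensure that its contribution reduces to a lower-order remainder and so does not interfere with the argument.
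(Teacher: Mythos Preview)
Your reading of the hypothesis is the main gap. In the paper, ``$(T_h)$ is bounded'' means only $T_h\le T_{\max}$; the lower bound $T_h\ge T_{\min}>0$ is not assumed but \emph{proved}. Your justification --- that otherwise the sequence would leave $\mathcal{M}$ and no convergent subsequence could exist --- is circular: the existence of a convergent subsequence is precisely the conclusion of the lemma, so you cannot invoke it to reinterpret the hypothesis. If $\liminf T_h=0$, one must rule this out by an argument, not by fiat. The paper does so by citing Proposition~3.8 of \cite{con06}: a Palais--Smale sequence with $T_h\to 0$ would force a subsequence of $x_h$ to converge to a constant loop that is an equilibrium of energy~$\kappa$; but for the magnetic Lagrangian $L(x,v)=\tfrac12|v|^2+\theta_x(v)$ every constant loop has energy~$0$, contradicting $\kappa>0$. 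This step genuinely uses the specific form of $L$ and is the actual content of the lemma beyond what is already in \cite{con06}.

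Once $T_h$ is bounded away from zero, the paper simply quotes \cite[Proposition~3.12]{con06}. Your weak-to-strong $H^1$ argument is a reasonable outline of how that proposition is established, and the magnetic term is indeed lower order as you say. So the second half of your proposal is a correct (if sketchy) unpacking of a result the paper takes as a black box; the missing ingredient is the first half.
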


\begin{proof}
The sequence $(T_h)$ is bounded away from zero: if not, Proposition 3.8 in \cite{con06} would imply that a subsequence of $(x_h)$ converges to a constant loop which is an equilibrium orbit with energy $\kappa$. But in the case of a magnetic Lagrangian, all constant loops are equilibrium orbits with zero energy.
The results now follows from the fact that
Palais-Smale sequences $(x_h,T_h)$ with $(T_h)$ bounded and bounded away from zero have a converging subsequence by \cite[Proposition 3.12]{con06}.
\end{proof}

The above result implies in particular that the Palais-Smale condition holds locally, and this allows to prove that the $\T$-orbit of a strict local minimizer $\alpha$ has neighborhoods on whose boundary the infimum of $\S_{\kappa}$ is strictly larger than $\S_{\kappa}(\alpha)$:

\begin{Lemma}  
\label{strict}
Let $\alpha=(x,T)$ be a strict local minimizer of $\S_{\kappa}$ on $\mathcal{M}$. If the neighborhood $\mathcal{U}$ of $\T\cdot \alpha$ is sufficiently small, then
\[
\inf_{\partial \mathcal{U}} \S_{\kappa} > \S_{\kappa}(\alpha).
\]
\end{Lemma}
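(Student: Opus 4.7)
The plan is to argue by contradiction, producing a second critical point of $\S_\kappa$ at the same action level as $\alpha$ via Ekeland's variational principle and the local Palais--Smale property of Lemma \ref{gon2}. Strict local minimality provides a neighborhood $\mathcal{V}$ of $\T\cdot\alpha$ on which $\S_\kappa > \S_\kappa(\alpha)$ off $\T\cdot\alpha$. For $r>0$ I will write $B_r := \set{(y,S) \in \mathcal{M}}{\mathrm{dist}((y,S),\T\cdot\alpha) < r}$, where $\mathrm{dist}$ is the Riemannian distance on $\mathcal{M}$, and fix $r_0 > 0$ so small that $\overline{B_{2r_0}} \subset \mathcal{V}$ and $T - 2r_0 > 0$. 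It then suffices to prove the asserted inequality for $\mathcal{U} = B_{r_0}$.

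Suppose by contradiction that $\inf_{\partial B_{r_0}} \S_\kappa = \S_\kappa(\alpha)$ and pick $(y_n,S_n) \in \partial B_{r_0}$ with $\S_\kappa(y_n,S_n) < \S_\kappa(\alpha) + n^{-2}$. Set $K := \overline{B_{2r_0}} \setminus B_{r_0/4}$. Since the $S$-coordinate of any point of $K$ lies in $[T-2r_0, T+2r_0]$, the weight $\varphi(S)$ is bounded away from zero on $K$, and the Riemannian distance of $\mathcal{M}$ restricts to a complete metric on the closed set $K$. Moreover $\inf_K \S_\kappa = \S_\kappa(\alpha)$, since $\partial B_{r_0} \subset K$ and the inclusion $K \subset \mathcal{V}$ forces $\S_\kappa \geq \S_\kappa(\alpha)$ throughout $K$. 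Applying Ekeland's variational principle on $K$ at $(y_n,S_n)$ with $\varepsilon = n^{-2}$ and $\lambda = r_0/2$ yields $(\tilde y_n,\tilde S_n) \in K$ satisfying $\S_\kappa(\tilde y_n,\tilde S_n) \leq \S_\kappa(y_n,S_n)$, $\mathrm{dist}((\tilde y_n,\tilde S_n),(y_n,S_n)) \leq r_0/2$, and
\[
\S_\kappa(u) \geq \S_\kappa(\tilde y_n,\tilde S_n) - \frac{2}{r_0 n^2}\,\mathrm{dist}\bigl(u,(\tilde y_n,\tilde S_n)\bigr) \qquad \forall\, u \in K.
\]

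The triangle inequality then forces $r_0/2 \leq \mathrm{dist}((\tilde y_n,\tilde S_n),\T\cdot\alpha) \leq 3r_0/2$, so $(\tilde y_n,\tilde S_n)$ lies in the interior of $K$ inside $\mathcal{M}$; the Ekeland inequality therefore upgrades to the gradient bound $\|\nabla \S_\kappa(\tilde y_n,\tilde S_n)\| \leq 2/(r_0 n^2) \to 0$. Thus $(\tilde y_n,\tilde S_n)$ is a Palais--Smale sequence with bounded period coordinate, and Lemma \ref{gon2} extracts a subsequential limit $(y_*,S_*)$ which is a critical point of $\S_\kappa$ with $\S_\kappa(y_*,S_*) = \S_\kappa(\alpha)$. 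Since its distance to $\T\cdot\alpha$ lies in $[r_0/2,3r_0/2]$, the point $(y_*,S_*)$ belongs to $\mathcal{V} \setminus \T\cdot\alpha$, contradicting the defining property of a strict local minimizer.

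The main obstacle will be ensuring that the Ekeland iterate lands in the \emph{interior} of the annular region, since only there does the almost-minimum inequality translate into a usable gradient bound; this is why the thick annulus $K = \overline{B_{2r_0}} \setminus B_{r_0/4}$ is preferred over the natural $\overline{B_{r_0}} \setminus B_{r_0/2}$, with the Ekeland displacement $\lambda = r_0/2$ sitting strictly inside the collar. A secondary subtlety, imposed by the deliberate incompleteness of the Riemannian structure on $\mathcal{M}$ as $T \to 0$, is that Ekeland must be applied inside a region where $\varphi(S)$ stays bounded away from zero; this is handled automatically by the choice $T - 2r_0 > 0$.
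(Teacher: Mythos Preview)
Your argument is correct and reaches the same contradiction as the paper, but via a different mechanism for producing the Palais--Smale sequence. The paper runs the negative gradient flow of $\S_\kappa$ for a uniform time $\rho>0$ on a minimizing sequence $(\gamma_h)\subset\partial\mathcal U$: boundedness of $\nabla\S_\kappa$ near $\T\cdot\alpha$ traps the flow inside a fixed annulus $\overline{\mathcal N_\delta}\setminus \mathcal N_{\epsilon/2}$, and the standard decay estimate $\min_{[0,\rho]}\|d\S_\kappa(\phi_r(\gamma_h))\|^2 \le \rho^{-1}\bigl(\S_\kappa(\gamma_h)-\S_\kappa(\alpha)\bigr)$ then extracts an almost-critical point there, to which Lemma~\ref{gon2} is applied. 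You bypass the flow entirely and obtain the almost-critical point directly from Ekeland's variational principle on the complete annulus $K$. Your route is somewhat more self-contained (it does not rely on Lemma~\ref{gon1} or on the gradient flow being globally defined), at the price of having to verify completeness of $(K,d)$; the paper's route has the advantage that the gradient-flow apparatus is already in place for the later minimax deformation arguments, so no new tool is introduced. One quibble: the lemma as phrased asserts the inequality for \emph{every} sufficiently small neighborhood $\mathcal U$, and the paper's proof does establish this for arbitrary $\mathcal U \subset \mathcal N_{\delta/2}$, whereas you treat only the metric ball $B_{r_0}$. This already suffices for the sole application (Lemma~\ref{crita} needs just one such $\mathcal U$), and in any case your argument adapts painlessly to arbitrary $\mathcal U \subset B_{r_0}$ by choosing $\epsilon>0$ with $B_\epsilon \subset \mathcal U$ and running Ekeland on $\overline{B_{2r_0}}\setminus B_{\epsilon/4}$ with displacement $\lambda=\epsilon/2$.
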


\begin{proof}
Denote by $\mathcal{N}_{\delta}$ the open $\delta$-neighborhood of the set $\T \cdot \alpha$ in $\mathcal{M}$. Let $\delta>0$ be so small that the gradient of $\S_{\kappa}$ is bounded on $\overline{\mathcal{N}_{\delta}}$ and there holds
\begin{equation}
\label{deltasmall}
\S_{\kappa}(\gamma) > \S_{\kappa}(\alpha) \qquad \forall \gamma \in \overline{\mathcal{N}_{\delta}} \setminus \T \cdot \alpha.
\end{equation}
Let $\mathcal{U}$ be a neighborhood of $\T \cdot \alpha$ which is contained in $\mathcal{N}_{\delta/2}$, and assume by contradiction that
\[
\inf_{\partial \mathcal{U}} \S_{\kappa} = \S_{\kappa}(\alpha).
\]
Let $\epsilon>0$ be such that
\[
\partial \mathcal{U} \subset \overline{N_{\delta/2}} \setminus \mathcal{N}_{\epsilon}.
\]
Let $\phi$ be the negative gradient flow of $\S_{\kappa}$. Since the gradient of $\S_{\kappa}$ is bounded on $\overline{\mathcal{N}_{\delta}}$, the above inclusion implies the existence of a positive number $\rho$ such that
\[
\phi \bigl( [0,\rho] \times \partial \mathcal{U} \bigr) \subset \overline{\mathcal{N}_{\delta}} \setminus \mathcal{N}_{\epsilon/2}.
\]
Let $(\gamma_h)\subset \partial \mathcal{U}$ be a sequence such that $(\S_{\kappa}(\gamma_h))$ converges to $\S_{\kappa}(\alpha)$. 
Since 
\[
\begin{split}
\min_{r\in [0,\rho]} \bigl\| d\S_{\kappa} (\phi_r(\gamma_h))\bigr\|^2 &\leq \frac{1}{\rho} \int_0^{\rho} \bigl\| d\S_{\kappa} (\phi_r(\gamma_h))\bigr\|^2\, dr = - \frac{1}{\rho} \int_0^{\rho} \frac{d}{dr} \S_{\kappa} (\phi_r(\gamma_h))\, dr \\ &= \frac{1}{\rho} \bigl(
\S_{\kappa}(\gamma_h) - \S_{\kappa}(\phi_{\rho}(\gamma_h)) \bigr) \leq \frac{1}{\rho} \bigl(
\S_{\kappa}(\gamma_h) - \S_{\kappa}(\alpha)\bigr) \rightarrow 0,
\end{split}
\]
the functional $\S_{\kappa}$ has a Palais-Smale sequence 
\[
\tilde{\gamma}_h = \phi(\rho_h,\gamma_h) \in \overline{\mathcal{N}_{\delta}} \setminus \mathcal{N}_{\epsilon/2}, \qquad \rho_h \in [0,\rho],
\]
such that $(\S_{\kappa}(\tilde{\gamma}_h))$ converges to $\S_{\kappa}(\alpha)$. By Lemma \ref{gon2} we deduce the existence of a critical point 
\[
\gamma\in \overline{\mathcal{N}_{\delta}} \setminus \mathcal{N}_{\epsilon/2},
\]
such that $\S_{\kappa}(\gamma) = \S_{\kappa}(\alpha)$. This contradicts (\ref{deltasmall}).
\end{proof}

\section{The minimax values} 

For any $\kappa\in (0,c_u)\subset (0,c_0)$ let $\alpha_{\kappa}\in \mathcal{M}$ be a local minimizer of $\S_{\kappa}$ with $\S_{\kappa}(\alpha_{\kappa})<0$, whose existence is guaranteed by Lemma \ref{lemma:consecCMP}. 
Let $P\subset (0,c_u)$ be the set of values of $\kappa$ for which the local minimizer $\alpha_{\kappa}$ is strict, and let $Q\subset P$ be the set of values $\kappa$ for which $\alpha_{\kappa}$ is transversally non-degenerate (see the Introduction and Section \ref{Sec:meanindex} for the definition). By Lemma \ref{persistence}, $\alpha_{\kappa}^n$ is a strict local minimizer for every $\kappa\in P$ and every $n\in \N$ (when $\kappa\in Q$, $\alpha_{\kappa}^n$ may not be transversally non-degenerate for $n\geq 1$, because the linearized Poincar\'e map of $\alpha_{\kappa}$ may have eigenvalues which are roots of 1).

We shall define two minimax values: the first one is associated to energies close to a value $\kappa_*\in P$, the second one is associated to energies close to a value $\kappa_*\in Q$.

We begin with the case of an energy level $\kappa_*$ in $P$. Since $\kappa_*< c_u$, the infimum of $\S_{\kappa_*}$ over all contractible curves is $-\infty$, so we can find an element $\mu \in \mathcal{M}$ in the same free homotopy class of $\alpha_{\kappa_*}$  such that
\[
\S_{\kappa_*} (\mu) < \S_{\kappa_*}(\alpha_{\kappa_*})<0.
\]
Since $\S_{\kappa} \rightarrow \S_{\kappa_*}$ pointwise for $\kappa \rightarrow \kappa_*$, we can find an open interval $I \subset (0,c_u)$ containing $\kappa_*$ such that
\begin{equation}
\label{s2}
\S_{\kappa}(\mu) <  \S_{\kappa} (\alpha_{\kappa_*})<0, \qquad \forall \kappa\in I.
\end{equation}
For every $n\in \N$ we set
\[
\mathcal{P}_n := \set{u\in C^0([0,1], \mathcal{M}) }{ u(0) = \alpha_{\kappa_*}^n, \; u(1) = \mu^n},
\]
and we define a function $p_n : I \rightarrow \R$ by
\[
p_n(\kappa) := \inf_{u\in \mathcal{P}_n} \max_{\sigma\in [0,1]} \S_{\kappa}\bigl(u(\sigma)\bigr).
\]
Since $\S_{\kappa}$ depends monotonically on $\kappa$, the function $\kappa \mapsto p_n(\kappa)$ is (not necessarily strictly) increasing for every $n\in \N$. 

Now let $\kappa_*\in Q$. Therefore, the kernel of $d^2 \S_{\kappa_*} (\alpha_{\kappa_*})$ is one-dimensional and coincides with the tangent line to $\T\cdot \alpha_{\kappa_*}$ at $\alpha_{\kappa_*}$. Let $I\subset  (0,c_u)$ be an open interval containing $\kappa_*$ for which there is a smooth orbit cylinder $\{\tilde{\alpha}_{\kappa}\}_{\kappa\in I}$ with $\tilde{\alpha}_{\kappa_*} = \alpha_{\kappa_*}$: $\tilde{\alpha}_{\kappa}$ is a $T(\kappa)$-periodic orbit of energy $\kappa$, with $T'(\kappa_*) \neq 0$. The closed curve $\tilde{\alpha}_{\kappa}$ may or may not coincide with $\alpha_{\kappa}$.
Up to reducing $I$, we may assume that $\tilde{\alpha}_{\kappa}$ is a transversally non-degenerate local minimizer of $\S_{\kappa}$ for every $\kappa\in I$. 

Since $\kappa_*<c_u$ we can find a closed curve $\mu\in \mathcal{M}$ in the same free homotopy class of $\alpha_{\kappa_*}$ such that
\[
\S_{\kappa_*} (\mu) < \S_{\kappa_*} (\alpha_{\kappa_*})<0.
\]
Up to reducing $I$ even more, we may assume that
\begin{equation}
\label{sotto}
\S_{\kappa} (\mu) < \S_{\kappa} (\tilde{\alpha}_{\kappa})< 0 \qquad \forall \kappa\in I.
\end{equation}

For every $\kappa\in I$ and every $n\in \N$ we set
\[
\mathcal{Q}_n (\kappa) := \set{u\in C^0([0,1],\mathcal{M})}{u(0)=\tilde{\alpha}_{\kappa}^n, \; u(1) = \mu^n},
\]
and we define a function $q_n: I \rightarrow \R$ by
\[
q_n(\kappa) := \inf_{u\in \mathcal{Q}_n(\kappa)} \max_{\sigma\in [0,1]} \S_{\kappa}\bigl(u(\sigma)\bigr) .
\]
Notice that, unlike $\mathcal{P}_n$, the class $\mathcal{Q}_n(\kappa)$ depends on the energy level $\kappa$. Therefore, the monotonicity of $q_n$ does not follow directly from the monotonicity of $\kappa \mapsto \S_{\kappa}$, but requires a proof:

\begin{Lemma}
The function $q_n: I \rightarrow \R$ is monotonically increasing.
\end{Lemma}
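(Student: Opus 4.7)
The plan is to show that for any $\kappa_1<\kappa_2$ in $I$ and any admissible path $u\in\mathcal{Q}_n(\kappa_2)$, one can build a competitor $v\in\mathcal{Q}_n(\kappa_1)$ whose maximum $\S_{\kappa_1}$-value is bounded by $\max_{\sigma}\S_{\kappa_2}(u(\sigma))$; taking the infimum over $u$ then gives $q_n(\kappa_1)\leq q_n(\kappa_2)$. The only obstruction to using monotonicity of $\S_{\kappa}$ in $\kappa$ directly is the moving endpoint $\tilde{\alpha}_{\kappa}^n$, which I would handle by prepending a short path along the orbit cylinder.

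Concretely, using the smooth orbit cylinder $\{\tilde{\alpha}_{\kappa}\}_{\kappa\in I}$ (and so the smooth family $\{\tilde{\alpha}_{\kappa}^{n}\}_{\kappa\in I}$ of iterates), I would define
\[
v(\sigma):=\begin{cases} \tilde{\alpha}_{\kappa_1+2\sigma(\kappa_2-\kappa_1)}^{n}, & \sigma\in[0,1/2],\\ u(2\sigma-1), & \sigma\in[1/2,1].\end{cases}
\]
This is continuous and satisfies $v(0)=\tilde{\alpha}_{\kappa_1}^{n}$, $v(1)=\mu^n$, so $v\in\mathcal{Q}_n(\kappa_1)$. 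On the second half, monotonicity of $\kappa\mapsto \S_{\kappa}(x,T)=\S_0(x,T)+\kappa T$ immediately yields $\S_{\kappa_1}(u(2\sigma-1))\leq \S_{\kappa_2}(u(2\sigma-1))$, which is dominated by $\max_{\sigma}\S_{\kappa_2}(u(\sigma))$.

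The essential step is bounding $\S_{\kappa_1}(\tilde{\alpha}_{\kappa}^{n})=n\S_{\kappa_1}(\tilde{\alpha}_{\kappa})$ uniformly for $\kappa\in[\kappa_1,\kappa_2]$. First, the same pointwise monotonicity gives
\[
\S_{\kappa_1}(\tilde{\alpha}_{\kappa})=\S_{\kappa}(\tilde{\alpha}_{\kappa})-(\kappa-\kappa_1)T(\kappa)\leq \S_{\kappa}(\tilde{\alpha}_{\kappa}).
\]
Second, because $\tilde{\alpha}_{\kappa}$ is a critical point of $\S_{\kappa}$ in $\mathcal{M}$, the envelope identity
\[
\frac{d}{d\kappa}\S_{\kappa}(\tilde{\alpha}_{\kappa})=\partial_{\kappa}\S_{\kappa}(\tilde{\alpha}_{\kappa})=T(\kappa)>0
\]
shows that $\kappa\mapsto \S_{\kappa}(\tilde{\alpha}_{\kappa})$ is strictly increasing, so $\S_{\kappa}(\tilde{\alpha}_{\kappa})\leq \S_{\kappa_2}(\tilde{\alpha}_{\kappa_2})$ for $\kappa\leq \kappa_2$. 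Combining the two inequalities yields
\[
\S_{\kappa_1}(\tilde{\alpha}_{\kappa}^{n})\leq n\,\S_{\kappa_2}(\tilde{\alpha}_{\kappa_2})=\S_{\kappa_2}(u(0))\leq \max_{\sigma\in[0,1]}\S_{\kappa_2}(u(\sigma)).
\]

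Putting the two pieces together gives $\max_{\sigma}\S_{\kappa_1}(v(\sigma))\leq \max_{\sigma}\S_{\kappa_2}(u(\sigma))$, hence $q_n(\kappa_1)\leq \max_{\sigma}\S_{\kappa_2}(u(\sigma))$ for every $u\in\mathcal{Q}_n(\kappa_2)$; infimizing over $u$ finishes the proof. The only potentially delicate point is justifying the envelope formula $\frac{d}{d\kappa}\S_{\kappa}(\tilde{\alpha}_{\kappa})=T(\kappa)$, but this is immediate because $\tilde{\alpha}_{\kappa}$ is a critical point of $\S_{\kappa}$ so the tangential term in the total derivative vanishes, leaving only the explicit $\kappa$-dependence of the integrand.
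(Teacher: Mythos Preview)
Your proof is correct and follows essentially the same approach as the paper's: prepend to a given path $u\in\mathcal{Q}_n(\kappa_2)$ the segment of the orbit cylinder from $\tilde{\alpha}_{\kappa_1}^n$ to $\tilde{\alpha}_{\kappa_2}^n$, then combine pointwise monotonicity of $\kappa\mapsto\S_\kappa$ with the envelope identity $\frac{d}{d\kappa}\S_\kappa(\tilde{\alpha}_\kappa)=T(\kappa)>0$ to bound the maximum. The paper's argument is identical in structure and in the key estimates.
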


\begin{proof}
Let $\kappa_0 < \kappa_1$ be elements of $I$. For every $u\in \mathcal{Q}_n(\kappa_1)$ we define the path $v\in \mathcal{Q}_n(\kappa_0)$ as
\[
v(\sigma) := \left\{ \begin{array}{ll} \tilde{\alpha}^n_{\kappa_0 + 2\sigma (\kappa_1 - \kappa_0)} , & \mbox{if } 0\leq \sigma \leq 1/2, \\ u(2\sigma -1), & \mbox{if } 1/2 < \sigma \leq 1. \end{array} \right.
\]
Since $\tilde{\alpha}_{\kappa}$ is a critical point of $\S_{\kappa}$, we find
\[
\frac{d}{d\kappa} \S_{\kappa} (\tilde{\alpha}_{\kappa}) = d \S_{\kappa} (\tilde{\alpha}_{\kappa}) \Bigl[ \frac{\partial \tilde{\alpha}_{\kappa}}{\partial \kappa}  \Bigr] + \frac{\partial \S_{\kappa}}{\partial \kappa} (\tilde{\alpha}_{\kappa}) = \frac{\partial \S_{\kappa}}{\partial \kappa} (\tilde{\alpha}_{\kappa}) = T(\kappa)>0.
\]
Therefore we have for $\kappa\in [\kappa_0,\kappa_1]$
\[
\S_{\kappa_0} (\tilde{\alpha}_{\kappa}^n) = n
\S_{\kappa_0} (\tilde{\alpha}_{\kappa}) \leq n \S_{\kappa} (\tilde{\alpha}_{\kappa}) \leq n \S_{\kappa_1} (\tilde{\alpha}_{\kappa_1}) = \S_{\kappa_1} (\tilde{\alpha}_{\kappa_1}^n) = \S_{\kappa_1}(u(0)),
\]
from which we obtain
\[
\max_{\sigma \in [0,1]} \S_{\kappa_0} (v(\sigma)) \leq \max_{\sigma \in [0,1]} \S_{\kappa_1} (u(\sigma)).
\]
By taking the infimum over all $u\in \mathcal{Q}_n(\kappa_1)$ we conclude that $q_n(\kappa_0) \leq q_n(\kappa_1)$.
\end{proof}

The following lemma is based on an argument which is due to V.\ Bangert \cite{ban80}:

\begin{Lemma}
\label{bangert}
Let $\mu_0,\mu_1\in \mathcal{M}$ be in the same free homotopy class, and let
\[
\mathcal{R}_n := \set{u\in C^0([0,1], \mathcal{M})}{u(0) = \mu_0^n, \; u(1)=\mu_1^n}.
\]
Fix a number $\kappa$ and set
\[
c_n := \inf_{u\in \mathcal{R}_n} \max_{\sigma\in [0,1]} \S_{\kappa}(u(\sigma)).
\]
Then there exists a number $A$ such that
\[
c_n \leq n \, \max\{\S_{\kappa}(\mu_0), \S_{\kappa}(\mu_1)\} + A\qquad \forall n\in \N.
\]
\end{Lemma}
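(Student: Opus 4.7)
The plan is to exploit the fact that $\mu_0$ and $\mu_1$ lie in the same free homotopy class, and therefore belong to the same path component of $\mathcal{M}$. Fix once and for all a continuous path $v \colon [0,1] \to \mathcal{M}$ with $v(0) = \mu_0$ and $v(1) = \mu_1$, and set $M := \max_{\sigma \in [0,1]} \S_{\kappa}(v(\sigma))$, which is finite by continuity and compactness. The strategy, due to Bangert, is to build an admissible path $u_n \in \mathcal{R}_n$ by a ``staircase'': starting from $\mu_0^n$, replace one copy of $\mu_0$ at a time with a copy of $\mu_1$, using $v$ for the single copy being modified while keeping the other $n-1$ copies unchanged. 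At each instant the action then decomposes into a contribution of at most $(n-1)\max\{\S_{\kappa}(\mu_0), \S_{\kappa}(\mu_1)\}$ from the $n-1$ inert copies and a contribution of at most $M$ from the copy currently being transformed along $v$, which is exactly why an $n$-independent error is achievable.

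Concretely, partition $[0,1]$ into $n$ equal subintervals and, on the $k$-th subinterval, set $\tau := n\sigma - (k-1) \in [0,1]$ and define $u_n(\sigma) \in \mathcal{M}$ to be the closed curve obtained by juxtaposing $(n-k)$ copies of $\mu_0$, then the loop $v(\tau)$, then $(k-1)$ copies of $\mu_1$. Since $v(0) = \mu_0$ and $v(1) = \mu_1$, consecutive subpaths match at their endpoints, giving a continuous path with $u_n(0) = \mu_0^n$ and $u_n(1) = \mu_1^n$. By additivity of $\S_{\kappa}$ under concatenation,
\[
\S_{\kappa}(u_n(\sigma)) = (n-k)\S_{\kappa}(\mu_0) + \S_{\kappa}(v(\tau)) + (k-1)\S_{\kappa}(\mu_1) \leq (n-1)\max\{\S_{\kappa}(\mu_0), \S_{\kappa}(\mu_1)\} + M
\]
uniformly in $k \in \{1, \ldots, n\}$ and $\tau \in [0,1]$. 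Consequently
\[
c_n \leq \max_{\sigma \in [0,1]} \S_{\kappa}(u_n(\sigma)) \leq n\max\{\S_{\kappa}(\mu_0), \S_{\kappa}(\mu_1)\} + A
\]
with the $n$-independent constant $A := M - \max\{\S_{\kappa}(\mu_0), \S_{\kappa}(\mu_1)\}$.

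The main obstacle is making the ``juxtaposition'' rigorous as an operation in $\mathcal{M} = H^1(\T,M) \times (0,+\infty)$, since concatenating closed curves only produces an $H^1$ loop when their base points coincide. The plan is to handle this by first using the $\T$-action on $\mathcal{M}$ (which preserves both $\S_{\kappa}$ and the path components) to align $\mu_0(0)$ and $\mu_1(0)$ at a common point $p \in M$, and then adjusting the reference path $v$ so that every loop $v(\tau)$ is also based at $p$. If $v$ does not automatically preserve the base point, the trajectory $\tau \mapsto v(\tau)(0)$ is a continuous path in $M$, and one corrects $v$ by inserting short reversed bridge arcs following that trajectory; this adds an action cost that is uniformly bounded (depending only on the fixed data $\mu_0$, $\mu_1$, $v$, and $\kappa$) and gets absorbed into a slightly enlarged $A$. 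With this routine bookkeeping in place the staircase path lies in $\mathcal{R}_n$ and delivers the claimed inequality.
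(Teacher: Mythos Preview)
Your approach is the same as the paper's---Bangert's ``pull one loop at a time'' staircase---and the action estimate is correct once the concatenation is set up properly.

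The one concrete error is in the basepoint bookkeeping: you assert that the $\T$-action can be used to align $\mu_0(0)$ and $\mu_1(0)$ at a common point $p$. The $\T$-action only moves the basepoint of $\mu_i$ along the image of $\mu_i$, and two freely homotopic closed curves on $M$ need not have intersecting images, so this step can simply fail. The bridge-arc device you mention at the end is the right fix, but it must also handle the $\mu_0/\mu_1$ basepoint mismatch, not just the intermediate loops $v(\tau)$. This is exactly what the paper does: fixing $u=(x,T)\in\mathcal{R}_1$ and writing $\tau(\sigma):=x(\sigma)(0)$ for the basepoint curve in $M$ (which runs from $\mu_0(0)$ to $\mu_1(0)$), a typical intermediate loop in the homotopy is the juxtaposition
\[
\mu_0^h \;\cdot\; \tau|_{[0,s]} \;\cdot\; \gamma_s \;\cdot\; \tau|_{[s,1]} \;\cdot\; \mu_1^{\,n-h-1} \;\cdot\; \hat\tau,
\]
with $\hat\tau$ the reverse of $\tau$ closing the loop back at $\mu_0(0)$. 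The bridge contributions $\S_\kappa(\tau|_{[0,s]})$, $\S_\kappa(\tau|_{[s,1]})$, $\S_\kappa(\hat\tau)$ are bounded independently of $n$ and get absorbed into $A$. With this correction in place, your argument coincides with the paper's.
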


\begin{proof}
Let $u=(x,T)\in \mathcal{R}_1$ be such that the curve 
\[
\tau: [0,1] \rightarrow M, \qquad \tau(\sigma):= x(\sigma)(0)
\]
is smooth and let $\hat\tau$ be the inverse curve, $\hat{\tau}(\sigma) := \tau(1-\sigma)$.

Let $n$ be a natural number. Let $v\in \mathcal{R}_n$ be the homotopy connecting $\mu_0^n$ to $\mu_1^n$ which is obtained from $u$ by pulling one loop at a time (see \cite[Fig.\ 1]{ban80}). That is, for a typical value of $\sigma$ in $[0,1]$, $v(\sigma)$ is obtained as the juxtaposition of the following curves:
\begin{equation}
\label{center}
\begin{split}
\mu_0^h &: [0,hT_0] \rightarrow M, \\
\tau|_{[0,s]} &: [0,s] \rightarrow M, \\
\gamma_s &: [0,T_s] \rightarrow M, \\
\tau|_{[s,1]} &: [s,1] \rightarrow M, \\
\mu_1^{n-h-1} &: [0,(n-h-1)T_1] \rightarrow M, \\
\hat\tau &:[0,1] \rightarrow M,
\end{split}
\end{equation}
for some natural number $h=h(\sigma)\leq n-1$ and for some real number $s=s(\sigma) \in [0,1]$. For other values of $\sigma\in [0,1]$ the form of the curve is different, because the first loop $\mu_0$ has still to be transported to $\mu_1$, or all the loops have been transported to $\mu_1^n$ and there is still to eliminate the curves $\tau$ and $\hat\tau$. See \cite[proof of Theorem 1]{bk83} for the precise construction. 

Notice that here we never reparametrize the curves, we just juxtapose them by shifting their original parametrization. The free period action functional $\S_{\kappa}$ is additive with respect to such a way of juxtaposing curves, therefore the action of the curve $v(\sigma)$ defined by (\ref{center}) is
\[
\S_{\kappa} \bigl( v(\sigma) \bigr) = h \S_{\kappa} (\mu_0) + \S_{\kappa} (\tau|_{[0,s]}) + \S_{\kappa} (\gamma_s) + (n-h-1) \S_{\kappa} (\mu_1) + \S_{\kappa} ( \hat\tau ).
\] 
This quantity can be bounded from above by
\[
\S_{\kappa} \bigl( v(\sigma) \bigr) \leq n \, \max\{ \S_{\kappa} (\mu_0) , \S_{\kappa} (\mu_1)\} + A ,
\]
where the number
\[
A:= - \min\{ \S_{\kappa} (\mu_0),  \S_{\kappa} (\mu_1)\} + \max_{s\in [0,1]} \S_{\kappa} (\tau|_{[0,s]}) + \S_{\kappa} (\hat\tau) + \max_{s\in [0,1]} \S_{\kappa} (\gamma_s) 
\]
does not depend on $n$. The thesis follows.
\end{proof}

Let us consider again the case $\kappa_*\in P$. Fix a number $\kappa^*\in I$ such that $\kappa^*>\kappa_*$. If we apply the above lemma to $\mu_0 = \alpha_{\kappa_*}$, $\mu_1=\mu$ and $\kappa = \kappa^*$, by (\ref{s2}) we can find an integer $n_0$ such that
\[
p_n(\kappa^*) < 0 \qquad \forall n\geq n_0.
\]
By the monotonicity of $p_n$, we deduce that, up to the replacement of $I$ by the interval $I\cap (0,\kappa^*)$, we may assume that
\[
p_n(\kappa) < 0 \qquad \forall \kappa\in I, \forall n\geq n_0.
\]
Since $\alpha_{\kappa_*}^{n_0}$ is a strict local minimizer of $\S_{\kappa_*}$, by Lemma \ref{strict} we can find a bounded neighborhood $\mathcal{U}\subset \mathcal{M}$ of $\T\cdot \alpha_{\kappa_*}^{n_0}$ which does not contain $\mu^{n_0}$ and such that
\[
\inf_{\partial \mathcal{U}} \S_{\kappa_*} > \S_{\kappa_*} (\alpha_{\kappa_*}^{n_0}).
\]
Since $\S_{\kappa} \rightarrow \S_{\kappa_*}$ uniformly on bounded sets for $\kappa \rightarrow \kappa_*$, up to reducing the interval $I$ even more we may assume that 
\[
\inf_{\partial \mathcal{U}} \S_{\kappa} > \S_{\kappa} (\alpha_{\kappa_*}^{n_0}), \qquad \forall \kappa\in I.
\]
Since any path belonging to $\mathcal{P}_{n_0}$ must cross $\partial \mathcal{U}$, the above inequality implies that
\begin{equation}
\label{prima}
p_{n_0} (\kappa) > \S_{\kappa} (\alpha_{\kappa_*}^{n_0}), \qquad \forall \kappa\in I.
\end{equation}
Recalling also (\ref{s2}), we have proved the following:

\begin{Lemma}
\label{crita}
For any $\kappa_* \in P$ there is a natural number $n_0$ and an open interval $I\subset (0,c_u)$ containing $\kappa_*$ such that
\[
\S_{\kappa}(\mu^{n_0}) < \S_{\kappa} (\alpha_{\kappa_*}^{n_0}) < p_{n_0} (\kappa) < 0
\]
for every $\kappa$ in $I$.
\end{Lemma}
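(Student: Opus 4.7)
The plan is to build $n_0$ and $I$ in two stages and establish the three strict inequalities separately, starting from the set-up already in place.

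Starting point. I would first shrink the interval $I$ appearing before \eqref{s2} to an initial candidate $I_0 \subset (0,c_u)$ containing $\kappa_*$ on which
\[
\S_{\kappa}(\mu) < \S_{\kappa}(\alpha_{\kappa_*}) < 0.
\]
Since the free-period action is additive under juxtaposition of closed curves, this inequality multiplies out to $\S_{\kappa}(\mu^{n_0}) < \S_{\kappa}(\alpha_{\kappa_*}^{n_0}) < 0$ for every natural $n_0$ and every $\kappa \in I_0$, taking care of the outer estimate in the statement.

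Negativity of $p_{n_0}$. Next I would invoke Lemma \ref{bangert} with $\mu_0 := \alpha_{\kappa_*}$, $\mu_1 := \mu$, and energy $\kappa^* \in I_0$ chosen strictly above $\kappa_*$. The lemma produces a constant $A$ independent of $n$ with
\[
p_n(\kappa^*) \leq n \, \max\{\S_{\kappa^*}(\alpha_{\kappa_*}),\S_{\kappa^*}(\mu)\} + A,
\]
and the maximum on the right is strictly negative by the previous step. Thus the right-hand side becomes negative for every $n$ above a threshold $n_0$. Since the class $\mathcal{P}_{n_0}$ does not depend on $\kappa$ and $\kappa \mapsto \S_{\kappa}$ is (non-strictly) monotone along each fixed path, $\kappa \mapsto p_{n_0}(\kappa)$ is monotone, so the bound $p_{n_0}(\kappa) < 0$ propagates to every $\kappa \in I_1 := I_0 \cap (0,\kappa^*)$. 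This fixes $n_0$ and shrinks the interval of energies to $I_1$.

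Mountain-pass gap $\S_{\kappa}(\alpha_{\kappa_*}^{n_0}) < p_{n_0}(\kappa)$. The hypothesis $\kappa_* \in P$ says that $\alpha_{\kappa_*}$ is a strict local minimizer of $\S_{\kappa_*}$, and Lemma \ref{persistence} upgrades this to strict local minimality of the iterate $\alpha_{\kappa_*}^{n_0}$ on $\mathcal{M}$. Lemma \ref{strict} then provides a bounded neighborhood $\mathcal{U}$ of $\T \cdot \alpha_{\kappa_*}^{n_0}$ in $\mathcal{M}$ with
\[
\inf_{\partial \mathcal{U}} \S_{\kappa_*} > \S_{\kappa_*}(\alpha_{\kappa_*}^{n_0});
\]
by shrinking $\mathcal{U}$ I may further require $\mu^{n_0} \notin \overline{\mathcal{U}}$, which is possible because the negativity gap already forces $\mu^{n_0} \notin \T \cdot \alpha_{\kappa_*}^{n_0}$. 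Since $\S_{\kappa} \to \S_{\kappa_*}$ uniformly on bounded subsets of $\mathcal{M}$ as $\kappa \to \kappa_*$, I can shrink $I_1$ to an open interval $I \subset I_1$ around $\kappa_*$ on which the analogous strict inequality holds with $\kappa$ in place of $\kappa_*$. Every path $u \in \mathcal{P}_{n_0}$ joins the interior point $\alpha_{\kappa_*}^{n_0}\in \mathcal{U}$ to the exterior point $\mu^{n_0}\notin \overline{\mathcal{U}}$ and therefore crosses $\partial \mathcal{U}$, so its action maximum satisfies $\max_{\sigma} \S_{\kappa}(u(\sigma)) \geq \inf_{\partial \mathcal{U}} \S_{\kappa} > \S_{\kappa}(\alpha_{\kappa_*}^{n_0})$; infimizing over $\mathcal{P}_{n_0}$ gives the missing middle inequality.

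Main obstacle. The one point that needs care is the quantifier ordering: $n_0$ must be chosen before $\mathcal{U}$, because Lemma \ref{strict} is applied to the specific iterate $\alpha_{\kappa_*}^{n_0}$, and the final interval $I$ is obtained by two successive contractions of $I_0$, first to secure $p_{n_0}(\kappa)<0$ via Bangert's estimate and monotonicity, and then to secure the mountain-pass gap via uniform convergence of $\S_{\kappa}$ on the bounded set $\overline{\mathcal{U}}$. Keeping these two contractions compatible is the only genuine bookkeeping step; nothing deeper is needed.
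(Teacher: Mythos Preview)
Your proposal is correct and follows essentially the same route as the paper: first use \eqref{s2} together with Lemma~\ref{bangert} at some $\kappa^*>\kappa_*$ and the monotonicity of $p_{n_0}$ to secure $p_{n_0}(\kappa)<0$ on $I\cap(0,\kappa^*)$, then use Lemmas~\ref{persistence} and \ref{strict} to obtain the bounded neighborhood $\mathcal{U}$ and the uniform convergence of $\S_{\kappa}$ on $\overline{\mathcal{U}}$ to get the strict mountain-pass gap \eqref{prima}. The paper's argument and yours match step for step, including the order in which $n_0$ is fixed before $\mathcal{U}$.
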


Now let $\kappa_*\in Q$. Applying Lemma \ref{bangert} to $\kappa = \kappa^*$, for some $\kappa^*\in I$, $\kappa^*>\kappa_*$, $\mu_0 = \tilde{\alpha}_{\kappa^*}$ and $\mu_1=\mu$, by (\ref{sotto}) we have
\[
\lim_{n\rightarrow \infty} q_n(\kappa^*) = -\infty.
\]
Since $q_n$ is increasing, up to the replacement of $I$ with the interval $I \cap (0,\kappa^*)$ we may assume that
\begin{equation}
\label{menoi}
\lim_{n\rightarrow \infty} q_n(\kappa) = -\infty \qquad \mbox{uniformly in } \kappa\in I,
\end{equation}
and in particular there exists an integer $n_0\geq 1$ such that
\[
q_n(\kappa) < 0 \qquad \forall \kappa\in I, \; \forall n\geq n_0.
\]
Let $\kappa\in I$. Since $\tilde{\alpha}_{\kappa}$ is a strict local minimizer, Lemma \ref{persistence} implies that $\tilde{\alpha}_{\kappa}^n$ is a strict local minimizer for every $n\in \N$. Therefore
\[
q_n(\kappa) > \S_{\kappa} (\tilde{\alpha}_{\kappa}^n) \qquad \qquad \forall \kappa\in I, \; \forall n\in \N.
\]
Notice that here, unlike in (\ref{prima}), the strict inequality between the minimax value and the value of the functional on one end-point of the paths in the mountain pass class holds for every $n$ and every $\kappa\in I$. In (\ref{prima}) instead, enlarging the set of $n$ for which the inequality is strict may force to reduce the interval $I$. Recalling also (\ref{sotto}), we have proved the following:

\begin{Lemma}
\label{critb}
Let $\kappa_*\in Q$. Then there is a natural number $n_0$ and an open interval $I\subset (0,c_u)$ containing $\kappa_*$ such that
\[
\S_{\kappa}(\mu^n) < \S_{\kappa} (\tilde{\alpha}_{\kappa}^n) < q_n (\kappa) < 0
\]
for every $\kappa$ in $I$ and every $n\geq n_0$.
\end{Lemma}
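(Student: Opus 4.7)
The plan is to mirror the two-step argument laid out just before Lemma \ref{crita}: first use Lemma \ref{bangert} together with the monotonicity of $q_n$ to push the minimax values uniformly below zero for all sufficiently large $n$, and then use the strict-local-minimizer property of $\tilde{\alpha}_{\kappa}^n$ (via Lemma \ref{persistence} and Lemma \ref{strict}) to obtain the two lower inequalities.

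For the upper bound, I would fix $\kappa^*\in I$ with $\kappa^*>\kappa_*$ and apply Lemma \ref{bangert} at energy $\kappa=\kappa^*$ to the freely homotopic pair $\mu_0=\tilde{\alpha}_{\kappa^*}$, $\mu_1=\mu$; with these choices the class $\mathcal{R}_n$ of Lemma \ref{bangert} coincides with $\mathcal{Q}_n(\kappa^*)$. Since (\ref{sotto}) gives $\S_{\kappa^*}(\tilde{\alpha}_{\kappa^*})<0$ and $\S_{\kappa^*}(\mu)<0$, Bangert's inequality $c_n\leq n\max\{\S_{\kappa^*}(\tilde{\alpha}_{\kappa^*}),\S_{\kappa^*}(\mu)\}+A$ forces $q_n(\kappa^*)\to-\infty$. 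Replacing $I$ by $I\cap(0,\kappa^*)$ and invoking the monotonicity of $q_n$ established in the previous lemma, I obtain $q_n(\kappa)\leq q_n(\kappa^*)$ for every $\kappa\in I$, hence $q_n(\kappa)\to-\infty$ uniformly in $\kappa\in I$. Choosing $n_0$ large yields $q_n(\kappa)<0$ for every $\kappa\in I$ and every $n\geq n_0$.

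For the two lower inequalities I would first note that transversal non-degeneracy of $\tilde{\alpha}_{\kappa}$ together with it being a local minimizer forces it to be strict, since the kernel of the Hessian of $\S_{\kappa}$ at $\tilde{\alpha}_{\kappa}$ reduces to the direction of the $\T$-orbit. Lemma \ref{persistence} then promotes $\tilde{\alpha}_{\kappa}^n$ to a strict local minimizer of $\S_{\kappa}$ on $\mathcal{M}$ for every $n\in\N$ and every $\kappa\in I$. Applying Lemma \ref{strict} I pick a bounded neighborhood $\mathcal{U}$ of $\T\cdot\tilde{\alpha}_{\kappa}^n$ which does not contain $\mu^n$ and satisfies $\inf_{\partial\mathcal{U}}\S_{\kappa}>\S_{\kappa}(\tilde{\alpha}_{\kappa}^n)$. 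Every $u\in\mathcal{Q}_n(\kappa)$ must cross $\partial\mathcal{U}$, so $\max_{\sigma}\S_{\kappa}(u(\sigma))\geq\inf_{\partial\mathcal{U}}\S_{\kappa}>\S_{\kappa}(\tilde{\alpha}_{\kappa}^n)$, and taking the infimum over $u$ yields the strict inequality $q_n(\kappa)>\S_{\kappa}(\tilde{\alpha}_{\kappa}^n)$. The innermost inequality $\S_{\kappa}(\mu^n)<\S_{\kappa}(\tilde{\alpha}_{\kappa}^n)$ is simply $n$ copies of (\ref{sotto}).

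The main contrast with Lemma \ref{crita}, and what makes the present argument cleaner, is that the strict-minimizer endpoint $\tilde{\alpha}_{\kappa}^n$ moves smoothly with $\kappa$ along the orbit cylinder rather than being a fixed iterate. Consequently I do not need a continuity argument of the type $\S_{\kappa}\to\S_{\kappa_*}$ uniformly on bounded sets to transport the barrier $\inf_{\partial\mathcal{U}}\S_{\kappa}>\S_{\kappa}(\tilde{\alpha}_{\kappa}^n)$ from $\kappa_*$ to nearby $\kappa$, and the strict lower bound $q_n(\kappa)>\S_{\kappa}(\tilde{\alpha}_{\kappa}^n)$ holds for every $n\in\N$ and every $\kappa\in I$ without any further shrinking of $I$. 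The only point worth checking carefully is that the neighborhood $\mathcal{U}$ produced by Lemma \ref{strict} may depend on the pair $(\kappa,n)$; this is harmless because the minimax value $q_n(\kappa)$ is defined pointwise in $(\kappa,n)$ and no uniform choice of $\mathcal{U}$ is required.
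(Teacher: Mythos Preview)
Your proposal is correct and follows essentially the same route as the paper: Bangert's lemma at $\kappa^*>\kappa_*$ combined with the monotonicity of $q_n$ for the upper bound $q_n(\kappa)<0$, and the strict-local-minimizer property of $\tilde{\alpha}_{\kappa}^n$ (via Lemma \ref{persistence} and the barrier from Lemma \ref{strict}) for the inequality $q_n(\kappa)>\S_{\kappa}(\tilde{\alpha}_{\kappa}^n)$. Your final paragraph correctly identifies the key structural difference from Lemma \ref{crita}, namely that the moving endpoint $\tilde{\alpha}_{\kappa}^n$ makes the strict lower bound hold for all $n$ and all $\kappa\in I$ without further shrinking of $I$; the paper makes exactly this observation as well.
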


\section{The monotonicity argument} 

The following lemma uses Struwe's monotonicity argument \cite{str90} (see also \cite[Proposition 7.1]{con06} for an application of this argument to the free period action functional) and replaces the classical deformation lemma.

\begin{Lemma}
\label{mono}
Let $\kappa_*\in P$ (resp.\ $\kappa_* \in Q$). Let $n_0$ and $I$ be as in Lemma \ref{crita} (resp.\ \ref{critb}). Let $\bar\kappa\in I$ be a point where the function
\[
c(\kappa) := p_{n_0}(\kappa) \qquad \mbox{(resp. } c(\kappa) := q_n(\kappa) \mbox{ for some } n\geq n_0 \mbox{)}
\]
has a linear modulus of right-continuity, that is there exist $\delta>0$ and $M>0$ such that
\begin{equation}
\label{linmod}
c(\kappa) - c(\bar{\kappa}) \leq M (\kappa - \bar{\kappa} ), \quad \forall \kappa \in [\bar{\kappa},\bar{\kappa}+\delta)\subset I.
\end{equation}
Then for every open neihborhood $\mathcal{U}$ of the set
\[
\mathrm{crit}\, \S_{\bar\kappa} \cap \{\S_{\bar\kappa} = c(\bar\kappa)\} 
\]
there exists an element $v$ of $\mathcal{P}_{n_0}$ (resp.\ of $\mathcal{Q}_n(\bar\kappa)$) such that
\[
v([0,1]) \subset \{\S_{\bar\kappa} < c(\bar\kappa)\} \cup \mathcal{U}.
\]
\end{Lemma}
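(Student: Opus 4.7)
The plan is to adapt Struwe's monotonicity argument from \cite{str90}, in the form used for the free-period action functional in \cite[Proposition 7.1]{con06}. The crucial point is that the hypothesis (\ref{linmod}) will control the periods of near-optimal paths, so that the lack of a global Palais--Smale condition reduces to a local issue, handled by Lemmas \ref{gon1} and \ref{gon2}.

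First, for each $\lambda\in(\bar\kappa,\bar\kappa+\delta)$ I would pick a path $u_\lambda$ in the mountain-pass class at $\lambda$ (namely $\mathcal{P}_{n_0}$ in the $P$ case, $\mathcal{Q}_n(\lambda)$ in the $Q$ case) with
\[
\max_{\sigma\in[0,1]}\S_{\lambda}(u_\lambda(\sigma))\le c(\lambda)+(\lambda-\bar\kappa)\le c(\bar\kappa)+(M+1)(\lambda-\bar\kappa).
\]
The identity $\S_{\lambda}(x,T)-\S_{\bar\kappa}(x,T)=(\lambda-\bar\kappa)T$ has two consequences: since $T>0$, the maximum of $\S_{\bar\kappa}$ along $u_\lambda$ is at most $c(\bar\kappa)+(M+1)(\lambda-\bar\kappa)$; and on the \emph{near-maximum band}
\[
B_\lambda:=\{\sigma\in[0,1]:\S_{\bar\kappa}(u_\lambda(\sigma))\ge c(\bar\kappa)-(\lambda-\bar\kappa)\}
\]
the same identity forces $T(u_\lambda(\sigma))\le M+2$.

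Next, I would turn $u_\lambda$ into a path $\tilde u_\lambda$ in the mountain-pass class at $\bar\kappa$ with the same upper bound on the $\S_{\bar\kappa}$-max. In the $P$ case there is nothing to do because $\mathcal{P}_{n_0}$ does not depend on $\lambda$. In the $Q$ case I prepend the orbit-cylinder segment $s\mapsto\tilde\alpha_{\bar\kappa+s(\lambda-\bar\kappa)}^n$ for $s\in[0,1]$; by continuity of the cylinder and Lemma \ref{critb}, for $\lambda$ close enough to $\bar\kappa$ this piece stays at $\S_{\bar\kappa}$-action strictly less than $c(\bar\kappa)$. In either case the two endpoints of $\tilde u_\lambda$ have $\S_{\bar\kappa}$-action strictly less than $c(\bar\kappa)$ by Lemmas \ref{crita}--\ref{critb}.

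Finally I would deform $\tilde u_\lambda$ by the negative gradient flow of $\S_{\bar\kappa}$ multiplied by a cut-off function in the action level which vanishes on $\{\S_{\bar\kappa}\le c(\bar\kappa)-\epsilon_0\}$, so that the endpoints remain frozen. Since $c(\bar\kappa)<0$, Lemma \ref{gon1} rules out $T\downarrow 0$ at critical points of this action level, while Lemma \ref{gon2} gives Palais--Smale compactness in the bounded-period region; hence
\[
K:=\mathrm{crit}\,\S_{\bar\kappa}\cap\{\S_{\bar\kappa}=c(\bar\kappa)\}\cap\{T\le M+2\}
\]
is compact. Shrinking the given $\mathcal{U}$ to an open neighborhood $\mathcal{U}_0\subset\mathcal{U}$ of $K$, one more application of Lemma \ref{gon2} yields $\eta,\epsilon_0>0$ such that $\|\nabla\S_{\bar\kappa}\|\ge\eta$ on $\{|\S_{\bar\kappa}-c(\bar\kappa)|\le\epsilon_0,\ T\le M+3\}\setminus\mathcal{U}_0$. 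Choosing $\lambda-\bar\kappa$ so small that $(M+1)(\lambda-\bar\kappa)<\epsilon_0/2$ and flowing for a time $t_0>0$ large enough to eat away an action deficit of $(M+1)(\lambda-\bar\kappa)$, every $\tilde u_\lambda(\sigma)$ either sits in $\mathcal{U}$ or else its flow line drops strictly below $c(\bar\kappa)$ within time $t_0$. The resulting path $v$ lies in the mountain-pass class at $\bar\kappa$ and satisfies $v([0,1])\subset\{\S_{\bar\kappa}<c(\bar\kappa)\}\cup\mathcal{U}$. The main obstacle is keeping the period of the deforming curve bounded along the flow so that Palais--Smale can be invoked; this is ensured by combining the period bound on $B_\lambda$ from Step~1 with the upper bound on $\S_\lambda\circ u_\lambda$.
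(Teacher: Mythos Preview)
Your proposal is correct and follows essentially the same Struwe-monotonicity strategy as the paper's proof: exploit (\ref{linmod}) to get a period bound on the near-maximum band of a near-optimal path at a nearby energy, then push the path below level $c(\bar\kappa)$ by a truncated negative-gradient flow, using Lemma \ref{gon1} to exclude $T\to 0$ and Lemma \ref{gon2} for local Palais--Smale; in the $Q$ case the orbit-cylinder segment is prepended (the paper does this after flowing rather than before, but the cut-off freezes it either way). The only point handled more explicitly in the paper is the persistence of boundedness \emph{along} the flow: besides the action-level cut-off, the paper multiplies $-\nabla\S_{\bar\kappa}$ by a function making the vector field globally bounded on $\mathcal{M}$, so that the time-$1$ flow maps the bounded set $\{T\le M+2,\ \S_{\bar\kappa}\le c(\bar\kappa)+(M+1)\epsilon_h\}$ into another bounded set where the Palais--Smale estimate applies---your final sentence correctly names this obstacle, but the two ingredients you cite only bound the \emph{initial} data, not its image under the flow.
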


\begin{proof} 
Let $(\kappa_h)\subset I$ be a strictly decreasing sequence which converges to $\bar\kappa$, and set $\epsilon_h:=\kappa_h-\bar \kappa\downarrow0$. We pick $u_h$ in $\mathcal{P}_{n_0}$ (resp.\ in $\mathcal{Q}_n (\kappa_h)$) such that
\[
\max_{\sigma\in [0,1]}\S_{\kappa_h}\bigl( u_h(\sigma) \bigr) \leq c(\kappa_h)+\epsilon_h.
\]
Let $\gamma=(x,T)\in u_h([0,1])$ be such that $\S_{\bar \kappa}(\gamma)>c(\bar \kappa)-\epsilon_h$. Since $\bar\kappa$ satisfies (\ref{linmod}), we have
\[
T = \frac{\S_{\kappa_h}(\gamma)-\S_{\bar \kappa}(\gamma)}{\kappa_h-\bar\kappa} \leq \frac{c(\kappa_h)+\epsilon_h-c(\bar\kappa)+\epsilon_h}{\epsilon_h}\leq M+2.
\]
Moreover,
\[
\S_{\bar \kappa}(\gamma) \leq \S_{\kappa_h}(\gamma) \leq c(\kappa_h)+\epsilon_h \leq c(\bar \kappa)+(M+1)\epsilon_h.
\]
By the above considerations,
\begin{equation}
\label{dove1}
u_h\bigl( [0,1]\bigr) \subset \mathcal{A}_h \cup\big\{\S_{\bar \kappa}\leq c(\bar \kappa)-\epsilon_h\big\},
\end{equation}
where
\[
\mathcal{A}_h := \big\{(x,T) \in \mathcal{M} \,\big|\, T\leq M+2\mbox{ and }\S_{\bar \kappa} (x,T) \leq c(\bar \kappa)+(M+1)\epsilon_h\big\}.
\]
The estimate
\[
\S_{\bar\kappa}(x,T)  = \frac{1}{2T} \int_{\T} |\dot{x}(s)|^2\, ds + \int_{\T} x^*(\theta) + \bar\kappa T \geq \frac{1}{2T} \|\dot{x}\|_{L^2}^2 - \|\theta\|_{\infty} \|\dot{x}\|_{L^2}
\]
implies that the set $\mathcal{A}_h$ is bounded in $\mathcal{M}$, uniformly in $h$. Set
\[
\begin{split}
A_0 &:= \S_{\bar\kappa}(\alpha_{\kappa_*}^{n_0}) \quad \mbox{in the case }\kappa_*\in P, \\
A_0&:= \S_{\bar\kappa}(\tilde\alpha_{\bar\kappa}^{n}) \quad \mbox{in the case }\kappa_*\in Q.
\end{split}
\]
By  Lemmata \ref{crita} and \ref{critb} we can find numbers $A_1,A_2,A_3,A_4$ such that
\[
A_0 < A_1< A_2< c(\bar\kappa) < A_3 < A_4 <0. 
\]
In the case $\kappa_*\in P$ we have  $u_h(0)=\alpha_{\kappa_*}^{n_0}$ and $u_h(1)=\mu^{n_0}$, so Lemma \ref{crita} implies that
\begin{equation}
\label{estr1}
\S_{\bar{\kappa}}(u_h(0)) = A_0 < A_1, \quad \S_{\bar{\kappa}}(u_h(1)) < A_0 < A_1.
\end{equation}
In the case $\kappa_*\in Q$ the left-hand point of $u_h$ is instead $\tilde{\alpha}_{\kappa_h}^n$, but the continuity of $\kappa \rightarrow \tilde{\alpha}_{\kappa}^n$ guarantees the existence of $\eta>0$ such that
\begin{equation}
\label{dopo}
\S_{\bar\kappa}(\tilde{\alpha}_{\kappa}) < A_1 \qquad \forall \kappa\in (\bar\kappa - \eta,\bar\kappa+\eta),
\end{equation}
so also in this case
\begin{equation}
\label{estr2}
\S_{\bar{\kappa}}(u_h(0)) < A_1, \quad \S_{\bar{\kappa}}(u_h(1)) < A_0 < A_1,
\end{equation}
for $h$ large enough.

Let $\phi$ be the flow of the vector field obtained by multiplying $-\nabla\S_{\bar \kappa}$ by a suitable non-negative function, whose role is to make the vector field bounded on $\mathcal{M}$, vanishing on 
\[
\{ \S_{\bar\kappa} \leq A_1\} \cup \{ \S_{\bar\kappa} \geq A_4\},
\]
while keeping the uniform decrease condition
\begin{equation}
\label{ud}
\frac{d}{dr} \S_{\bar\kappa} \bigl(\phi_r(z)\bigr) \leq - \min \bigl\{
\|d\S_{\bar\kappa} (\phi_{r}(z)) \|^2,1 \bigr\}, \quad \mbox{if } A_2 \leq  \S_{\bar\kappa} (\phi_{r}(z))  \leq A_3.
\end{equation}
Then $\phi$ is well-defined on $[0,+\infty[\times \mathcal{M}$: the only source of non-completeness is $T$ going to 0, which by Lemma \ref{gon1} happens only for negative-gradient flow lines for which the action tends to 0; but we have made the vector field vanish near level 0. Since $\S_{\bar\kappa}$ decreases along the flow lines and $\phi$ maps bounded sets into bounded sets, we have from (\ref{dove1})
\begin{equation}
\label{qua}
\phi([0,1]\times u_h([0,1])) \subset \mathcal{B}_h \cup \bigl\{\S_{\bar \kappa}\leq c(\bar \kappa)-\epsilon_h \bigr\},
\end{equation}
for some uniformly bounded set
\begin{equation}
\label{bdd}
\mathcal{B}_h\subset \bigl\{\S_{\bar \kappa}\leq c(\bar \kappa)+(M+1)\epsilon_h \bigr\}.
\end{equation}
Let $\mathcal{B}\subset \mathcal{M}$ be a bounded closed set which contains $\mathcal{B}_h$ for every $h\in \N$. Since the Palais-Smale condition holds on bounded sets (Lemma \ref{gon2}), the set 
\[
K:= \mathcal{B}\cap \mathrm{crit}\, \S_{\bar\kappa} \cap\{ \S_{\bar\kappa} = c(\bar\kappa)\}
\]
is compact. The open set $\mathcal{U}$ is in particular a neighborhood of $K$.
Since $K$ consists of fixed points of the flow $\phi$, we can find an open neighoborhood $\mathcal{V}\subset \mathcal{U}$ of $K$ such that 
\begin{equation}
\label{disc}
\phi([0,1]\times \mathcal{V}) \subset \mathcal{U}.
\end{equation}
Since $\S_{\bar\kappa}$ satisfies the Palais-Smale condition on $\mathcal{B}$, we can find $\epsilon>0$ and $0<\delta\leq 1$ such that
\begin{equation}
\label{ps}
\|d\S_{\bar\kappa}(\gamma)\| \geq \delta \qquad \forall \gamma \in (\mathcal{B} \setminus \mathcal{V}) \cap \{c(\bar\kappa) - \epsilon \leq \S_{\bar\kappa} \leq c(\bar\kappa) + \epsilon\}.
\end{equation}
By (\ref{qua}) and (\ref{bdd}), for every $(r,\sigma)\in [0,1]\times [0,1]$ and every $h\in \N$ there holds 
\begin{equation}
\label{dove}
\phi_r(u_h(\sigma)) \subset  \bigl( \mathcal{B} \cap \{ \S_{\bar\kappa} \leq c(\bar\kappa) + (M+1)\epsilon_h \} \bigr) \cup \{\S_{\bar\kappa}<c(\bar\kappa)\}.
\end{equation}
Let $\sigma\in [0,1]$ be such that 
\begin{equation}
\label{viola}
\S_{\bar\kappa}(\phi_1(u_h(\sigma)))\geq c(\bar\kappa) \quad \mbox{and} \quad \phi_1(u_h(\sigma)) \notin \mathcal{U}.
\end{equation}
By (\ref{disc}), $\phi_r(u_h(\sigma))$ cannot belong to $\mathcal{V}$ for any $r\in [0,1]$, and together with (\ref{dove}) and the fact that $\S_{\bar\kappa}$ decreases along the orbits of $\phi$, we obtain
\[
\phi([0,1]\times \{u_h(\sigma)\}) \subset (\mathcal{B}\setminus \mathcal{V}) \cap \{ c(\bar\kappa) \leq \S_{\bar\kappa} \leq  c(\bar\kappa) + (M+1)\epsilon_h\}.
\]
If $h$ is so large that $(M+1)\epsilon_h \leq \epsilon$, (\ref{ps}) implies that 
\[
d\S_{\bar\kappa} (\phi_r(u_h(\sigma))) \geq \delta \qquad \forall r\in [0,1],
\]
so by (\ref{ud})
\[
\begin{split}
c(\bar\kappa) \leq \S_{\bar\kappa}(\phi_1(u_h(\sigma))) &= \S_{\bar\kappa}(\phi_0(u_h(\sigma))) + \int_0^1 \frac{d}{dr} \S_{\bar\kappa}(\phi_r(u_h(\sigma))) \, dr \\ &\leq c(\bar\kappa) + (M+1)\epsilon_h - \delta^2.
\end{split}
\]
The above inequality implies that $\epsilon_h \geq \delta^2/(M+1)$. Since $(\epsilon_h)$ is infinitesimal, this gives us an upper bound on $h$: when $h$ is larger than this upper bound, then there cannot be any $\sigma\in [0,1]$ such that (\ref{viola}) holds. For such a large $h$ we must hence have
\begin{equation}
\label{scopo}
\phi_1(u_h (\sigma)) \in \{\S_{\bar\kappa} < c(\bar\kappa)\} \cup \mathcal{U} \qquad \forall \sigma \in [0,1].
\end{equation}

Let $h$ be a large natural number, so that (\ref{scopo}) holds. In the case $\kappa_*\in Q$ we also require $h$ to be so large that $\kappa_h < \bar\kappa + \eta$, where $\eta$ is the positive constant appearing in (\ref{dopo}). In the case $\kappa_*\in P$ we define
\[
v := \phi_1 \circ u_h,
\]
and we observe that, since the flow $\phi$ fixes the points in $\{\S_{\bar\kappa}\leq A_1\}$, (\ref{estr1}) implies that $v$ belongs to $\mathcal{P}_n$.

In the case $\kappa_*\in Q$ the end points of the path $u_h\in \mathcal{Q}_n(\kappa_h)$ are also fixed by the flow $\phi$ because of (\ref{estr2}), and we obtain an element of $\mathcal{Q}_n(\bar\kappa)$ by setting
\[
v(\sigma) := \left\{ \begin{array}{ll} \tilde{\alpha}_{\bar\kappa + 2 \sigma(\kappa_h - \bar\kappa)} & \mbox{for } 0\leq \sigma \leq 1/2, \\ \phi_1(u_h(2\sigma-1)) & \mbox{for } 1/2< \sigma \leq 1. \end{array} \right.
\]
Since for $\kappa\in [\bar\kappa, \kappa_h]$ 
\[
\S_{\bar{\kappa}} (\tilde{\alpha}_{\kappa}) < A_1 < c(\bar\kappa)
\]
by (\ref{dopo}), we conclude that in both cases
\[
v([0,1]) \subset \{\S_{\bar\kappa} < c(\bar\kappa)\} \cup \mathcal{U}.
\]
\end{proof} 

\section{The proof of the Theorem}

Lemma \ref{mono} has the following consequence, where we incorporate a result about the nature of the set of critical points found by a mountain pass minimax which, under the assumption that the Palais-Smale condition holds, is due to H.~Hofer (see \cite{hof85} and \cite{hof86}). 

\begin{Lemma} 
\label{mpass}
Let $\kappa_* \in P$ (resp.\ $\kappa_*\in Q$). Let $n_0$ and $I$ be as in Lemma \ref{crita} (resp. \ref{critb}). Then for almost every $\bar{\kappa}$ in $I$ the number $c := p_{n_0} (\bar\kappa)$ (resp.\ $c:=q_n(\bar\kappa)$ for any $n \geq n_0$) is a critical value of $\S_{\bar\kappa}$. Furthermore, every open neighborhood $\mathcal{U}$ of 
\[
\mathrm{crit}\, \S_{\bar\kappa} \cap \{\S_{\bar\kappa} = c\} 
\]
has a connected component $\mathcal{U}_0$ such that the set
\[
\mathcal{U}_0 \cap \{\S_{\bar\kappa} < c\}
\]
is non-empty and not connected. In particular, there is a critical point $\beta$ of $\S_{\bar\kappa}$ of action $\S_{\bar\kappa}(\beta)=c$ which is not a strict local minimizer. If such a $\beta$ is transversally non-degenerate then it has Morse index 1.
\end{Lemma}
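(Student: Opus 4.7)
The plan is to combine Struwe's monotonicity device (Lemma \ref{mono}), a Hofer-type topological analysis of the mountain pass critical set (compare \cite{hof85,hof86}), and a Morse--Bott local model near a transversally non-degenerate critical circle. I write $c(\kappa):=p_{n_0}(\kappa)$ in the case $\kappa_*\in P$, and $c(\kappa):=q_n(\kappa)$ for some fixed $n\geq n_0$ in the case $\kappa_*\in Q$. Since $c(\cdot)$ is monotonically non-decreasing on the open interval $I$, it is differentiable -- and in particular satisfies the linear right-continuity condition \eqref{linmod} -- at almost every $\bar\kappa\in I$, so Lemma \ref{mono} is available at such $\bar\kappa$. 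Fix such a $\bar\kappa$, abbreviate $c:=c(\bar\kappa)$ and $K_c:=\mathrm{crit}\,\S_{\bar\kappa}\cap\{\S_{\bar\kappa}=c\}$, and recall from Lemmas \ref{crita} and \ref{critb} that both endpoints of any path in $\mathcal{P}_{n_0}$ (resp.\ in $\mathcal{Q}_n(\bar\kappa)$) have $\S_{\bar\kappa}$-action strictly less than $c$.

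The topological statement about $K_c$ is the heart of the argument, and it implies in particular that $c$ is a critical value: if $K_c=\emptyset$, applying Lemma \ref{mono} with $\mathcal{U}=\emptyset$ produces a minimax path entirely in $\{\S_{\bar\kappa}<c\}$, contradicting the definition of $c$. For the general property, suppose for contradiction that some open neighborhood $\mathcal{U}$ of $K_c$ has every component $\mathcal{U}_0$ with $\mathcal{U}_0\cap\{\S_{\bar\kappa}<c\}$ empty or connected. Lemma \ref{mono} yields a minimax path $v$ with $v([0,1])\subset\{\S_{\bar\kappa}<c\}\cup\mathcal{U}$. The open subset $v^{-1}(\mathcal{U})\subset[0,1]$ decomposes into connected components $(a_j,b_j)$, each mapped into a single component $\mathcal{U}_0^{(j)}$ of $\mathcal{U}$. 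By maximality $v(a_j)\notin\mathcal{U}$, so $v(a_j)\in\{\S_{\bar\kappa}<c\}$, and continuity of $\S_{\bar\kappa}$ places $v(a_j+\varepsilon),v(b_j-\varepsilon)$ in $\mathcal{U}_0^{(j)}\cap\{\S_{\bar\kappa}<c\}$ for small $\varepsilon>0$; the contradiction hypothesis then forces this intersection to be non-empty and path-connected (being an open connected subset of the Hilbert manifold $\mathcal{M}$). Replacing $v|_{[a_j+\varepsilon,b_j-\varepsilon]}$ by a path in $\mathcal{U}_0^{(j)}\cap\{\S_{\bar\kappa}<c\}$ joining its endpoints, and noting that the small collars remain in $\{\S_{\bar\kappa}<c\}$ by continuity, produces a modified path $\tilde v$ in the same minimax class with image in $\{\S_{\bar\kappa}<c\}$, contradicting the definition of $c$.

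Next I would extract the critical point $\beta$ and identify its Morse index. If every element of $K_c$ were a strict local minimizer, Lemma \ref{strict} would furnish, for each critical orbit $\T\cdot\gamma$, a neighborhood $\mathcal{U}_\gamma$ with $\mathcal{U}_\gamma\cap\{\S_{\bar\kappa}<c\}=\emptyset$; their union would be an open neighborhood of $K_c$ with empty sub-level in every component, contradicting the topological property. So I select $\beta$ as a critical orbit lying in a bad component $\mathcal{U}_0$ of a sufficiently small $\mathcal{U}$. Assuming now that $\beta$ is transversally non-degenerate and shrinking $\mathcal{U}$ further, I may arrange $\mathcal{U}_0$ to be contained in a tubular neighborhood of $\T\cdot\beta$ on which the Morse--Bott normal form $\S_{\bar\kappa}=c+\|x^+\|^2-\|x^-\|^2$ holds, with $\dim E^-=i(\beta)$. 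Hence $\mathcal{U}_0\cap\{\S_{\bar\kappa}<c\}$ is homotopy equivalent to $\T\times(E^-\setminus\{0\})$: empty if $i(\beta)=0$, disconnected with two components if $i(\beta)=1$, and connected if $i(\beta)\geq 2$. Since the topological property forces it to be non-empty and disconnected, $i(\beta)=1$.

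The most delicate point is the path modification: if $v$ enters $\mathcal{U}$ in infinitely many intervals $(a_j,b_j)$, the modified $\tilde v$ must still be continuous. This can be handled by a preliminary shrinking of $\mathcal{U}$ so that, thanks to compactness of the image $v([0,1])$ and disjointness of the components of $\mathcal{U}$, only finitely many components are visited by $v$ (after discarding those lying far from $v([0,1])$), and by then carrying out all replacements with a single sufficiently small $\varepsilon>0$ so that the collars patch together continuously.
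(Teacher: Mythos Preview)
Your approach follows the paper's closely: Struwe monotonicity via Lemma \ref{mono}, a Hofer-style contradiction argument to detect disconnected sublevels, and the Morse--Bott local model to pin down Morse index $1$. The critical-value step, the strict-local-minimizer step, and the index computation are all correct and essentially identical to the paper's.

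The genuine gap is your treatment of the finiteness issue in the path modification. Your proposed fix --- shrinking $\mathcal{U}$ so that $v$ meets only finitely many of its components --- does not work. First, the contradiction hypothesis (``every component of $\mathcal{U}$ has empty or connected sublevel'') is a property of the \emph{given} $\mathcal{U}$; if you pass to a smaller $\mathcal{U}'\subset\mathcal{U}$ and re-apply Lemma \ref{mono} to obtain a new $v$, the sublevel sets in the components of $\mathcal{U}'$ may be disconnected even when they were connected in the larger $\mathcal{U}$, so you lose precisely the hypothesis you need for the replacement. Second, compactness of $v([0,1])$ does not force a continuous image of $[0,1]$ to meet only finitely many components of an open set, however that set is shrunk; components of an open set need not be at positive distance from one another.

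The paper resolves this differently and without touching $\mathcal{U}$: one does not modify $v$ on every component of $U:=v^{-1}(\mathcal{U})$, only on those where $\sup \S_{\bar\kappa}\circ v \geq c$. Since $\S_{\bar\kappa}\circ v<c$ on the compact set $[0,1]\setminus U$, uniform continuity of $\S_{\bar\kappa}\circ v$ on $[0,1]$ yields a $\delta>0$ such that any component of $U$ of length less than $\delta$ already satisfies $\S_{\bar\kappa}\circ v<c$ throughout. Hence only finitely many components $U_1,\dots,U_k$ need treatment; on each one picks a closed subinterval $[a_j,b_j]\subset U_j$ so that $\max_{[0,1]\setminus\bigcup(a_j,b_j)} \S_{\bar\kappa}\circ v<c$, and since $v(a_j),v(b_j)$ lie in the same component of $\mathcal{U}$ and in $\{\S_{\bar\kappa}<c\}$, the connectedness hypothesis supplies the replacing arcs. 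Substituting this argument for your last paragraph makes the proof complete.
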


\begin{proof}
Let $J\subset I$ be the set of points at which $p_{n_0}$ is differentiable (resp. $q_n$ is differentiable for every $n\geq n_0$). Since $p_{n_0}$ (resp.\ $q_n$) is a monotone function, $J$ has full measure in $I$ (in the case $\kappa_* \in Q$ we also use the fact that a countable intersection of sets of full measure has full measure).

Let $\bar\kappa\in J$. Then $c = p_{n_0} (\bar\kappa)$ (resp.\ $c=q_n(\bar\kappa)$ for some $n \geq n_0$) is a critical value of $\S_{\bar\kappa}$: if not we can take $\mathcal{U}=\emptyset$ in Lemma \ref{mono} and we find $v$ in $\mathcal{P}_{n_0}$ (resp.\ $\mathcal{Q}_n(\bar\kappa)$) such that
\[
v([0,1]) \subset \{\S_{\bar\kappa} < c\},
\]
thus contradicting the definition of $c=p_n(\bar\kappa)$ (resp.\ $c=q_n(\bar\kappa)$).

Now let $\mathcal{U}$ be an open neighborhood of 
\[
\mathrm{crit}\, \S_{\bar\kappa} \cap \{\S_{\bar\kappa} = c\} 
\]
and assume by contradiction that for each connected component $\mathcal{U}_0$ of $\mathcal{U}$ the open set 
\[
\mathcal{U}_0 \cap \{\S_{\bar\kappa} < c\}
\]
is either empty or connected. By Lemma \ref{mono} we can find $v$ in $\mathcal{P}_{n_0}$ (resp.\ $\mathcal{Q}_n(\bar\kappa)$) such that
\[
v([0,1]) \subset \{\S_{\bar\kappa} < c\} \cup \mathcal{U}.
\]
Consider the open set 
\[
U:= v^{-1}(\mathcal{U}) \subset [0,1].
\]
Since $\S_{\bar\kappa} \circ v<c$ on the compact set $I\setminus U$, we 
have 
\[
\max_{\sigma\in I\setminus U}   \S_{\bar\kappa} \circ v < c.
\]
Since the function $\S_{\bar\kappa} \circ v$ is uniformly continuous on $[0,1]$, we can find a positive number $\delta$ such that if $U_0$ is a connected component of $U$ with length less than $\delta$ then $\S_{\bar\kappa} \circ v < c$ on $U_0$. Therefore, there are at most finitely many connected components $U_1,U_2,\dots,U_k$ of $U$ where the supremum of $\S_{\bar\kappa} \circ v$ is at least $c$. Then we can find numbers
\[
0 < a_1<b_1 < a_2<b_2 < \dots < a_k<b_k <1
\]
such that
\[
[a_j,b_j] \subset U_j \quad \forall j=1,2,\dots,k
\]
and
\[
\max_{[0,1] \setminus \bigcup_{j=1}^k (a_j,b_j)} \S_{\bar\kappa} \circ v < c.
\]
Since $v(a_j)$ and $v(b_j)$ belong to the same connected component of $\mathcal{U}$, our assumption implies that there exists a continuous path $v_j: [a_j,b_j] \rightarrow \mathcal{M}$ such that $v_j(a_j)=v(a_j)$, $v_j(b_j)=v(b_j)$, and 
\[
v_j([a_j,b_j]) \subset \mathcal{U} \cap \{\S_{\bar\kappa} < c\}.
\]
Therefore the path
\[
w(\sigma) := \left\{ \begin{array}{ll} v(\sigma) & \mbox{for } \sigma\in [0,1] \setminus  \bigcup_{j=1}^k (a_j,b_j) \\ v_j(\sigma) & \mbox{for } \sigma\in [a_j,b_j], \; j=1,2,\dots,k, \end{array} \right.
\]
is in $\mathcal{P}_{n_0}$ (resp.\ $\mathcal{Q}_n(\bar\kappa)$) and satifies
\[
\max_{[0,1]} \S_{\bar\kappa} \circ w < c.
\]
This contradicts the definition of $c=p_n(\bar\kappa)$ (resp.\ $c=q_n(\bar\kappa)$) and implies that there is at least one component of $\mathcal{U}$ whose intersection with $\{\S_{\bar\kappa}<c\}$ is non-empty and not connected.

In particular, $\mathrm{crit} \, \S_{\bar\kappa} \cap \{\S_{\bar\kappa}=c\}$ contains a point $\beta$ which not a strict local minimizer: if this set consists of strict local minimizers, then it has an open neighborhood $\mathcal{U}$ such that $\mathcal{U} \cap \{\S_{\bar\kappa}<c\}$ is empty.
 
When $\beta$ is transversally non-degenerate, the connected component of $\mathrm{crit}\, \S_{\bar\kappa}\cap \{\S_{\bar\kappa} = c\}$ containing $\beta$ is of the form $\T\cdot \beta$, and the Morse-Bott Lemma implies that this component consists of critical points of Morse index 1: Morse-Bott components of index zero have neighborhoods $\mathcal{U}_0$ such that $\mathcal{U}_0 \cap \{\S_{\bar\kappa} < c\}$ is empty, while Morse-Bott components of index larger than one have neighborhoods $\mathcal{U}_0$ such that $\mathcal{U}_0 \cap \{\S_{\bar\kappa} < c\}$ is connected.
\end{proof}

\begin{Remark}
\label{stability}
Since the Morse index of $\beta$ is one if it is transversally non-degenerate,  Proposition \ref{elliptic-odd_hyp} implies that the eigenvalues $\lambda,1/\lambda$ of the linearized Poincar\'e map $P$ of  the closed magnetic geodesic $\beta$ are either on the unit circle (that is, $\beta$ is non-hyperbolic), or in $(-\infty,0)\setminus \{-1\}$ ($\beta$ is odd hyperbolic).
\end{Remark} 

Let $R\subset (0,c_u)$ be the set of values $\kappa$ for which the energy level $E^{-1}(\kappa)$ is non-degenerate. Notice that
\[
R \subset Q \subset P \subset (0,c_u).
\]
The above lemma, together with known results, has the following consequence: 

\begin{Lemma}
\label{main}
Let $\kappa$ be an energy level in $(0,c_u)$. Then:
\begin{enumerate}
\item if $\kappa\in P^c=(0,c_u) \setminus P$ then there are infinitely many closed magnetic geodesics of energy $\kappa$;
\item every $\kappa\in P$ is contained in an open interval $I(\kappa)\subset (0,c_u)$ in which there is a set $I_0(\kappa)$ of full measure such that for every $\kappa_0\in I_0(\kappa)$ there are at least three closed magnetic geodesics of energy $\kappa_0$;
\item every $\kappa\in Q$ is contained in an open interval $J(\kappa)\subset (0,c_u)$ in which there is a set $J_0(\kappa)$ of full measure such that for every $\kappa_0\in J_0(\kappa)\cap R$ there are infinitely many closed magnetic geodesics of energy $\kappa_0$.
\end{enumerate}
\end{Lemma}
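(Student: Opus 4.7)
The plan is to combine the mountain-pass output of Lemma \ref{mpass} with the structural information about local minimizers of $\S_{\kappa}$ from Section \ref{sec:prelim}. Part (i) is essentially immediate: for $\kappa \in P^c$, by definition $\alpha_\kappa$ is a local but not a strict local minimizer of $\S_{\kappa}$, so the remark following the definition of strict local minimizer in Section \ref{sec:prelim} yields a sequence of local minimizers in $(\T\cdot \alpha_\kappa)^c$ converging to $\alpha_\kappa$, giving infinitely many distinct closed magnetic geodesics of energy $\kappa$.

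For part (ii), I would apply Lemma \ref{crita} to $\kappa \in P$ to produce $n_0\in\N$ and an open interval $I(\kappa)\ni \kappa$ on which $p_{n_0}$ is well-defined, monotone and strictly negative. Let $I_0(\kappa)\subset I(\kappa)$ be the intersection of the differentiability points of $p_{n_0}$ with the full-measure set of energies at which Contreras's theorem supplies a closed contractible orbit $\gamma_{\kappa_0}$ of positive $\S_{\kappa_0}$-action; this is a full-measure subset of $I(\kappa)$. Fix $\kappa_0\in I_0(\kappa)$. If $\kappa_0 \in P^c$, part (i) already yields infinitely many closed magnetic geodesics. If $\kappa_0 \in P$, then Lemma \ref{mpass} supplies a critical point $\beta$ of $\S_{\kappa_0}$ with $\S_{\kappa_0}(\beta)=p_{n_0}(\kappa_0)<0$ which is not a strict local minimizer, and I would argue that $\alpha_{\kappa_0}$, $\gamma_{\kappa_0}$ and $\beta$ are geometrically distinct. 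Since $\gamma_{\kappa_0}$ has positive action while every iterate of $\alpha_{\kappa_0}$ or $\beta$ has negative action, $\gamma_{\kappa_0}$ is distinct from the other two. Moreover, the simple orbit $\sigma$ underlying $\alpha_{\kappa_0}$ is itself a strict local minimizer (by a direct converse to Lemma \ref{persistence}: a nearby curve with no-larger action outside the $\T$-orbit of $\sigma$ would produce, via its $k$-th iterate, the same violation for $\alpha_{\kappa_0}=\sigma^k$); Lemma \ref{persistence} applied to $\sigma$ then forces every iterate of $\sigma$ to be a strict local minimizer, so $\beta$ cannot share its underlying simple orbit with $\alpha_{\kappa_0}$.

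For part (iii), I would apply Lemma \ref{critb} to $\kappa\in Q$, obtaining $n_0$ and an open interval $J(\kappa)\ni\kappa$ on which every $q_n$, $n\geq n_0$, is monotone. Let $J_0(\kappa)\subset J(\kappa)$ be the countable intersection over $n\geq n_0$ of the differentiability points of $q_n$; this has full measure. Fix $\kappa_0\in J_0(\kappa)\cap R$. For each $n\geq n_0$, Lemma \ref{mpass} yields a critical point $\beta_n$ with $\S_{\kappa_0}(\beta_n)=q_n(\kappa_0)<0$ which is not a strict local minimizer; since $\kappa_0\in R$ ensures transverse non-degeneracy of every closed orbit at energy $\kappa_0$, Lemma \ref{mpass} further gives $i(\beta_n)=1$. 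By (\ref{menoi}), $q_n(\kappa_0)\to -\infty$, so the $\beta_n$ attain infinitely many distinct action values. Suppose for contradiction that only finitely many closed magnetic geodesics exist at energy $\kappa_0$; then each $\beta_n$ is an iterate of one of finitely many simple orbits, so by the pigeonhole principle some simple orbit $\sigma$ satisfies $\beta_{n_k}=\sigma^{j_k}$ along a subsequence with $j_k\to\infty$. If $\sigma$ were a local minimizer, Lemma \ref{persistence} would force $i(\sigma^{j_k})=0$, contradicting $i(\beta_{n_k})=1$. Hence $i(\sigma)\geq 1$, so by Theorem \ref{index} we have $\widehat{\imath}(\sigma)>0$, and therefore $i(\sigma^{j_k})\to\infty$, again contradicting $i(\beta_{n_k})=1$.

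The central obstacle is the contradiction in part (iii) ruling out the possibility that all the $\beta_n$ are iterates of finitely many simple orbits; the key mechanism is Theorem \ref{index}, which upgrades the pointwise bound $i(\sigma)\geq 1$ to the mean-index bound $\widehat{\imath}(\sigma)>0$ and thereby forces linear growth of Morse indices along iterates. A more routine but still delicate point in part (ii) is the geometric distinctness of the three orbits, where the crucial observation is that the underlying simple orbit of a strict local minimizer is itself a strict local minimizer.
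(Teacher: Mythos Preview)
Your proof is correct and follows the same overall strategy as the paper. Two minor variations are worth noting. In part~(ii) you pass to the prime orbit $\sigma$ underlying $\alpha_{\kappa_0}$ and observe, via an easy converse to Lemma~\ref{persistence}, that $\sigma$ itself must be a strict local minimizer; this rules out $\beta$ being \emph{any} iterate of $\sigma$, whereas the paper only explicitly excludes $\beta$ from being an iterate of $\alpha_{\kappa_0}$---so your argument is slightly more careful on this point. In part~(iii) you reach the contradiction through a case split (if $\sigma$ is a local minimizer use Lemma~\ref{persistence}, otherwise $i(\sigma)\geq 1$ by non-degeneracy and then invoke Theorem~\ref{index}); the paper instead applies Theorem~\ref{index} directly to the transversally non-degenerate index-one orbit $\beta_{\kappa_0,m_h}$ to get $\widehat{\imath}(\beta)>0$, and separately computes $\widehat{\imath}(\beta)=\lim_h i(\beta^{k_h})/k_h=\lim_h 1/k_h=0$, avoiding the case split.
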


\begin{proof}
(i) If $\kappa\in P^c$, then the local minimizer $\alpha_{\kappa}$ is not strict, hence $\S_{\kappa}$ has a sequence of local minimizers in $\mathcal{M} \setminus \T \cdot \alpha_{\kappa}$
which converges to $\alpha_{\kappa}$. In particular, there are infinitely many closed magnetic geodesics of energy $\kappa$, proving (i).

\medskip

(ii)  G.\ Contreras has proved in \cite{con06} that for almost every $\kappa$ in $(0,c_u)$ there exists a closed contractible orbit $\gamma_{\kappa}$ with energy $\kappa$ and positive $\S_{\kappa}$-action (for more general systems on arbitrary compact configuration spaces, of any dimension). In particular, $\gamma_{\kappa}$ is geometrically distinct from $\alpha_{\kappa}$, all of whose iterates have negative $\S_{\kappa}$-action.

By Lemma \ref{mpass}, $\kappa\in P$ is contained in an open interval $I(\kappa)\subset (0,c_u)$ such that for almost every $\kappa_0\in I(\kappa)$ the functional $\S_{\kappa_0}$ has a critical point $\beta$ with action $\S_{\kappa_0}(\beta) = p_{n_0}(\kappa_0)<0$ which is not a strict local minimizer. In the case $\kappa_0\in P^c$, there are infinitely many closed magnetic geodesics of energy $\kappa_0$ by (i). In the case $\kappa_0\notin P$, the closed magnetic geodesic $\beta$ cannot coincide with an iterate of $\alpha_{\kappa_0}$, because all such iterates are strict local minimizers. Moreover, it cannot coincide with an iterate of $\gamma_{\kappa_0}$ because these iterates have positive $\S_{\kappa_0}$-action. 
Therefore for almost every $\kappa_0\in I(\kappa)$ there are at least three geometrically distinct closed magnetic geodesics of energy $\kappa_0$.

\medskip

(iii) By Lemma \ref{mpass}, $\kappa\in Q$ is contained in an open interval $J(\kappa)\subset (0,c_u)$ which has a subset $J_0(\kappa)$ of full measure with the following property: for every $\kappa_0\in J_0(\kappa)\cap R$ and every $n\geq n_0$ the functional $\S_{\kappa_0}$ has a critical point  $\beta_{\kappa_0,n}$ of action $\S_{\kappa_0}(\beta_{\kappa_0,n}) = q_n(\kappa_0)$ and Morse index $i(\beta_{\kappa_0,n})=1$. We claim that the closed magnetic geodesics $\beta_{\kappa_0,n}$, $n\geq n_0$, cannot be the iterates of only finitely many closed magnetic geodesics. Indeed, if by contradiction this is the case, we can find a closed magnetic geodesic $\beta$ and a sequence of integers $(m_h)$, $m_h \geq n_0$ and $m_h \rightarrow +\infty$, such that $\beta_{\kappa_0,m_h}$ is the iterate $\beta^{k_h}$, for some sequence of integers $k_h\geq 1$. Since the sequence
\[
q_{m_h}(\kappa_0) = \S_{\kappa_0} ( \beta_{\kappa_0,m_h}) = k_h \,\S_{\kappa_0} (\beta)
\]
tends to $-\infty$ (see (\ref{menoi})), the sequence $(k_h)$ must diverge to $+\infty$. 
Therefore, $\beta$ has mean index zero:
\[
\widehat{\imath}(\beta) = \lim_{h\rightarrow \infty} \frac{i(\beta^{k_h})}{k_h} = \lim_{h\rightarrow \infty} \frac{i(\beta_{\kappa_0,m_h})}{k_h} = \lim_{h\rightarrow \infty} \frac{1}{k_h} = 0.
\]
On the other hand, Theorem \ref{index} implies that $\beta_{\kappa_0,h}$ has positive mean index, and hence
\[
\widehat{\imath}(\beta) = \frac{1}{k_h} \widehat{\imath} (\beta_{\kappa_0,m_h}) > 0.
\]
This contradiction proves that the set $\set{\beta_{\kappa_0,n}}{n\geq n_0}$ consists of infinitely many geometrically distinct closed magnetic geodesics of energy $\kappa_0$. 
\end{proof}

\begin{proof}[Proof of the Theorem.] 
The Theorem stated in the Introduction follows from Lemma \ref{main} purely by set- and measure-theoretic arguments. Indeed, for every $\kappa\in P$ let $I(\kappa)$ and $I_0(\kappa)$ be as in Lemma \ref{main} (ii). Since the topology of $\R$ admits a countable basis, there exists an at most countable subset $P_0$ of $P$ such that
\begin{equation}
\label{incl}
P \subset \bigcup_{\kappa\in P_0} I(\kappa).
\end{equation}
By Lemma \ref{main} (i) and (ii), the energy level $E^{-1}(\kappa)$ admits at least three magnetic geodesics whenever $\kappa$ belongs to the set
\[
K_3:= P^c \cup \bigcup_{k\in P} I_0(\kappa),
\]
where $P^c$ denotes the complement of $P$ in $(0,c_u)$. We must show that the above set has full measure in $(0,c_u)$, that is that its complement
\[
P \cap \Bigl( \bigcup_{k\in P} I_0(\kappa) \Bigr)^c
\]
has measure zero. By (\ref{incl}), the above set is contained in
\[
\Bigl(\bigcup_{\kappa\in P_0} I(\kappa)\Bigr) \cap  \Bigl( \bigcup_{k\in P} I_0(\kappa) \Big)^c,
\]
which is clearly contained in
\[
\bigcup_{\kappa\in P_0} \bigl( I(\kappa) \setminus I_0(\kappa) \bigr).
\]
The above set has measure zero, being an at most countable union of sets with measure zero. This proves that $K_3$ has full measure in $(0,c_u)$.

Now let $J(\kappa)$ and $J_0(\kappa)$ be the sets given by Lemma \ref{main} (iii), and let $Q_0$ be an at most countable subset of $Q$ such that
\begin{equation}
\label{incl2}
R\subset  Q \subset \bigcup_{\kappa\in Q_0} J(\kappa).
\end{equation}
By Lemma \ref{main} (iii)  the energy level $E^{-1}(\kappa)$ admits infinitely many magnetic geodesics whenever $\kappa$ belongs to the set
\[
K_{\infty}:=  \bigcup_{k\in Q} \bigl( J_0(\kappa)\cap R\bigr) \subset R.
\]
We must prove that $K_{\infty}$ has full measure in $R$, that is that $R\setminus K_{\infty}$ has measure zero. We have the inclusion
\[
R \setminus K_{\infty} =  R \setminus \bigcup_{k\in Q} \bigl( J_0(\kappa)\cap R\bigr)  \subset  R \setminus \bigcup_{k\in Q_0}  J_0(\kappa),
\]
from which, together with (\ref{incl2}), we obtain
\[
R \setminus K_{\infty} \subset \Bigl( \bigcup_{\kappa\in Q_0} J(\kappa) \Bigr) \setminus \Bigl( \bigcup_{k\in Q_0}  J_0(\kappa) \Bigr) \subset \bigcup_{\kappa\in Q_0} \bigl( J(\kappa) \setminus J_0(\kappa) \bigr).
\]
Therefore, $R\setminus K_{\infty}$ has measure zero. 

We conclude that the set
\[
K:= K_3 \cap (R^c \cup K_{\infty}) \subset (0,c_u)
\]
satisfies the requirements of the Theorem. Indeed, $K$ has full measure in $(0,c_u)$ because $K_3^c$ and 
\[
(R^c \cup K_{\infty})^c = R \cap K_{\infty}^c = R \setminus K_{\infty}
\]
have measure zero. Being a subset of $K_3$, $K$ consists of energy levels for which there at least three closed magnetic geodesics. From the inclusion
\[
R \cap K \subset R \cap (R^c \cup K_{\infty}) = R \cap K_{\infty} = K_{\infty} 
\]
it follows that for every energy level in $R\cap K$ there are infinitely many closed magnetic geodesics.
\end{proof}


\begin{thebibliography}{CIPP00}

\bibitem[Abb01]{abb01}
A.~Abbondandolo.
\newblock {\em Morse theory for {H}amiltonian systems}, volume 425 of {\em
  Pitman Research Notes in Mathematics}.
\newblock Chapman \& Hall, London, 2001.

\bibitem[AM78]{am78}
R.~Abraham and J.~E. Marsden.
\newblock {\em Foundations of mechanics}.
\newblock Advanced Book Program, Reading, Mass. Benjamin/Cummings Publishing
  Co., Inc., 1978.
  
\bibitem[Arn61]{arn61b}
V.~I. Arnold.
\newblock Some remarks on flows of line elements and frames.
\newblock {\em Dokl. Akad. Nauk. SSSR}, 138:255--257, 1961.

\bibitem[Ban80]{ban80}
V.~Bangert.
\newblock Closed geodesics on complete surfaces.
\newblock {\em Math. Ann.}, 251:83--96, 1980.

\bibitem[BK83]{bk83}
V.~Bangert and W.~Klingenberg.
\newblock Homology generated by iterated closed geodesics.
\newblock {\em Topology}, 22:379--388, 1983.

\bibitem[BL10]{bl10}
V.~Bangert and Y.~Long.
\newblock The existence of two closed geodesics on every {F}insler 2-sphere.
\newblock {\em Math. Ann.}, 346:335--366, 2010.

\bibitem[BN93]{bn93}
H.~Brezis and L.~Nirenberg.
\newblock {$H^1$ versus $C^1$ local minimizers}.
\newblock {\em C. R. Acad. Sci. Paris S\'er. I Math.}, 317:465--472, 1993.

\bibitem[Bot56]{bot56}
R.~Bott.
\newblock On the iteration of closed geodesics and the {S}turm intersection
  theory.
\newblock {\em Comm. Pure Appl. Math.}, 9:171--206, 1956.

\bibitem[Cha94]{cha94}
K.~C. Chang.
\newblock {$H^1$ versus $C^1$ isolated critical points}.
\newblock {\em C. R. Acad. Sci. Paris S\'er. I Math.}, 319:441--446, 1994.

\bibitem[CIPP00]{cipp00}
G.~Contreras, R.~Iturriaga, G.~P. Paternain, and M.~Paternain.
\newblock The {P}alais-{S}male condition and {M}a\~n\'e's critical values.
\newblock {\em Ann. Henri Poincar\'e}, 1:655--684, 2000.

\bibitem[CMP04]{cmp04}
G.~Contreras, L.~Macarini, and G.~P. Paternain.
\newblock Periodic orbits for exact magnetic flows on surfaces.
\newblock {\em Internat. Math. Res. Notices}, 8:361--387, 2004.

\bibitem[Con06]{con06}
G.~Contreras.
\newblock The {P}alais-{S}male condition on contact type energy levels for
  convex {L}agrangian systems.
\newblock {\em Calc. Var. Partial Differential Equations}, 27:321--395, 2006.

\bibitem[CZ84]{cz84}
C.~Conley and E.~Zehnder.
\newblock {M}orse-type index theory for flows and periodic solutions for
  {H}amiltonian equations.
\newblock {\em Comm. Pure Appl. Math.}, 37:207--253, 1984.

\bibitem[Hed32]{H} G.A. Hedlund. \newblock Geodesics on a two-dimensional Riemannian manifold with periodic coefficients. 
\newblock {\em Ann. of Math. (2),} 33:719--739, 1932.

\bibitem[Hof85]{hof85}
H.~Hofer.
\newblock A geometric description of the neighborhood of a critical point given
  by the mountain-pass theorem.
\newblock {\em J. London Math. Soc. (2)}, 31:566--570, 1985.

\bibitem[Hof86]{hof86}
H.~Hofer.
\newblock The topological degree at a critical point of mountain-pass type.
\newblock In {\em Nonlinear functional analysis and its applications, Part 1
  (Berkeley, Calif., 1983)}, volume 45, Part 1 of {\em Proc. Sympos. Pure
  Math.}, pages 501--509, Providence, RI, 1986. Amer. Math. Soc.
  
\bibitem[HZ94]{hz94}
 H.~Hofer and E.~Zehnder.
\newblock Symplectic Invariants and Hamiltonian Dynamics.
\newblock Birkh\"auser Advanced Texts, Basel, 1994.

\bibitem[Kat73]{kat73}
A.~Katok.
\newblock Ergodic perturbations of degenerate integrable {H}amiltonian systems.
\newblock {\em Izv. Akad. Nauk SSSR Ser. Mat.}, 37:539--576, 1973.

\bibitem[KH95]{KH} A. Katok and B. Hasselblatt.
\newblock Introduction to the modern theory of dynamical systems. With a supplementary chapter by Katok and Leonardo Mendoza. 
\newblock Encyclopedia of Mathematics and its Applications, 54. Cambridge University Press, Cambridge, 1995.

\bibitem[Lon99]{lon99}
Y.~Long.
\newblock Bott formula of the {M}aslov-type index theory.
\newblock {\em Pacific J. Math.}, 187:113--149, 1999.

\bibitem[Lon02]{lon02}
Y.~Long.
\newblock {\em Index theory for symplectic paths with applications}.
\newblock Birkh\"auser, Basel, 2002.

\bibitem[Maz11]{maz11}
M.~Mazzucchelli.
\newblock {\em Critical point theory for {L}agrangian systems}, volume 293 of
  {\em Progress in Mathematics}.
\newblock Birkh\"auser, 2011.

\bibitem[Mir06]{mir06}
J.~A.~G. Miranda.
\newblock Generic properties for magnetic flows on surfaces.
\newblock {\em Nonlinearity}, 19:1849--1874, 2006.

\bibitem[Mir07]{mir07}
J.~A.~G. Miranda.
\newblock Positive topological entropy for magnetic flows on surfaces.
\newblock {\em Nonlinearity}, 20:2007--2031, 2007.

\bibitem[MP11]{mp11}
W.~J. Merry and G.~P. Paternain.
\newblock Index computations in {R}abinowitz {F}loer homology.
\newblock {\em J. fixed point theory appl.}, 10:87--111, 2011.

\bibitem[PP97]{pp97}
G.~P. Paternain and M.~Paternain.
\newblock Critical values of autonomous {L}agrangian systems.
\newblock {\em Comment. Math. Helv.}, 72(3):481--499, 1997.

\bibitem[Str90]{str90}
M.~Struwe.
\newblock Existence of periodic solutions of {H}amiltonian systems on almost
  every energy surface.
\newblock {\em Bol. Soc. Bras. Mat.}, 20:49--58, 1990.

\bibitem[Syc08]{syc08}
M.~A. Sychev.
\newblock Local minimizers of one-dimensional variational problems and obstacle
  problems.
\newblock {\em C. R. Acad. Sci. Paris S\'er. I Math.}, 346:1213--1218, 2008.

\bibitem[Tai92a]{tai92b}
I.~A. Taimanov.
\newblock Closed extremals on two-dimensional manifolds.
\newblock {\em Russian Math. Surveys}, 47:163--211, 1992.

\bibitem[Tai92b]{tai92c}
I.~A. Taimanov.
\newblock Closed non self-intersecting extremals of multivalued functionals.
\newblock {\em Siberian Math. J.}, 33:686--692, 1992.

\bibitem[Tai92c]{tai92}
I.~A. Taimanov.
\newblock Non self-intersecting closed extremals of multivalued or
  not-everywhere-positive functionals.
\newblock {\em Math. USSR-Izv.}, 38:359--374, 1992.

\bibitem[Ust99]{ust99}
I.~Ustilovsky.
\newblock Contact homology and contact structures on $S^{4m+1}$.
\newblock Ph.D. thesis, Stanford University, 1999.

\end{thebibliography}

\end{document}